\newtheorem{thm}{Theorem}[section]
\newtheorem{thmInt}{Theorem}[section]
\newaliascnt{prop}{thm}
\newtheorem{prop}[prop]{Proposition}
\newaliascnt{lem}{thm}
\newtheorem{lem}[lem]{Lemma}
\newaliascnt{cor}{thm}
\newtheorem{cor}[cor]{Corollary}
\theoremstyle{definition}
\newaliascnt{definition}{thm}
\newtheorem{definition}[definition]{Definition}
\newaliascnt{remark}{thm}
\newtheorem{remark}[remark]{Remark}
\newaliascnt{ex}{thm}
\newtheorem{ex}[ex]{Example}
\numberwithin{equation}{section}
\newcommand{\ZZ}{\mathbb{Z}}
\newcommand{\iso}{\cong}
\newcommand{\farg}{-} 
\newcommand{\id}{\mathrm{id}}
\newcommand{\st}{:} 
\newcommand{\comp}{\circ} 
\newcommand{\mor}[1]{\xrightarrow{#1}}
\newcommand{\mono}{\hookrightarrow} 
\newcommand{\epi}{\twoheadrightarrow} 
\newcommand{\isomor}{\mor{\sim}} 
\newcommand{\rest}[1]{|_{#1}} 
\newcommand{\K}{\Bbbk} 
\newcommand{\Der}{\mathrm{Der}}
\newcommand{\coDer}{\mathrm{coDer}}
\newcommand{\md}{\omega} 
\newcommand{\cat}[1]{{\mathbf{#1}}} 
\newcommand{\Ob}[1]{\mathrm{Ob}(#1)} 
\newcommand{\dgMor}[1]{\underline{\mathrm{Mor}}(#1)} 
\newcommand{\pob}[1]{P(#1)} 
\newcommand{\fun}[1]{\mathsf{#1}} 
\newcommand{\Ho}[1]{\mathrm{Ho}(#1)} 
\newcommand{\red}[1]{\overline{#1}} 
\newcommand{\aug}[1]{#1^+} 
\newcommand{\B}{\mathrm{B}} 
\newcommand{\Ba}{\aug{\B}} 
\newcommand{\Bi}{\B_{\infty}} 
\newcommand{\Bia}{\aug{\Bi}} 
\newcommand{\coB}{\Omega} 
\newcommand{\coBa}{\aug{\coB}} 
\newcommand{\comult}[1]{\Delta_{#1}} 
\newcommand{\strmor}[1]{\eta_{#1}} 
\newcommand{\augmor}[1]{\epsilon_{#1}} 
\newcommand{\adjpair}[4]{#1\colon#2\rightleftarrows#3:\!#4}
\newcommand{\sh}[2][1]{#2[#1]} 
\newcommand{\Ta}[1]{\mathrm{T}(#1)} 
\newcommand{\Tc}[1]{\mathrm{T}^c(#1)} 
\newcommand{\Tar}[1]{\red{\mathrm{T}}(#1)} 
\newcommand{\Tcr}[1]{\red{\mathrm{T}}^c(#1)} 
\newcommand{\tc}{\tau} 
\newcommand{\utc}[1]{\tc_{#1}} 
\newcommand{\Vid}[1]{J_{#1}} 
\newcommand{\htpn}{\sim} 
\newcommand{\hison}{\approx} 
\newcommand{\quot}[1]{/#1} 
\newcommand{\loccl}[1]{[#1]} 
\newcommand{\m}[2]{m^{#1}_{#2}} 
\newcommand{\IHom}{\mathbb{R}\underline{Hom}} 
\newcommand{\gr}[1]{\mathrm{gr}(#1)} 
\newcommand{\cA}{\cat{A}}
\newcommand{\cB}{\cat{B}}
\newcommand{\cC}{\cat{C}}
\newcommand{\cV}{\cat{V}}
\newcommand{\dgMod}[1]{#1\text{-}\cat{dgMod}} 
\newcommand{\dgAlg}{\cat{dgAlg}} 
\newcommand{\dgAlgn}{\cat{dgAlg^n}} 
\newcommand{\dgcoAlgn}{\cat{dgcoAlg^n}} 
\newcommand{\Cat}{\cat{Cat}} 
\newcommand{\coCat}{\cat{coCat}} 
\newcommand{\Catn}{\cat{Cat^n}} 
\newcommand{\coCatn}{\cat{coCat^n}} 
\newcommand{\Cata}{\cat{Cat^a}} 
\newcommand{\coCata}{\cat{coCat^a}} 
\newcommand{\gcoCat}{\cat{gcoCat}} 
\newcommand{\gCatn}{\cat{gCat^n}} 
\newcommand{\gcoCatn}{\cat{gcoCat^n}} 
\newcommand{\gcoCata}{\cat{gcoCat^a}} 
\newcommand{\dgCat}{\cat{dgCat}} 
\newcommand{\dgcoCat}{\cat{dgcoCat}} 
\newcommand{\dgCatn}{\cat{dgCat^n}} 
\newcommand{\dgcoCatn}{\cat{dgcoCat^n}} 
\newcommand{\dgCata}{\cat{dgCat^a}} 
\newcommand{\dgCatc}{\cat{dgCat^c}} 
\newcommand{\dgcoCata}{\cat{dgcoCat^a}} 
\newcommand{\Qu}{\cat{Qu}} 
\newcommand{\Qua}{\cat{Qu^a}} 
\newcommand{\gQu}{\cat{gQu}} 
\newcommand{\gQua}{\cat{gQu^a}} 
\newcommand{\dgQu}{\cat{dgQu}} 
\newcommand{\dgQua}{\cat{dgQu^a}} 
\newcommand{\ACat}{\cat{A_\infty Cat}} 
\newcommand{\ACatn}{\cat{A_\infty Cat^n}} 
\newcommand{\ACata}{\cat{A_\infty Cat^a}} 
\newcommand{\ACatc}{\cat{A_\infty Cat^c}} 
\newcommand{\ACatcu}{\cat{A_\infty Cat^c_u}} 
\newcommand{\trivquiv}[1]{\cat{I}_{#1}} 
\newcommand{\trivcat}[1]{\cat{E}_{#1}} 
\newcommand{\Hqe}{\Ho{\dgCat}} 
\newcommand{\HoACat}{\Ho{\ACat}} 
\newcommand{\HoACatc}{\Ho{\ACatc}} 
\newcommand{\HoACatcu}{\Ho{\ACatcu}} 
\newcommand{\QACat}{\ACat\quot{\hison}} 
\newcommand{\QACatc}{\ACatc\quot{\hison}} 
\newcommand{\ACatdg}{\cat{A_\infty Cat_{dg}}} 
\newcommand{\HoACatdg}{\Ho{\ACatdg}} 
\newcommand{\QACatdg}{\ACatdg\quot{\hison}} 
\newcommand{\FunA}{\cat{Fun}_{\ACat}}
\newcommand{\FunAc}{\cat{Fun}_{\ACatc}}
\newcommand{\FunAn}{\cat{Fun}_{\ACatn}}
\newcommand{\fF}{\fun{F}}
\newcommand{\fG}{\fun{G}}
\newcommand{\fH}{\fun{H}}
\newcommand{\fI}{\fun{I}}
\newcommand{\fJ}{\fun{J}}
\newcommand{\fK}{\fun{K}}
\newcommand{\fM}{\fun{M}}
\newcommand{\fN}{\fun{N}}
\newcommand{\fQ}{\fun{Q}}
\newcommand{\fR}{\fun{R}}
\newcommand{\fS}{\fun{S}}
\newcommand{\fT}{\fun{T}}
\newcommand{\fU}{\fun{U}}
\newcommand{\fdgA}{\fI} 
\newcommand{\fdgAn}{\fdgA^{\fun{n}}} 
\newcommand{\VdB}{\fU} 
\newcommand{\VdBn}{\VdB^{\fun{n}}} 
\newcommand{\fucu}{\fJ} 
\newcommand{\fcuu}{\fK} 
\newcommand{\fhcuu}{\fcuu'} 
\newcommand{\finc}{\fI} 
\newcommand{\fpro}{\fQ} 
\newcommand{\dgmf}[1]{\tilde{#1}} 
\newcommand{\rcf}[1]{\fR_{#1}} 
\newcommand{\ncb}[1][]{\alpha_{#1}} 
\newcommand{\nbc}[1][]{\beta_{#1}} 
\newcommand{\ncbi}[1][]{\gamma_{#1}} 
\newcommand{\ntV}[1][]{\rho_{#1}} 
\newcommand{\nfV}[1][]{\sigma_{#1}} 
\newcommand{\nhtV}[1][]{\Ho{\rho}_{#1}} 
\newcommand{\nhfV}[1][]{\Ho{\sigma}_{#1}} 
\newcommand{\Nat}{\mathrm{Nat}} 
\begin{document}

	\title[Localizations of the category of $A_\infty$ categories and internal Homs]{Localizations of the category of $A_\infty$ categories and internal Homs}

	\author[A.~Canonaco]{Alberto Canonaco}
	\address{\parbox{0.9\textwidth}{Universit{\`a} degli Studi di Pavia\\[1pt]
			Dipartimento di Matematica ``F. Casorati''\\
			Via Ferrata 5, 27100 Pavia, Italy
			\vspace{4pt}
	}}
	\email{alberto.canonaco@unipv.it \vspace{0,1cm}}

    \author[M.~Ornaghi]{Mattia Ornaghi}
   \address{\parbox{0.9\textwidth}{Ben Gurion University\\[1pt]
   Department of Mathematics\\[1pt]
   Be'er Sheva 84105, Israel
   		\vspace{4pt}
   		}}
   \email{mattia12.ornaghi@gmail.com}
   \urladdr{\url{https://sites.google.com/view/mattiaornaghi}\vspace{0,1cm}}
    
    \author[P.~Stellari]{Paolo Stellari}
    \address{\parbox{0.9\textwidth}{Universit\`a degli Studi di Milano\\[1pt]
    		Dipartimento di Matematica ``F.~Enriques''\\[1pt]
    		Via Cesare Saldini 50, 20133 Milano, Italy
    		\vspace{4pt}
    }}
    \email{paolo.stellari@unimi.it}
    \urladdr{\url{http://sites.unimi.it/stellari}}
	
	\thanks{A.~C.~ was partially supported by the national research project
	  ``Moduli spaces and Lie theory'' (PRIN 2015).
	P.~S.~ was partially supported by the ERC Consolidator Grant ERC-2017-CoG-771507-StabCondEn and by the research projects FIRB 2012 ``Moduli Spaces and Their Applications'' and PRIN 2015 ``Geometria delle variet\`a proiettive''.}

	\keywords{Dg categories, $A_\infty$ categories}

	\subjclass[2010]{18D20, 18E35, 18G55, 57T30}

\begin{abstract}
We prove that the localizations of the categories of dg categories, of cohomologically unital and strictly unital $A_\infty$ categories with respect to the corresponding classes of quasi-equivalences are all equivalent. Moreover we show that the last two localizations are equivalent to the corresponding quotients by the relation of being isomorphic in the cohomology of the $A_\infty$ category of $A_\infty$ functors. As an application we give a complete proof of a claim by Kontsevich stating that the category of internal Homs for two dg categories can be described as the category of strictly unital $A_\infty$ functors between them.
\end{abstract}

\maketitle

\setcounter{tocdepth}{1}
\tableofcontents

\section*{Introduction}

The category $\dgCat$ of differential graded (dg from now on) categories defined over a field $\K$ is widely studied. Roughly speaking, a dg category is a category whose space of morphism is actually a complex.  Due to the work of Tabuada \cite{Ta}, $\dgCat$ has a model structure which allows to give a very nice description of the localization $\Hqe$ of such a category by the class of quasi-equivalences. Actually, the category $\Hqe$ provides the correct framework to look for dg enhancements of triangulated categories and dg lifts of exact functors between triangulated categories (see \cite{CSSurvey} for an overview on the subject).

Let us look closely at triangulated categories of algebro-geometric nature: the unbounded derived category of quasi-coherent sheaves, the bounded derived category of coherent sheaves and the category of perfect complexes on an algebraic stack. They all possess natural dg enhancements. But other geometric categories arising from symplectic geometry come with a slightly more general kind of enhancements: the $A_\infty$ ones. This is the case of the Fukaya category which is related to lagrangian submanifolds of a smooth symplectic manifold. The interplay between triangulated categories of algebro-geometric or symplectic type is highly non-trivial and at the very forefront of modern geometry, as predicted by the celebrated homological version of the Mirror Symmetry Conjecture due to Kontsevich \cite{Ko}.

This pushes the attention to $A_\infty$ categories and functors which, in this paper, will always be meant to be defined over a field. Strictly speaking an $A_\infty$ category is not a category as the composition is associative only up to higher multiplication maps, contrary to the case of dg categories.

In addition to this, $A_\infty$ categories have various possible incarnations. Indeed, one can talk about strictly unital or cohomologically unital $A_\infty$ categories. While the former have identity morphisms and are thus closer to being categories, only the cohomology categories of the latter have identities and are thus categories in a proper sense. To fix the notation, we denote by $\ACat$ (resp.\ $\ACatc$) the category of strictly (resp.\ cohomologically) unital $A_\infty$ categories, whose morphisms are strictly (resp.\ cohomologically) unital $A_\infty$ functors.

It is important to keep in mind that we need to distinguish between these different types of categories not for pure abstraction but due to geometry. Indeed, while the dg categories enhancing algebro-geometric categories are strictly unital, the Fukaya category is by no means strictly unital in a natural way but it is cohomologically unital.

Thus, one can either take the localization $\HoACat$ of $\ACat$ with respect to strictly unital quasi-equivalences or the localization $\HoACatc$ of $\ACatc$ with respect to quasi-equivalences. We can go further and consider the quotients of $\ACatc$ and $\ACat$ by another crucial equivalence relation. Indeed, if $\cA_1$ and $\cA_2$ are cohomologically or strictly unital $A_\infty$ categories one can form the $A_\infty$ category $\FunAc(\cA_1,\cA_2)$ which will be carefully defined in \autoref{subsect:Ainftyfunctors}. Roughly, its objects are the cohomologically unital $A_\infty$ functors between $\cA_1$ and $\cA_2$.  We say that two cohomologically unital or strictly unital $A_\infty$ functors $\fF_1,\fF_2\colon\cA_1\to\cA_2$ are equivalent $\fF_1\hison\fF_2$ if they are isomorphic in the $0$-th cohomology of $\FunAc(\cA_1,\cA_2)$. Thus we can take the quotients $\QACat$ and $\QACatc$ of $\ACat$ and $\ACatc$ with respect to this relation.

Our first main result is then the following.

\begin{thmInt}\label{thm:localizations}
The  faithful but not full natural inclusions $\dgCat\mono\ACat\mono\ACatc$ induce equivalences
\[
\Hqe\iso\HoACat\iso\HoACatc.
\]
Moreover, these categories are equivalent to $\QACatc$ and $\QACat$.
\end{thmInt}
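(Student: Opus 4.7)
The plan is to produce quasi-inverses to the two functors $\Hqe \to \HoACat \to \HoACatc$ induced by the inclusions, and then to identify each of $\HoACat$ and $\HoACatc$ with the corresponding quotient by $\hison$.

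First I would check that both inclusions $\dgCat \mono \ACat \mono \ACatc$ send quasi-equivalences to quasi-equivalences (immediate from the definitions), so they descend to functors between the localizations. To invert $\HoACat \to \HoACatc$, I would introduce a strictification construction which assigns to a cohomologically unital $A_\infty$ category $\cA$ a quasi-equivalent strictly unital one, obtained by enlarging the underlying graded quiver with a strict unit and transferring the higher multiplications via a homotopy-perturbation/retract argument; the same procedure, applied to $A_\infty$ functors, produces a strictly unital functor $\hison$-equivalent to any cohomologically unital one. For the equivalence $\Hqe \iso \HoACat$ I would use a Yoneda-type rectification $\cA \mapsto \dgmf{\cA}$, realizing any strictly unital $A_\infty$ category as a full dg subcategory of its dg category of $A_\infty$ modules, and check functoriality of this assignment up to quasi-equivalence.

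For the comparison with the quotients, I would first show that $\hison$ is preserved under pre- and post-composition with quasi-equivalences; this uses the biiunctoriality of $\FunAc(\farg,\farg)$ together with the fact that a quasi-equivalence $\cB_1 \to \cB_2$ induces a quasi-equivalence $\FunAc(\cA,\cB_1) \to \FunAc(\cA,\cB_2)$ (and analogously in the first variable). Consequently the localization functors factor as $\ACat \to \QACat \to \HoACat$ and $\ACatc \to \QACatc \to \HoACatc$. Essential surjectivity of the right-hand functors is automatic; for fully faithfulness, one has to show that two $A_\infty$ functors which become equal in the localization are already $\hison$-equivalent. Via the rectification results of the previous paragraph this reduces to the strictly unital case, where I would exploit a path object built from $\FunAc$ to convert any zigzag of quasi-equivalences between functors into a single $\hison$-equivalence.

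The step I expect to be the main obstacle is exactly this last comparison: producing and controlling a path object inside $\FunAc$ (or a suitable variant) that witnesses the internal Hom in $\Hqe$, so that the abstract localization relation collapses onto the concrete relation $\hison$. Organising the strictifications of objects and morphisms so that they interact coherently with $\hison$, and doing so functorially enough that the induced maps on $\QACat$ and $\QACatc$ are genuine inverses (and not merely pointwise inverses), is the technical heart of the argument, and is also precisely what is needed for the application to Kontsevich's description of internal Homs mentioned in the abstract.
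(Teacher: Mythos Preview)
Your outline is in the right spirit, but it differs from the paper in one place and has a genuine gap in another.

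For $\Hqe\iso\HoACat$ the paper does \emph{not} use a Yoneda rectification. Instead it builds an honest left adjoint $\VdB$ to the inclusion $\fdgA\colon\dgCat\to\ACat$ out of the bar--cobar composite $\coB\comp\Bi$, suitably quotiented to make it strictly unital (\autoref{dgAadj}). The unit and counit of this adjunction are componentwise quasi-isomorphisms, and this immediately yields the equivalence of localizations. The advantage over your Yoneda approach is exactly the point you flag: $\VdB$ is a genuine functor with a genuine natural transformation $\id\to\fdgA\comp\VdB$, so no ``functoriality up to quasi-equivalence'' bookkeeping is needed.

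For the quotient comparison there is a gap. First, your stated reason for the factorization $\ACat\to\QACat\to\HoACat$ is not sufficient: knowing that $\hison$ is stable under composition with quasi-equivalences does \emph{not} show that $\fF\hison\fF'$ implies $\loccl{\fF}=\loccl{\fF'}$ in the localization. The paper proves this (after reducing to dg targets) via a concrete path-object argument using $\pob{\cA_2}\subset\dgMor{\cA_2}$ (\autoref{Genovese}). Second, and more seriously, you only discuss faithfulness of $\QACat\to\HoACat$; you never address fullness. A morphism in $\HoACat$ is represented by a zigzag $\cA_1\leftarrow\cA'\to\cA_2$, so to get it from a single $A_\infty$ functor $\cA_1\to\cA_2$ you must invert the backward quasi-equivalence up to $\hison$. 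The key non-trivial input here is that every quasi-equivalence in $\ACat$ admits a $\hison$-inverse (this is \cite[Th\'eor\`eme~9.2.0.4]{LH}, or equivalently Seidel's result that $A_\infty$ quasi-equivalences have quasi-inverses). With that fact in hand, the universal property of localization gives a functor $\HoACat\to\QACat$, and the two functors are then visibly mutual inverses; your plan to instead ``convert zigzags into a single $\hison$-equivalence'' directly is trying to reprove both halves simultaneously and is substantially harder to execute.
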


This result is certainly expected by experts but we could not find a proof in the existing literature which may work in the generality mentioned above. Actually, \autoref{thm:localizations} has various interesting applications. The first easy one is that a triangulated category has a unique dg enhancement if and only if it has unique strictly or cohomologically unital $A_\infty$ ones. The uniqueness of dg enhancements for the algebro-geometric categories mentioned above was conjectured by Bondal, Larsen and Lunts \cite{BLL} for smooth projective varieties. Such a conjecture was proved in a much more general setting by Lunts and Orlov \cite{LO}. In \cite{CS6}, these results were further extended and \autoref{thm:localizations} implies that the same results hold for $A_\infty$ enhancements.

\medskip

Let us now move to a much more elaborate application. It is well-known and not difficult to prove that $\dgCat$ is a closed symmetric monoidal category with respect to the tensor product of dg categories. The important feature is that the tensorization has a right adjoint given by the dg category of dg functors between the two dg categories which we are given.

It is a much deeper and recent result that $\Hqe$ is a closed symmetric monoidal category, again with respect to the tensor product of dg categories and with right adjoint to the tensorization given by the dg category of \emph{internal Homs} between the corresponding dg categories. More precisely, let $\cA_1$, $\cA_2$ and $\cA_3$ be three dg categories. Then the category $\IHom(\cA_2,\cA_3)$ of internal Homs between $\cA_2$ and $\cA_3$, is the unique (up to isomorphism in $\Hqe$) dg category yielding a natural bijection
\[
\xymatrix{
	\Hqe(\cA_1\otimes\cA_2,\cA_3) \ar@{<->}[rr]^-{1:1} & & \Hqe(\cA_1,\IHom(\cA_2,\cA_3)).
}\label{eqn:defIntHoms}\tag{IH}
\]
This result was first proven by To\"en \cite{To} and later reproven in \cite{CS} in a much simpler way.

The point is that, a while before \cite{To}, Kontsevich had a very bright vision of how to prove the existence of the category of internal Homs and of its explicit description. This can be summarized as follows:

\medskip

\noindent{\bf Claim} (Kontsevich){\bf .} \emph{If $\cA_1$ and $\cA_2$ are dg categories, then the dg category $\FunA(\cA_1,\cA_2)$, whose objects are strictly unital $A_\infty$ functors, is the category of internal Homs between $\cA_1$ and $\cA_2$.}

\medskip

There are several pointers in the literature to the above claim (see \cite{Dr,K2,To}). But, quite surprisingly, no correct and complete proof of this very nice statement seems to be available. We will discuss this problem a bit later.

Our second main result fills this gap, by using \autoref{thm:localizations} in a crucial way.

\begin{thmInt}\label{thm:internalHoms}
Let $\cA_1$, $\cA_2$ and $\cA_3$ be three dg categories. Then there exists a natural bijection
\begin{equation*}\label{eqn:ultima}
\xymatrix{
\Hqe(\cA_1\otimes\cA_2,\cA_3) \ar@{<->}[rr]^-{1:1} & & \Hqe(\cA_1,\FunA(\cA_2,\cA_3))
}
\end{equation*}
proving that the symmetric monoidal category $\Hqe$ is closed.	
\end{thmInt}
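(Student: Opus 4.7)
The strategy is to reduce the adjunction to a formal currying/uncurrying at the level of $\ACat$, and then invoke Theorem A to pass between $\Hqe$ and $\QACat$. Since the faithful embedding $\dgCat\mono\ACat$ carries the three dg categories to themselves, Theorem A gives, for any pair of dg categories $\cB$ and $\cC$, a natural bijection $\Hqe(\cB,\cC)\iso\ACat(\cB,\cC)\quot{\hison}$. Moreover, because $\cA_3$ is dg, the $A_\infty$ category $\FunA(\cA_2,\cA_3)$ is actually a genuine dg category, so this identification applies to both sides of the claim. It therefore suffices to establish a natural bijection
\[
\ACat(\cA_1\otimes\cA_2,\cA_3)\quot{\hison}\;\longleftrightarrow\;\ACat(\cA_1,\FunA(\cA_2,\cA_3))\quot{\hison}.
\]

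The first ingredient is the classical curry map $\Phi$ assigning to a strictly unital $A_\infty$ functor $\fF\colon\cA_1\otimes\cA_2\to\cA_3$ the strictly unital $A_\infty$ functor $\Phi(\fF)\colon\cA_1\to\FunA(\cA_2,\cA_3)$ defined on objects by $A_1\mapsto\fF(A_1,\farg)$, with higher Taylor components obtained by a standard reshuffling of those of $\fF$. The verification that $\Phi$ is a set-theoretic bijection, with inverse given by the uncurry construction, and that strict unitality is preserved in both directions, is a direct computation from the explicit $A_\infty$ structure on the tensor product and on $\FunA$.

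The core step is to show that $\Phi$ descends to equivalence classes. To that end, I would upgrade $\Phi$ to a morphism of $A_\infty$ categories
\[
\widetilde{\Phi}\colon\FunAc(\cA_1\otimes\cA_2,\cA_3)\;\longrightarrow\;\FunAc(\cA_1,\FunA(\cA_2,\cA_3))
\]
that extends $\Phi$ on objects and is a quasi-equivalence. Taking $H^0$, a quasi-equivalence induces a bijection on isomorphism classes of the cohomology category, and this gives the desired bijection of $\hison$-classes. Essential surjectivity on $H^0$ is exactly the assertion that every cohomologically unital functor is $\hison$-equivalent to a strictly unital one, which is built into the proof of Theorem A. Combined with the identifications from that theorem, this yields the bijection in $\Hqe$, and naturality in $\cA_1$ delivers the closed symmetric monoidal structure.

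The main technical obstacle is the construction and analysis of $\widetilde{\Phi}$: one must match the differentials and higher multiplications on the two $A_\infty$ categories of functors, starting from the way a pre-natural transformation $\fF_1\to\fF_2$ in $\FunAc(\cA_1\otimes\cA_2,\cA_3)$ curries into one between $\Phi(\fF_1)$ and $\Phi(\fF_2)$. This is the $A_\infty$-level analogue of the classical adjunction between $\Hom(A\otimes B,C)$ and $\Hom(A,\Hom(B,C))$, but encumbered by the signs and higher coherences produced by the $A_\infty$ operations on $\cA_3$ and on $\FunA(\cA_2,\cA_3)$. The heart of the proof is to check that $\widetilde\Phi$ is bijective on each $\Hom$ complex, which, together with the set-level surjectivity on strictly unital objects provided by $\Phi$ and the essential surjectivity of strictly unital functors inside $\FunAc$, delivers the required quasi-equivalence and thus the theorem.
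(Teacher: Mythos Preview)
Your strategy has a genuine gap at the very first step: the ``classical curry map'' $\Phi\colon\ACat(\cA_1\otimes\cA_2,\cA_3)\to\ACat(\cA_1,\FunA(\cA_2,\cA_3))$ is \emph{not} a set-theoretic bijection, and no amount of reshuffling of Taylor components will make it one. Under the bar construction, a strictly unital $A_\infty$ functor $\cA_1\otimes\cA_2\to\cA_3$ corresponds to a dg cofunctor out of $\Bi(\cA_1\otimes\cA_2)=\Tcr{\sh{\cA_1\otimes\cA_2}}$, whereas an $A_\infty$ functor $\cA_1\to\FunA(\cA_2,\cA_3)$ unwinds (via the genuine, strict currying at the multifunctor level, \autoref{prop:allqe}) to a dg cofunctor out of $\red{\aug{\Bi(\cA_1)}\otimes\aug{\Bi(\cA_2)}}$. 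These two dg cocategories are \emph{not} isomorphic: the first is indexed by words in morphisms of $\cA_1\otimes\cA_2$ (pairs $f\otimes g$), the second by independent pairs of words in $\cA_1$ and in $\cA_2$. Concretely, your proposed formula for $\Phi(\fF)^n(f_n,\dots,f_1)^m(g_m,\dots,g_1)$ would have to produce an element from components $\fF^k$ evaluated on $k$ morphisms of $\cA_1\otimes\cA_2$; there is no canonical way to pair $n$ morphisms of $\cA_1$ with $m$ morphisms of $\cA_2$ into such inputs when $n\neq m$, and any ad hoc choice (inserting identities, summing over shuffles) fails to be inverse to an ``uncurry''.

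The paper resolves this by separating the problem into two pieces. The step you are implicitly treating as formal---passing from $\ACatc(\cA_1\otimes\cA_2,\cA_3)$ to $A_\infty$ multifunctors $\ACatc(\cA_1,\cA_2,\cA_3)$---is in fact the entire difficulty (step $(\heartsuit)$ in \eqref{eqn:quadratone}). It is handled not by a bijection but by constructing a specific comparison cofunctor $\fG\colon\red{\aug{\B(\cA_1)}\otimes\aug{\B(\cA_2)}}\to\B(\red{\aug{\cA_1}\otimes\aug{\cA_2}})$ via twisting cochains (\autoref{weakequiv}), showing that $\coB(\fG)$ is only a quasi-isomorphism, and then proving that precomposition with the associated bifunctor $\fN$ is a quasi-equivalence on functor categories (\autoref{mainqeq}, using a derivation/coderivation argument and cofibrancy of cobar). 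Only after this does one obtain a bijection on $\hison$-classes (\autoref{mainbij}), and there is additional work to check that the relevant functors are cohomologically unital. The genuinely easy currying is the passage from multifunctors to $\FunAc(\cA_1,\FunAc(\cA_2,\cA_3))$ (step $(\bigstar)$), which is the isomorphism of \autoref{prop:allqe}. Your proposal conflates $(\heartsuit)$ with $(\bigstar)$ and thereby hides the substantive content of the theorem.
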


As a consequence, $\IHom(\cA_2,\cA_3)$ is isomorphic to $\FunA(\cA_2,\cA_3)$ in $\Hqe$. The advantage of using this description of the category $\IHom(\cA_2,\cA_3)$ is that morphisms in $\Hqe$ can now be described as equivalence classes of strictly unital $A_\infty$ functors and not as roofs of dg functors. We hope that this may be used in some future work to give a simpler proof of the fact that an exact functor between the bounded derived categories of coherent sheaves on smooth projective schemes over a field can be lifted to a morphism in $\Hqe$ if and only if it is of Fourier--Mukai type (see \cite{LS,To}).

\subsection*{Related work}

A version of \autoref{thm:localizations} was proved by Lef\`evre-Hasegawa in \cite{LH} for (non-unital) $A_\infty$ algebras. Actually his result extends to the category of (non-unital) $A_\infty$ categories with the same set of objects, but this is clearly not sufficiently general for our purposes.

Lef\`evre-Hasegawa's proof is based on the observation that, even though the category of $A_\infty$ algebras does not have a model structure, it can be endowed with a `degenerate' model structure without arbitrary limits and colimits. This is related to a true model structure on the category of dg coalgebras which was obtained in \cite{LH} in analogy with a result of Hinich \cite{H} for dg algebras.

When we pass to $\ACat$ and $\ACatc$, the situation is very similar. Indeed, both categories do not have a model structure with arbitrary limits and colimits (see \autoref{subsec:nomodel}). Clearly, one could follow the same strategy as in \cite{LH} to prove \autoref{thm:localizations}. The issue is that Lef\`evre-Hasegawa's result mentioned above is not at hand for dg cocategories. Actually, it seems a non-trivial problem in itself to show that the category of dg cocategories admits an interesting model structure.
To avoid these delicate problems, we proceed by constructing explicit equivalences between the given categories. This has the advantage of providing a very handy control on the various localizations.

As for \autoref{thm:internalHoms}, to the best of our knowledge, the only other paper dealing with Kontsevich's claim is \cite{Fao}, where it appears as Theorem 1.2. Such a result is used in the same paper to prove some interesting properties concerning the mapping spaces of dg categories and the Hochschild cohomology of $A_\infty$ categories (see Theorems 1.3 and Theorem 1.4 in \cite{Fao}). Unfortunately, after a careful analysis, the proof in \cite{Fao} turned out to be a bit too rough and incorrect at some steps, probably due to a wrong use of the notion of augmentation and reduction. More precisely, Definitions A.14 and A.16 in \cite{Fao} look inaccurate to us and, as a consequence, Lemmas A.20 and A.21 of the same paper are clearly wrong. These lemmas are used in Remark A.26 of \cite{Fao} where it is erroneously claimed that a certain functor $\gamma_D$ is a quasi-equivalence. All these results are correct in the augmented but not in the strictly unital case. In the latter setting, the appropriate statement is our \autoref{dgAadj}. Even if we replace Remark A.26 with it, the rest of the proof of Kontsevich's claim in Section 2 of \cite{Fao} is quite hard to follow. The reason being that the author refers (in a slightly vague way) to \cite{LH} without making the necessary upgrade of the results in \cite{LH} from the augmented to the strictly unital setting. We are aware that Faonte is currently working on a revision of \cite{Fao} after we sent him a preliminary version of this paper.

It should also be noted that our proof and the approach by Faonte are different in spirit. Indeed, Faonte proceeds by showing that, given two dg categories $\cA_1$ and $\cA_2$, the dg category $\IHom(\cA_2,\cA_2)$ described by To\"en (see \cite{To,CS}) is quasi-equivalent to $\FunA(\cA_1,\cA_2)$. Our proof consists in showing the existence of the category of internal Homs from scratch by directly proving that $\FunA(\cA_1,\cA_2)$ satisfies the defining property \eqref{eqn:defIntHoms} of the dg category of internal Homs. In other words, we provide yet another proof of the main result in \cite{To} for the special case when the dg categories are defined over a field.

\subsection*{Plan of the paper}

\autoref{sec:introAinfty} is a rather short introduction to $A_\infty$ categories and $A_\infty$ functors providing precise definitions for all the notions mentioned above. We compare the approaches in \cite{BLM} and \cite{Sei} and introduce the notion of $A_\infty$ multifunctor (see \autoref{subsect:Ainftyfunctors}). We recall the bar and cobar constructions in \autoref{subsec:barcobar} as these notions will be fundamental all along the paper. The last argument treated in this part of the paper is the lack of limits (whence of a model structure) for the categories of strictly and cohomologically unital $A_\infty$ categories (see \autoref{subsec:nomodel}).

\autoref{sect:proofthm1} is completely devoted to the proof of \autoref{thm:localizations}. In particular, the latter result is the combination of \autoref{thm:1}, \autoref{thm:2}, \autoref{ucueq} and \autoref{thm:3}. Finally, the proof of \autoref{thm:internalHoms} is contained in \autoref{sect:internalHoms}, where the use of $A_\infty$ multifunctors is crucial.

\subsection*{Notation and conventions}

We assume that a universe containing an infinite set is fixed. Throughout the paper, we will simply call sets the members of this universe. In general the collection of objects of a category need not be a set: we will always specify if we are requiring this extra condition.

We work over a field $\K$. We will always assume that the collection of objects in a $\K$-linear category is a set. The category (whose collection of objects is not a set) of $\K$-linear categories and $\K$-linear functors will be denoted by $\Cat$. We will also use the more general notions of non-unital $\K$-linear categories and functors, which form a category $\Catn$.

The shift by an integer $n$ of a graded or dg object $M$ will be denoted by $\sh[n]{M}$.

\section{Preliminaries on $A_\infty$ categories and functors}\label{sec:introAinfty}

In this section we recall some basic facts about $A_\infty$ categories and (multi)functors. The key results for us are those pointing to the adjunction between the bar and cobar constructions. They form the bulk of this section.

\subsection{$A_\infty$ categories and $A_\infty$ functors}\label{subsect:Ainftycategories}

Let us start with the basic definition of $A_\infty$ categories and its variants depending on the presence of units, at different levels. We will not follow the sign convention in \cite{Sei} but the equivalent one in \cite{LH}. This is motivated by the fact that the latter is automatically compatible with the bar and cobar constructions that we will discuss later in this section.

\begin{definition}\label{def:nonunitalAinftycat}
A \emph{non-unital $A_\infty$ category} $\cA$ consists of a set of objects $\Ob{\cA}$ and, for any $A_1,A_2\in\Ob{\cA}$, a $\ZZ$-graded $\K$-vector space $\cA(A_1,A_2)$ and, for all $i\ge1$, $\K$-linear maps
\[
\m{i}{\cA}\colon\cA(A_{i-1},A_i)\otimes\cdots\otimes\cA(A_{0},A_1)\to\cA(A_0,A_i)
\]
of degree $2-i$. The maps must satisfy the \emph{$A_\infty$ associativity relations}
\begin{equation}\label{eqn:catrel}
\sum_{\substack{0\leq j< n\\1\leq k\leq n-j}}(-1)^{jk+n-j-k}\m{n-k+1}{\cA}(\id^{\otimes j}\m{k}{\cA}\otimes\id^{\otimes n-j-k})=0,
\end{equation}
for every $n\geq 1$.
\end{definition}

The above relation when $n=1$ implies that the pair $(\cA(A_1,A_2), \m{1}{\cA})$ is a complex, for every $A_1,A_2\in\Ob{\cA}$. On the other hand, for $n=2$ we have that $\m{1}{\cA}$ satisfies the (graded) Leibniz rule with respect to the composition defined by $\m{2}{\cA}$. Moreover, for $n=3$ we have that $\m{2}{\cA}$ is associative up to a homotopy defined by $\m{3}{\cA}$. 

In particular, we can then take cohomologies of $\cA(A_1,A_2)$ and define the (graded, non-unital) \emph{cohomology category} $H(\cA)$ of $\cA$ such that $\Ob{H(\cA)}=\Ob{\cA}$ and
\[
H(\cA)(A_1,A_2)=\bigoplus_i H^i(\cA(A_1,A_2)).
\]
The latter graded vector spaces come with the natural induced associative composition
\[
[f_2]\comp[f_1]:=[\m{2}{\cA}(f_2,f_1)].
\]

Even though we call $H(\cA)$ a category, it may not be a category in a strict sense, as $H(\cA)$ may lack identities. For this reason, we introduce the following special classes of $A_\infty$ categories.

\begin{definition}\label{def:cunitalAinftycat}
A \emph{cohomologically unital $A_\infty$ category} is a non-unital $A_\infty$ category $\cA$ such that $H(\cA)$ is a category (i.e.\ $H(\cA)$ is unital).
\end{definition}

The above definition can be made stricter.

\begin{definition}\label{def:unitalAinftycat}
A \emph{strictly unital $A_\infty$ category} is a non-unital $A_\infty$ category $\cA$ such that, for all $A\in\Ob{\cA}$, there exists a degree $0$ morphisms $\id_A\in\cA(A,A)$ such that
\begin{align*}
&\m{1}{\cA}(\id_A)=0,\\
&\m{2}{\cA}(f,\id_A)=\m{2}{\cA}(\id_A,f)=f,\\
&\m{i}{\cA}(f_{i-1},\ldots,f_k,\id_A,f_{k-1},\ldots,f_1)=0,
\end{align*}
for all morphisms $f,f_1,\ldots,f_{i-1}$ and for all $i>2$ and $1\leq k\leq i$.
\end{definition}

Clearly, a strictly unital $A_\infty$ category is cohomologically unital. A non-unital $A_\infty$ category $\cA$ such that $\m{i}{\cA}=0$, for all $i>2$, is called a \emph{non-unital dg category}. In complete analogy, we get \emph{cohomologically unital dg categories} and \emph{strictly unital dg categories}. In accordance to the existing literature, strictly unital dg categories will be simply referred to as \emph{dg categories}.

When the corresponding (dg or $A_\infty$) categories have only one object, then, for obvious reasons, we will talk about (dg or $A_\infty$) algebras.

\begin{ex}\label{trivcat}
For every set $O$ we will denote by $\trivcat{O}$ the dg category whose set of objects is $O$ and whose morphisms are given by
\[
\trivcat{O}(A_1,A_2)=\begin{cases}
\K\,\id_{A_1} & \text{if $A_1=A_2$} \\
0 & \text{otherwise,}\end{cases}
\]
with zero differential and obvious composition.
\end{ex}

\begin{ex}\label{ex:tensor}
Given two (non-unital, cohomologically unital or strictly unital) dg categories $\cA_1$ and $\cA_2$ we can define a (non-unital, cohomologically unital or strictly unital) dg category $\cA_1\otimes\cA_2$ which is the tensor product of $\cA_1$ and $\cA_2$. Its objects are the pairs $(A_1,A_2)$ with $A_i\in\Ob{\cA_i}$ while $\cA_1\otimes\cA_2((A_1,A_2),(B_1,B_2))=\cA_1(A_1,B_1)\otimes\cA_2(A_2,B_3)$. If $\cA_1$ and $\cA_2$ are $A_\infty$ categories, then there is no well behaved and simple notion of tensor product as in the dg case. It would be very useful to fill such a lack.
\end{ex}

\begin{definition}\label{def:Ainftyfunctors}
A \emph{non-unital $A_\infty$ functor} $\fF\colon\cA_1\to\cA_2$ between two non-unital $A_\infty$ categories $\cA_1$ and $\cA_2$ is a collection $\fF=\{\fF^i\}_{i\geq 0}$ such that $\fF^0\colon\Ob{\cA_1}\to\Ob{\cA_2}$ is a map of sets and
\[
\fF^i\colon\cA_1(A_{i-1},A_i)\otimes\cdots\otimes\cA_1(A_{0},A_1)\to\cA_2(\fF^0(A_0),\fF^0(A_i))
\]
are degree $1-i$ maps of graded vectors spaces, for all $A_0,\ldots,A_i\in\Ob{\cA_1}$, satisfying the following relations
\begin{multline}\label{eqn:funrel}
\sum_{\substack{0\leq j< n\\ 1\leq k\leq n-j}}(-1)^{jk+n-j-k}\fF^{n-k+1}(\id^{\otimes j}\otimes\m{k}{\cA_1}\otimes\id^{\otimes n-j-k})\\
=\sum_{\substack{1\leq r\leq n\\s_1+\cdots+s_r=n}}(-1)^{\sum_{2\leq u\leq r}\left((1-s_u)\sum_{1\leq v\leq u}s_v\right)}\m{r}{\cA_2}(\fF^{s_r}\otimes\ldots\otimes\fF^{s_1}),
\end{multline}
for every $n\ge1$. A non-unital $A_\infty$ functor $\fF$ is \emph{strict} if $\fF^i=0$ for every $i>1$.
\end{definition}

The above relation when $n=1$ implies that $\fF^1$ commutes with the differentials $\m{1}{\cA_i}$. On the other hand, for $n=2$ we see that $\fF^1$ preserves the compositions $\m{2}{\cA_i}$, up to a homotopy defined by $\fF^2$. It follows that $\fF^1$ induces a non-unital graded functor
\[
H(\fF)\colon H(\cA_1)\to H(\cA_2).
\]
A non-unital $A_\infty$ functor $\fF$ is a \emph{quasi-isomorphism} if $H(\fF)$ is an isomorphism.

\begin{definition}\label{def:cunitAinftyfunctors}
If $\cA_1$ and $\cA_2$ are cohomologically unital $A_\infty$ categories, a non-unital $A_\infty$ functor $\fF\colon\cA_1\to\cA_2$ is \emph{cohomologically unital} if $H(\fF)$ is unital.
\end{definition}

A cohomologically unital $A_\infty$ functor $\fF$ is a \emph{quasi-equivalence} if $H(\fF)$ is an equivalence.

\begin{definition}\label{def:sunitAinftyfunctors}
If $\cA_1$ and $\cA_2$ are strictly unital $A_\infty$ categories, a non-unital $A_\infty$ functor $\fF\colon\cA_1\to\cA_2$ is \emph{strictly unital} if $\fF^1(\id_A)=\id_{\fF^0(A)}$ and $\fF^i(f_{i-1},\ldots,\id_A,\ldots,f_1)=0$, for all $A\in\Ob{\cA_1}$ and all morphisms $f_1,\ldots,f_{i-1}$.
\end{definition}

Non-unital, cohomologically unital and strictly unital $A_\infty$ functors can be composed in an explicit way that will be made clear later (see \autoref{rmk:Bifun}) producing the same type of $A_\infty$ functors. We denote by $\ACatn$ the category whose objects are non-unital $A_\infty$ categories and whose morphisms are non-unital $A_\infty$ functors. Analogously, we get the categories $\ACatc$ and $\ACat$ whose objects and morphisms are, respectively, cohomologically unital and strictly unital $A_\infty$ categories and functors. Similarly, we denote by $\dgCatn$, $\dgCatc$ and $\dgCat$ the categories whose objects are non-unital, cohomologically unital and strictly unital dg categories with the corresponding strict $A_\infty$ functors (called \emph{dg functors}).

\begin{ex}
A map of sets $f\colon O\to O'$ defines a unique dg functor $\trivcat{f}\colon\trivcat{O}\to\trivcat{O'}$ which coincides with $f$ on objects.
\end{ex}

\begin{definition}
A strictly unital $A_\infty$ category $\cA$ is \emph{augmented} if it is endowed with a strict and strictly unital $A_\infty$ functor $\augmor{\cA}\colon\cA\to\trivcat{\Ob{\cA}}$ which is the identity on objects.
\end{definition}

If $\cA$ is an augmented $A_\infty$ category, its \emph{reduction} is the non-unital $A_\infty$ category $\red{\cA}$ such that $\Ob{\red{\cA}}=\Ob{\cA}$ and $\red{\cA}(A_1,A_2)=\ker(\augmor{\cA}\colon\cA(A_1,A_2)\to\trivcat{\Ob{\cA}}(A_1,A_2))$ (for every $A_1,A_2\in\cA$), with $\m{i}{\red{\cA}}$ induced from $\m{i}{\cA}$ by restriction.

Given a non-unital $A_\infty$ category $\cA$, its \emph{augmentation} is the augmented $A_\infty$ category $\aug{\cA}$ such that $\Ob{\aug{\cA}}=\Ob{\cA}$ and
\[
\aug{\cA}(A_1,A_2)=\begin{cases}
\cA(A_1,A_2) & \text{if $A_1\neq A_2$} \\
\cA(A_1,A_1)\oplus\K\,1_{A_1} & \text{otherwise,}\end{cases}
\]
with $\m{i}{\aug{\cA}}$ the unique extension of $\m{i}{\cA}$ such that the additional morphisms $1_A$ is the unit of $A$ in $\aug{\cA}$, for every $A\in\cA$. Let us stress that, if $\cA$ is strictly unital, the unit in $\cA$ is denoted by $\id_A$ while the one on $\aug{\cA}$ is $1_A$, for every $A\in\cA$. 

Augmented $A_\infty$ categories form a category $\ACata$ with
\[
\ACata(\cA_1,\cA_2):=\{\fF\in\ACat(\cA_1,\cA_2)\st\augmor{A_2}\comp\fF=\trivcat{\fF^0}\comp\augmor{A_1}\}.
\]
It is easy to see that the maps $\cA\mapsto\red{\cA}$ and $\cA\mapsto\aug{\cA}$ extend to functors $\ACata\to\ACatn$ and $\ACatn\to\ACata$, which are quasi-inverse equivalences of categories. Denoting by $\dgCata$ the subcategory of $\ACata$ whose objects are (augmented) dg categories and whose morphisms are (compatible) dg functors, these equivalences clearly restrict to quasi-inverse equivalences between $\dgCata$ and $\dgCatn$.

\subsection{Bar construction and $A_\infty$ multifunctors}\label{subsec:barcobar}

First we need to recall basic facts about (dg) quivers and (dg) cocategories. We refer to \cite{BLM,K1} for extensive presentations.

\begin{definition}
A ($\K$-linear) \emph{quiver} $\cV$ consists of a set of objects $\Ob{\cV}$ and, for every $X,Y\in\cV$, of a $\K$-vector space $\cV(X,Y)$. A morphism of quivers $\fF\colon\cV\to\cV'$ is given by maps $\fF\colon\Ob{\cV}\to\Ob{\cV'}$ and (for every $X,Y\in\cV$) $\fF=\fF_{X,Y}\colon\cV(X,Y)\to\cV'(\fF(X),\fF(Y))$.
\end{definition}

\begin{ex}
For every set $O$ we will denote by $\trivquiv{O}$ the quiver defined as $\trivcat{O}$ in \autoref{trivcat} (forgetting differential and composition).
\end{ex}

Quivers and morphisms of quivers (with the obvious composition) clearly form a category $\Qu$.

\begin{definition}
A quiver $\cV$ is \emph{augmented} if it is endowed with two (structure) morphisms of quivers $\trivquiv{\Ob{\cV}}\to\cV\to\trivquiv{\Ob{\cV}}$ which are the identities on objects and whose composition is the identity. A morphism of augmented quivers is a morphism of quivers which is compatible with the structure morphisms.
\end{definition}

Again, augmented quivers and augmented morphisms form a category $\Qua$. Similarly as before, there are natural reduction and augmentation functors which give quasi-inverse equivalences between $\Qua$ and $\Qu$.

Replacing (in the above definitions) vector spaces with graded (respectively, dg) vector spaces, one gets the notions of (augmented) graded (respectively dg) quivers. The corresponding categories will be denoted by $\gQu$ and $\gQua$ (respectively, $\dgQu$ and $\dgQua$).

\begin{definition}
A ($\K$-linear) \emph{non-unital cocategory} is a quiver $\cC$ endowed with $\K$-linear maps
\[
\comult{\cC}\colon\cC(X,Y)\to\bigoplus_{Z\in\cC}\cC(Z,Y)\otimes\cC(X,Z)
\]
(for every $X,Y\in\cC$) satisfying the natural coassociativity condition.

A non-unital cocategory $\cC$ is \emph{cocomplete} if each morphism of $\cC$ is in the kernel of a sufficiently high iterate of $\comult{\cC}$.
\end{definition}

\begin{ex}
For a quiver $\cV$ the (reduced) tensor cocategory $\Tcr{\cV}$ has $\Ob{\Tcr{\cV}}:=\Ob{\cV}$,
\[
\Tcr{\cV}(X,Y):=\cV(X,Y)\bigoplus_{n>0}\bigoplus_{Z_1,\dots,Z_n\in\cV}\cV(Z_n,Y)\otimes\cV(Z_{n-1},Z_n)\otimes\cdots\otimes\cV(Z_1,Z_2)\otimes\cV(X,Z_1)
\]
(for every $X,Y\in\cV$) and
\[
\comult{\Tcr{\cV}}(f_n,\dots,f_1):=\sum_{i=1}^{n-1}(f_n,\dots,f_{i+1})\otimes(f_i,\dots,f_1)
\]
(for every morphism $(f_n,\dots,f_1)$ of $\Tcr{\cV}$). It is easy to see that $\Tcr{\cV}$ is a cocomplete non-unital cocategory. Dually, we denote by $\Tar{\cV}$ the usual (reduced) tensor category over $\cV$.
\end{ex}

\begin{definition}
A \emph{non-unital cofunctor} $\fF\colon\cC_1\to\cC_2$ between two non-unital cocategories $\cC_1$ and $\cC_2$ is a morphism of quivers such that $\comult{\cC_2}\comp\fF=(\fF\otimes\fF)\comp\comult{\cC_1}$.
\end{definition}

We will denote by $\coCatn$ the category whose objects are non-unital cocomplete cocategories and whose morphisms are non-unital cofunctors. It is not difficult to prove the following result (see \cite[Lemma 5.2]{K1} for the augmented version of the first part or \cite[Lemma 1.1.2.2]{LH} for the case of (co)algebras).

\begin{prop}\label{quivadj}
The forgetful functor $\coCatn\to\Qu$ has right adjoint defined on objects by $\cV\mapsto\Tcr{\cV}$. Dually, the forgetful functor $\Catn\to\Qu$ has left adjoint defined on objects by $\cV\mapsto\Tar{\cV}$.
\end{prop}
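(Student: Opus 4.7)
The plan is to exhibit explicit bijections realizing the two adjunctions and verify naturality, treating the two statements separately.

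For the second (dual) statement, which is classical: given a quiver $\cV$, an object of $\Catn(\Tar{\cV},\cD)$ should correspond bijectively to a quiver morphism $\varphi\colon\cV\to U\cD$. In one direction, I would restrict a functor $\Phi\colon\Tar{\cV}\to\cD$ to the degree-$1$ summand $\cV\subset U\Tar{\cV}$. In the other, given $\varphi$, I would define $\widehat{\varphi}(f_n,\dots,f_1):=\varphi(f_n)\comp\cdots\comp\varphi(f_1)$ on a basic tensor, extended linearly on each Hom space. Functoriality is automatic from the concatenation definition of composition in $\Tar{\cV}$, and the two assignments are clearly inverse because any functor on $\Tar{\cV}$ is determined by its values on length-$1$ tensors. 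Naturality in both variables is straightforward.

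For the first statement, let $p_\cV\colon U\Tcr{\cV}\to\cV$ be the projection onto the degree-$1$ summand, viewed as a morphism of quivers. Given a non-unital cofunctor $F\colon\cC\to\Tcr{\cV}$, set $\pi(F):=p_\cV\comp U(F)$. Conversely, for $\varphi\in\Qu(U\cC,\cV)$, define $\iota(\varphi)\colon\cC\to\Tcr{\cV}$ by
\[
\iota(\varphi)(x)\;:=\;\sum_{n\geq 1}\varphi^{\otimes n}\bigl(\comult{\cC}^{(n-1)}(x)\bigr),
\]
where $\comult{\cC}^{(n-1)}\colon\cC\to\cC^{\otimes n}$ is the iterated comultiplication (with $\comult{\cC}^{(0)}=\id$). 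The cocompleteness hypothesis is exactly what is needed to make this sum finite for every $x$, since some iterate of $\comult{\cC}$ kills $x$.

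The remaining work is to check that these assignments are mutually inverse and natural. That $\iota(\varphi)$ is a cofunctor follows from a diagram chase: coassociativity of $\comult{\cC}$ ensures $\comult{\cC}^{(n-1)}$ is compatible with all ways of splitting a length-$n$ iterated comultiplication into two pieces, and this matches exactly the formula for $\comult{\Tcr{\cV}}$, which splits a tensor at every internal position. The identity $\pi\comp\iota=\id$ is immediate since only the $n=1$ summand of $\iota(\varphi)(x)$ contributes to the degree-$1$ part. I expect the main obstacle to be the converse $\iota\comp\pi=\id$: given a cofunctor $F$, decompose it as $F=\sum_{n\geq 1}F_n$ with $F_n$ landing in the length-$n$ tensor summand; an inductive use of the cofunctor equation $\comult{\Tcr{\cV}}\comp F=(F\otimes F)\comp\comult{\cC}$ should force $F_n=F_1^{\otimes n}\comp\comult{\cC}^{(n-1)}$ for all $n$, and since $F_1=\pi(F)$ this yields $F=\iota(\pi(F))$. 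Naturality in $\cC$ and $\cV$ is then formal, completing the proof of adjointness.
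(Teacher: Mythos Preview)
Your proposal is correct and gives the standard explicit argument for both adjunctions; the paper itself does not supply a proof, merely citing \cite[Lemma~5.2]{K1} and \cite[Lemma~1.1.2.2]{LH}, and your construction (projection to the length-$1$ summand in one direction, the finite sum $\sum_{n\ge1}\varphi^{\otimes n}\comp\comult{\cC}^{(n-1)}$ in the other, with cocompleteness guaranteeing finiteness) is exactly the argument those references contain. The inductive verification that $F_n=F_1^{\otimes n}\comp\comult{\cC}^{(n-1)}$ from the cofunctor identity is the right way to close the loop, so there is no gap.
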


\begin{definition}
Let $\fF_1,\fF_2\in\coCatn(\cC_1,\cC_2)$. A \emph{$(\fF_1,\fF_2)$-coderivation} is a collection of $\K$-linear maps $D\colon\cC_1(C_1,C_2)\to\cC_2(\fF_1(C_1),\fF_2(C_2))$, for every pair of objects $C_1,C_2\in\cC_1$, such that the relation
\[
\comult{\cC_2}\comp D=(\fF_2\otimes D+D\otimes\fF_1)\comp\comult{\cC_1}
\]
holds true.
\end{definition}

We will denote by $\coDer(\fF_1,\fF_2)$ the $\K$-module of $(\fF_1,\fF_2)$-coderivations. Dually, if $\fF_1,\fF_2\in\Catn(\cA_1,\cA_2)$, there is well known notion of $(\fF_1,\fF_2)$-derivation and we will denote by $\Der(\fF_1,\fF_2)$ the $\K$-module of $(\fF_1,\fF_2)$-derivations.

We will need the following result, which is not difficult to prove and  well known to experts (see Lemma 1.1.2.1 and 1.1.2.2 in \cite{LH} for the case of (co)algebras).

\begin{prop}\label{cofree}
Given $\cV\in\Qu$ and $\fF_1,\fF_2\in\coCatn(\cC,\Tcr{\cV})$, there is a natural isomorphism between $\coDer(\fF_1,\fF_2)$ and the $\K$-module consisting of collections of $\K$-linear maps
\[
\cC(C_1,C_2)\to\cV(\fF'_1(C_1),\fF'_2(C_2))
\]
(for every $C_1,C_2\in\cC$), where $\fF'_i\in\Qu(\cC,\cV)$ corresponds to $\fF_i$ under the adjunction of \autoref{quivadj}. Dually, given $\cV\in\Qu$ and $\fF_1,\fF_2\in\Catn(\Tar{\cV},\cA)$, there is a natural isomorphism between $\Der(\fF_1,\fF_2)$ and the $\K$-module consisting of collections of $\K$-linear maps
\[
\cV(V_1,V_2)\to\cA(\fF'_1(V_1),\fF'_2(V_2))
\]
(for every $V_1,V_2\in\cV$), where $\fF'_i\in\Qu(\cV,\cA)$ corresponds to $\fF_i$ under the adjunction of \autoref{quivadj}.
\end{prop}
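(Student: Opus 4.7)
My plan is to exploit the universal property encoded in \autoref{quivadj}: a coderivation into the cofree non-unital cocategory $\Tcr{\cV}$ should be uniquely determined by its projection onto the cogenerating piece $\cV$. Let $\pi_{\cV}\colon\Tcr{\cV}\to\cV$ denote the counit at $\cV$ of that adjunction, i.e.\ the morphism of quivers that projects each $\Tcr{\cV}(X,Y)$ onto its length-$1$ summand $\cV(X,Y)$; by construction, the morphism $\fF'_i\in\Qu(\cC,\cV)$ corresponding to $\fF_i$ under the adjunction is precisely $\fF'_i=\pi_{\cV}\comp\fF_i$. The forward map then sends $D\in\coDer(\fF_1,\fF_2)$ to the collection $\pi_{\cV}\comp D$, which is clearly $\K$-linear and has the prescribed source and target.

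For the inverse, given a collection $d$ with components $d_{C_1,C_2}\colon\cC(C_1,C_2)\to\cV(\fF'_1(C_1),\fF'_2(C_2))$, I define $D\colon\cC\to\Tcr{\cV}$ by specifying, for each $n\geq 1$, its length-$n$ component
\[
D^{(n)}(f):=\sum_{i=1}^{n}\bigl((\fF'_2)^{\otimes(n-i)}\otimes d\otimes(\fF'_1)^{\otimes(i-1)}\bigr)\comp\comult{\cC}^{(n-1)}(f),
\]
where $\comult{\cC}^{(n-1)}\colon\cC\to\cC^{\otimes n}$ denotes the iterated comultiplication (with $\comult{\cC}^{(0)}=\id_{\cC}$), and then set $D(f):=\sum_{n\geq 1}D^{(n)}(f)$. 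Cocompleteness of $\cC$ guarantees that $\comult{\cC}^{(n-1)}(f)$ vanishes for $n$ sufficiently large, so the sum is finite. A direct computation then shows that $D$ satisfies the coderivation relation: expanding $\comult{\Tcr{\cV}}\comp D^{(n)}$ exploits that $\comult{\Tcr{\cV}}$ splits a length-$n$ tensor word at each of its $n-1$ internal positions, and this matches, via coassociativity of $\comult{\cC}$, the expansion of $(\fF_2\otimes D+D\otimes\fF_1)\comp\comult{\cC}(f)$ according to whether the distinguished factor carrying $d$ ends up in the left or in the right tensor slot.

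The two compositions are readily checked: $\pi_{\cV}\comp\Psi(d)$ reproduces $d$ because only the $n=1$ summand survives the projection; conversely, iterating the coderivation identity yields
\[
\comult{\Tcr{\cV}}^{(n-1)}\comp D=\sum_{i=1}^{n}\bigl(\fF_2^{\otimes(n-i)}\otimes D\otimes\fF_1^{\otimes(i-1)}\bigr)\comp\comult{\cC}^{(n-1)},
\]
and composing with $\pi_{\cV}^{\otimes n}$ isolates the length-$n$ component of $D$ as exactly the formula defining $\Psi(\pi_{\cV}\comp D)^{(n)}$. Naturality in the evident variables is immediate from the explicit formula.

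The dual statement for derivations $D\colon\Tar{\cV}\to\cA$ is proved by symmetric arguments: such a derivation is determined by its restriction $d=D\rest{\cV}$ to the generating subquiver, and conversely any $d$ with the appropriate source and target extends to the derivation defined on pure tensors by
\[
D(v_n,\dots,v_1):=\sum_{i=1}^{n}\m{2}{\cA}\bigl(\fF'_2(v_n)\otimes\cdots\otimes\fF'_2(v_{i+1})\otimes d(v_i)\otimes\fF'_1(v_{i-1})\otimes\cdots\otimes\fF'_1(v_1)\bigr),
\]
the derivation identity being immediate from the fact that multiplication in $\Tar{\cV}$ is concatenation. The main obstacle in both halves is purely combinatorial bookkeeping: correctly tracking which tensor position carries $d$ versus $\fF'_1$ or $\fF'_2$, and checking that this matches the two-term expansion coming from $\comult{\cC}$ (respectively the concatenation product in $\Tar{\cV}$). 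Cocompleteness on the one side, and finite length of tensor words on the other, ensure all sums are finite.
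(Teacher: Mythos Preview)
Your argument is correct and is precisely the standard explicit construction one finds in the literature (e.g.\ in \cite{LH}, Lemmas 1.1.2.1--1.1.2.2, for the (co)algebra case). The paper itself omits the proof entirely, declaring the result ``not difficult to prove and well known to experts'' and pointing to \cite{LH}; so there is no paper-proof to compare with, and your explicit verification is exactly what is needed to back up that citation.

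Two minor cosmetic points. First, in the dual formula for derivations you write $\m{2}{\cA}$ applied to an $n$-fold tensor; strictly this should be the iterated multiplication $(\m{2}{\cA})^{(n-1)}$, but the meaning is clear. Second, it is worth stating explicitly (you use it implicitly) that the projection onto the length-$n$ summand of $\Tcr{\cV}$ is recovered as $\pi_{\cV}^{\otimes n}\comp\comult{\Tcr{\cV}}^{(n-1)}$, since that is what makes the injectivity direction go through cleanly.
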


There are also natural notions of unital and augmented cocategories; the corresponding categories will be denoted by $\coCat$ and $\coCata$. As usual, there are reduction and augmentation functors which give quasi-inverse equivalences between $\coCata$ and $\coCatn$. Dually, in the case of categories one can define the category $\Cata$ of ($\K$-linear) augmented categories, which is equivalent to $\Catn$. We also define, for every $\cV\in\Qu$, the tensor cocategory $\Tc{\cV}$ as $\aug{\Tcr{\cV}}\in\coCata$ and the tensor category $\Ta{\cV}$ as $\aug{\Tar{\cV}}\in\Cata$.

Replacing (in the above definitions) vector spaces with graded (respectively, dg) vector spaces, one gets the various notions of graded (respectively dg) cocategories. The corresponding categories will be denoted by $\gcoCatn$, $\gcoCat$ and $\gcoCata$ (respectively, $\dgcoCatn$, $\dgcoCat$ and $\dgcoCata$). Moreover, in the graded (respectively, dg) setting, both $\coDer(\fF_1,\fF_2)$ and $\Der(\fF_1,\fF_2)$ are graded (respectively, dg) $\K$-modules in a natural way. Recall that the differential of $D\in\coDer(\fF_1,\fF_2)$ of degree $n$ is defined as $d_{\cC_2}\comp D-(-1)^nD\comp d_{\cC_1}$, and similarly for derivations. In the following we will freely use the fact that \autoref{quivadj} and \autoref{cofree} admit obvious extensions to the graded setting.

\medskip

Given $\cA\in\ACatn$, the \emph{bar construction} $\Bi(\cA)\in\dgcoCatn$ associated to $\cA$ is simply defined to be $\Tcr{\sh{\cA}}$ as a graded non unital cocategory. The differential $d_{\Bi(\cA)}$ is the degree $1$ $(\id_{\Tcr{\sh{\cA}}},\id_{\Tcr{\sh{\cA}}})$-coderivation corresponding to the maps $\m{i}{\cA}$ under the isomorphism of \autoref{cofree}. As in the case of algebras (see \cite[Section 1.2.1]{LH}) it is easy to verify that the condition $d_{\Bi(\cA)}^2=0$ corresponds precisely to the relations \eqref{eqn:catrel}.

We also set $\Bia(\cA):=\aug{\Bi(\red{\cA})}\in\dgcoCata$ for $\cA\in\ACata$. Notice that, as a graded augmented cocategory, this is just $\Tc{\sh{\red{\cA}}}$. When $\cA$ is a dg category, we will simply write $\B(\cA)$ and $\Ba(\cA)$ to simplify the notation.

If $\fF\colon\cA_1\to\cA_2$ is a non-unital $A_\infty$ functor, then we can define a dg cofunctor $\Bi(\fF)\colon\Bi(\cA_1)\to\Bi(\cA_2)$ which is the unique morphism in $\gcoCatn$ corresponding, under \autoref{quivadj}, to the morphism $\Bi(\cA_1)\to\sh{\cA_2}$ in $\gQu$ naturally defined in terms of the components $\fF^i$ of $\fF$. As in the case of algebras (see \cite[Section 1.2.1]{LH}) it is easy to verify that the condition that $\Bi(\fF)$ commutes with the differentials $d_{\Bi(\cA_i)}$ corresponds precisely to the relations \eqref{eqn:funrel}.

\begin{remark}\label{rmk:Bifun}
Note that the composition in $\ACatn$ is then defined in such a way that
\begin{equation*}
\Bi\colon\ACatn\to\dgcoCatn
\end{equation*}
defines a fully faithful functor. It is important to observe that the composition in $\ACatn$ extends the natural one in $\dgCatn$.
\end{remark}

More generally, if we consider $\cA_1,\ldots,\cA_n,\cA\in\ACatn$, we can take morphisms
\[
\fF\colon\red{\aug{\Bi(\cA_1)}\otimes\cdots\otimes\aug{\Bi(\cA_n)}}\to\sh{\cA}
\]
in $\gQu$ satisfying the natural generalization of \eqref{eqn:funrel}. More precisely, this means that the natural extension
\[
\red{\aug{\Bi(\cA_1)}\otimes\cdots\otimes\aug{\Bi(\cA_n)}}\to\Bi(\cA)
\]
of $\fF$ in $\gcoCatn$ given by \autoref{quivadj} commutes with the differentials. Such an extension will be denoted by $\Bi(\fF)$ and $\fF$ will be called an \emph{$A_\infty$ multifunctor} from $\cA_1,\ldots,\cA_n$ to $\cA$. The set of  $A_\infty$ multifunctors from $\cA_1,\ldots,\cA_n$ to $\cA$ will be denoted $\ACatn(\cA_1,\ldots,\cA_n,\cA)$. Hence $\ACatn$ has the structure of a multicategory (see \cite[Chapter 3]{BLM} for an extensive discussion about multicategories). In particular, the construction is compatible with compositions, in the sense that, for every $\fF\in\ACatn(\cA_1,\dots,\cA_n,\cA)$ and every $\fG\in\ACatn(\cA,\cA')$, one has $\Bi(\fG\comp\fF)=\Bi(\fG)\comp\Bi(\fF)$.

Given $\fF\in\ACatn(\cA_1,\ldots,\cA_n,\cA)$ and $A_i\in\cA_i$ (for $i=1,\dots,n$), one can consider the restrictions $\fF\rest{(A_1,\dots,A_{i-1},\cA_i,A_{i+1},\dots,A_n)}\colon\Bi(\cA_i)\to\sh{\cA}$, which are ordinary $A_\infty$ functors. When $\cA_1,\ldots,\cA_n,\cA$ are cohomologically unital $A_\infty$ categories, we say that $\fF$ is a \emph{cohomologically unitaly $A_\infty$ multifunctor} if all its restrictions are cohomologically unital.\footnote{Note that the definition of cohomologically unital $A_\infty$ multifunctor in \cite{BLM} is different from the one above. Nevertheless, by \cite[Proposition 9.13]{BLM}, the two definitions are equivalent.} Clearly we denote by $\ACatc(\cA_1,\ldots,\cA_n,\cA)$ the set of cohomologically unital $A_\infty$ multifunctors from $\cA_1,\ldots,\cA_n$ to $\cA$.

\subsection{Cobar construction and adjunctions}\label{subsec:cobaradj}

Given a $\cC\in\dgcoCatn$, one gets a non-unital dg category which, as a non-unital graded category, is simply defined as
\[
\coB(\cC):=\Tar{\sh[-1]{\cC}}.
\]
In other words, the objects of $\coB(\cC)$ are the same as those in $\cC$ while
\[
\coB(\cC)(C_1,C_2):=\bigoplus\cC(D_n,C_2)[-1]\otimes\cC(D_{n-1},D_n)[-1]\otimes\cdots\otimes\cC(C_1,D_1)[-1],
\]
where the sum is over all integers $n\geq 0$ and all possible $n$-uples of objects $D_1,\ldots,D_n\in\Ob{\cC}$ (if $n=0$, in the sum we only get $\cC(C_1,C_2)[-1]$). We set $\m{1}{\coB(\cC)}$ to be the derivation corresponding, under the identification given in \autoref{cofree}, to $d_\cC$ and $\comult{\cC}$. The fact that $\cC$ is a dg cocategory implies that $\m{1}{\coB(\cC)}\comp\m{1}{\coB(\cC)}=0$.

Given $\fF\colon\cC_1\to\cC_2$ in $\dgcoCatn$, one can define $\coB(\fF)\colon\coB(\cC_1)\to\coB(\cC_2)$ in $\dgCatn$ as the unique morphism in $\gCatn$ corresponding, under \autoref{quivadj}, to the morphism $\sh[-1]{\cC_1}\to\coB(\cC_2)$ in $\gQu$ naturally defined in terms of $\fF$. It turns out that the fact that $\fF$ is a dg cofunctor implies that $\coB(\fF)$ commutes with $\m{1}{\coB(\cC_i)}$. By putting altogether, we get a faithful (but non-full) functor
\[
\coB\colon\dgcoCatn\to\dgCatn.
\]

\medskip

We can now investigate the relation between the bar and cobar constructions which is based on the notion of twisting cochain, used in the proof of \autoref{barcobaradj} below. Recall that, if $\cC\in\dgcoCatn$ and $\cA\in\dgCatn$, a \emph{twisting cochain} $\tc\colon\cC\to\cA$ is a degree $1$ morphism of graded quivers such that
\[
\m{1}{\cA}\comp\tc+\tc\comp d_{\cC}+\m{2}{\cA}\comp(\tc\otimes\tc)\comp\comult{\cC}=0.
\]
When $\cC\in\dgcoCata$ and $\cA\in\dgCata$, a twisting cochain $\cC\to\cA$ is \emph{admissible} if it is of the form $\cC\epi\red{\cC}\to\red{\cA}\mono\cA$, for some map $\red{\cC}\to\red{\cA}$ (which is then necessarily a twisting cochain).

The following result is proved in \cite[Lemma 1.2.2.5]{LH} in the case of algebras and is crucial for the rest of the paper.

\begin{prop}\label{barcobaradj}
	The bar and cobar constructions define adjoint functors
	\[
	\adjpair{\coB}{\dgcoCatn}{\dgCatn}{\B}.
	\]
\end{prop}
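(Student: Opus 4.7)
The plan is to construct the adjunction through the intermediary notion of twisting cochain: I will exhibit natural bijections
\[
\dgCatn(\coB(\cC),\cA)\iso\{\text{twisting cochains }\cC\to\cA\}\iso\dgcoCatn(\cC,\B(\cA))
\]
for every $\cC\in\dgcoCatn$ and every $\cA\in\dgCatn$, and check that the composite bijection is natural in both variables. This is the standard strategy used in \cite[Lemma 1.2.2.5]{LH} for (co)algebras, and the argument goes through essentially verbatim in the multi-object setting thanks to \autoref{quivadj} and \autoref{cofree}.

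For the first bijection, I would start from the graded version of \autoref{quivadj}: since $\coB(\cC)=\Tar{\sh[-1]{\cC}}$ as a graded non-unital category, a morphism $F\in\gCatn(\coB(\cC),\cA)$ corresponds to a morphism $\sh[-1]{\cC}\to\cA$ in $\gQu$, i.e.\ to a degree $1$ map $\tc\colon\cC\to\cA$ of graded quivers. What remains is to translate the condition that $F$ commute with the differential of $\coB(\cC)$ (the rest of the dg-functor structure being automatic, as $\cA$ has only $\m{1}{\cA}$ and $\m{2}{\cA}$). By the (graded) \autoref{cofree}, the derivation $\m{1}{\coB(\cC)}$ is determined on the generating quiver $\sh[-1]{\cC}$ by $d_{\cC}$ together with $\comult{\cC}$; evaluating $\m{1}{\cA}\comp F=F\comp\m{1}{\coB(\cC)}$ on a single letter of $\sh[-1]{\cC}$ then gives exactly
\[
\m{1}{\cA}\comp\tc+\tc\comp d_{\cC}+\m{2}{\cA}\comp(\tc\otimes\tc)\comp\comult{\cC}=0,
\]
which is the twisting cochain equation. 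Signs have to be tracked through the shift $\sh[-1]{(-)}$ exactly as in the algebra case.

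For the second bijection, I would dualize the argument: since $\B(\cA)=\Tcr{\sh{\cA}}$ as a graded non-unital cocategory, \autoref{quivadj} identifies $\gcoCatn(\cC,\B(\cA))$ with $\gQu(\cC,\sh{\cA})$, i.e.\ again with degree $1$ maps $\tc\colon\cC\to\cA$ of graded quivers. Now \autoref{cofree} describes the coderivation $d_{\B(\cA)}$ as being induced, on the cogenerating quiver $\sh{\cA}$, by $\m{1}{\cA}$ and $\m{2}{\cA}$. Imposing that a cofunctor $G\colon\cC\to\B(\cA)$ commute with differentials amounts to checking the equality $\pi\comp d_{\B(\cA)}\comp G=\pi\comp G\comp d_{\cC}$, where $\pi\colon\B(\cA)\epi\sh{\cA}$ is the cogenerating projection, because a morphism into a cofree cocategory is determined by its projection onto the cogenerators (this is the content of \autoref{quivadj}). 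Expanding, this collapses to the same twisting cochain equation as above.

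Putting the two bijections together yields a natural bijection $\dgCatn(\coB(\cC),\cA)\iso\dgcoCatn(\cC,\B(\cA))$. Naturality in $\cA$ and $\cC$ is formal once one notes that both bijections are defined by pre/post-composing with the canonical quiver-level maps $\sh[-1]{\cC}\hookrightarrow\coB(\cC)$ and $\B(\cA)\twoheadrightarrow\sh{\cA}$, both of which are visibly functorial. The main obstacle I anticipate is purely bookkeeping: the sign and shift conventions chosen in the bar construction (taken from \cite{LH} rather than \cite{Sei}) have to be used consistently so that the coderivation coming from $\{\m{i}{\cA}\}_{i\geq 1}$ and the derivation coming from $d_{\cC}$ and $\comult{\cC}$ produce, under both correspondences, exactly the same quadratic equation in $\tc$. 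No genuinely new idea beyond the (co)algebra case is required, because the multi-object structure enters only through extra indexing over objects and through \autoref{quivadj}/\autoref{cofree}, which have already been stated in the form needed.
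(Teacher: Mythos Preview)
Your proposal is correct and follows essentially the same approach as the paper: both pass through the intermediary of twisting cochains, using the graded version of \autoref{quivadj} to identify $\gcoCatn(\cC,\B(\cA))\iso\gQu(\cC,\sh{\cA})\iso\gQu(\sh[-1]{\cC},\cA)\iso\gCatn(\coB(\cC),\cA)$ and then observing that compatibility with differentials on either end cuts out exactly the twisting cochain condition in the middle. Your write-up is in fact more detailed than the paper's (which dispatches the verification in one sentence), but the strategy and the key inputs are identical.
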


\begin{proof}
	Given $\cC\in\dgcoCatn$ and $\cA\in\dgCatn$, in view of \autoref{quivadj} there are natural isomorphisms
	\begin{multline*}
	\gcoCatn(\cC,\B(\cA))=\gcoCatn(\cC,\Tcr{\sh{\cA}})\iso\gQu(\cC,\sh{\cA}) \\
	\iso\gQu(\sh[-1]{\cC},\cA)\iso\gCatn(\Tar{\sh[-1]{\cC}},\cA)=\gCatn(\coB(\cC),\cA).
	\end{multline*}
	It is easy to check that morphisms commuting with the differentials in the first and in the last terms correspond precisely to twisting cochains in the middle terms.
\end{proof}

We will denote by $\ncb\colon\coB\comp\B\to\id_{\dgCatn}$ the counit and by $\nbc\colon\id_{\dgcoCatn}\to\B\comp\coB$ the unit of the adjunction of \autoref{barcobaradj}. Notice that, by \autoref{rmk:Bifun} and the discussion following it, for every $\cA_1,\dots,\cA_n\in\ACatn$, the dg cofunctor
\[
\nbc[\red{\aug{\Bi(\cA_1)}\otimes\cdots\otimes\aug{\Bi(\cA_n)}}]\colon\red{\aug{\Bi(\cA_1)}\otimes\cdots\otimes\aug{\Bi(\cA_n)}}\to\B(\coB(\red{\aug{\Bi(\cA_1)}\otimes\cdots\otimes\aug{\Bi(\cA_n)}}))
\]
is of the form $\nbc[\red{\aug{\Bi(\cA_1)}\otimes\cdots\otimes\aug{\Bi(\cA_n)}}]=\Bi(\ncbi[\cA_1,\dots,\cA_n])$ for a unique $A_\infty$ multifunctor
\[
\ncbi[\cA_1,\dots,\cA_n]\in\ACatn(\cA_1,\dots,\cA_n,\coB(\red{\aug{\Bi(\cA_1)}\otimes\cdots\otimes\aug{\Bi(\cA_n)}})).
\]
Denoting by $\fdgAn\colon\dgCatn\to\ACatn$ the inclusion functor and setting
\[
\VdBn:=\coB\comp\Bi\colon\ACatn\to\dgCatn,
\]
it is clear that the $A_\infty$ functors $\ncbi[\cA]\colon\cA\to\coB(\Bi(\cA))=\VdBn(\cA)$ (for $\cA\in\ACatn$) define a natural transformation $\ncbi\colon\id_{\ACatn}\to\fdgAn\comp\VdBn$.

\begin{prop}\label{dgAnadj}
	There is an adjunction
	\[
	\adjpair{\VdBn}{\ACatn}{\dgCatn}{\fdgAn},
	\]
	whose unit is $\ncbi\colon\id_{\ACatn}\to\fdgAn\comp\VdBn$ and whose counit is $\ncb\colon\VdBn\comp\fdgAn=\coB\comp\B\to\id_{\dgCatn}$. Moreover, $\ncbi[\cA]$ (for every $\cA\in\ACatn$) and $\ncb[\cB]$ (for every $\cB\in\dgCatn$) are quasi-isomorphisms.
\end{prop}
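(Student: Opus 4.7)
The plan is to build the adjunction formally by composing the adjunction $\coB\dashv\B$ of \autoref{barcobaradj} with the fully faithful embedding $\Bi\colon\ACatn\to\dgcoCatn$ from \autoref{rmk:Bifun}. Concretely, for $\cA\in\ACatn$ and $\cB\in\dgCatn$, I would chain together the natural bijections
\[
\dgCatn(\VdBn(\cA),\cB)=\dgCatn(\coB(\Bi(\cA)),\cB)\isomor\dgcoCatn(\Bi(\cA),\B(\cB))\isomor\ACatn(\cA,\fdgAn(\cB)),
\]
where the first bijection is \autoref{barcobaradj} and the second uses the identity $\B(\cB)=\Bi(\fdgAn(\cB))$ together with the fully faithfulness of $\Bi$. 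Naturality in both variables is inherited from the constituent bijections.

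Next, I would identify the unit and counit by chasing identity morphisms. For $\cB\in\dgCatn$, the counit is the image of $\id_{\fdgAn(\cB)}\in\ACatn(\fdgAn(\cB),\fdgAn(\cB))$ under the inverse of the composite bijection above; tracing backwards, $\id_{\fdgAn(\cB)}$ corresponds to $\id_{\B(\cB)}\in\dgcoCatn(\B(\cB),\B(\cB))$, which in turn corresponds under $\coB\dashv\B$ to $\ncb[\cB]\colon\coB(\B(\cB))\to\cB$. Dually, for $\cA\in\ACatn$ the unit is the image of $\id_{\VdBn(\cA)}$, and tracing forwards yields $\nbc[\Bi(\cA)]\colon\Bi(\cA)\to\B(\coB(\Bi(\cA)))=\Bi(\VdBn(\cA))$, which by the very definition of $\ncbi[\cA]$ recalled just before the statement (applied with $n=1$, noting $\red{\aug{\Bi(\cA)}}=\Bi(\cA)$) equals $\Bi(\ncbi[\cA])$. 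So the unit at $\cA$ is $\ncbi[\cA]$, as desired.

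For the quasi-isomorphism assertions I would argue as follows. Since the unit and counit are natural transformations between functors that act as the identity on object sets, and their $A_\infty$/dg components on each pair of morphism spaces are given by the usual bar-cobar formulas, the question reduces to a pointwise computation on $\cA(A_1,A_2)$ and $\cB(B_1,B_2)$, which is the same computation as in the $A_\infty$-algebra case treated in \cite[Section~1.3]{LH}. In both cases one endows the underlying complex with the weight filtration by tensor length and analyzes the associated spectral sequence: for $\ncb[\cB]$ one filters $\coB(\B(\cB))(B_1,B_2)$ by the number of inner bar tensors; the differential on the $E_0$-page is the internal $\m{1}{\cB}$ and $\comult{\B(\cB)}$-part, and the standard contracting homotopy collapses the spectral sequence at $E_1$ to $\cB(B_1,B_2)$ concentrated in tensor degree zero. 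For $\ncbi[\cA]$ one argues symmetrically on $\VdBn(\cA)(A_1,A_2)$, filtering by the outer cobar tensor length and again contracting the resulting complex onto $\cA(A_1,A_2)$.

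The main obstacle I anticipate is the quasi-isomorphism statement rather than the formal adjunction: one has to check that the filtrations on $\VdBn(\cA)(A_1,A_2)$ and $\coB(\B(\cB))(B_1,B_2)$ are exhaustive and that the associated spectral sequences converge, which requires some care because these morphism spaces are infinite direct sums of tensor products and the intermediate objects $D_i$ range over all of $\Ob{\cA}$ (respectively $\Ob{\cB}$). Fortunately, the filtration and the contracting homotopy can be set up degree-wise and object-wise, so the convergence and the identification of $E_\infty$ reduce to the purely algebraic computation in \cite{LH}, whose carrying-over to the categorical setting is routine but deserves to be spelled out.
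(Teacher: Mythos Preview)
Your proposal is correct and follows essentially the same approach as the paper: the paper's proof of the adjunction is a one-line invocation of \autoref{barcobaradj} together with the full faithfulness of $\Bi$, which is exactly the composite bijection you spell out, and for the quasi-isomorphism claims the paper simply cites \cite[Lemma~1.3.3.6 \& Lemma~1.3.2.3]{LH}, whose arguments are the filtration/spectral-sequence computations you sketch. Your identification of the unit and counit by chasing identities is the standard unpacking of what the paper leaves implicit.
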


\begin{proof}
	The first part is a very easy consequence of \autoref{barcobaradj} (again, taking into account that $\Bi$ is fully faithful). The last statement is proved in \cite[Lemma 1.3.3.6 \& Lemma 1.3.2.3]{LH}.
\end{proof}

\subsection{$A_\infty$ categories of $A_\infty$ multifunctors}\label{subsect:Ainftyfunctors}

We now want to introduce the $A_\infty$ category of $A_\infty$ multifunctors from $\cA_1,\ldots,\cA_n$ to $\cA$. For this, we start with the following.

\begin{definition}\label{def:prenat}
Given two non-unital $A_\infty$ multifunctors $\fF_1,\fF_2\in\ACatn(\cA_1,\ldots,\cA_n,\cA)$, a \emph{prenatural transformation $\theta$} of degree $d$ between $\fF_1$ and $\fF_2$ is the datum, for any pair of objects $C_1,C_2\in\red{\aug{\Bi(\cA_1)}\otimes\cdots\otimes\aug{\Bi(\cA_n)}}$, of a morphism of graded $\K$-vector spaces
\[
\theta_{C_1,C_2}\colon\aug{\Bi(\cA_1)}\otimes\cdots\otimes\aug{\Bi(\cA_n)}(C_1,C_2)\to\cA(\fF_1(C_1),\fF_2(C_2))
\]
of degree $d$.
\end{definition}

Given $\cA_1,\ldots,\cA_n,\cA\in\ACatn$, it is explained in \cite[Proposition 8.15]{BLM} that one has a non-unital $A_\infty$ category denoted by $\FunAn(\cA_1,\ldots,\cA_n,\cA)$ whose objects are non-unital $A_\infty$ multifunctors between $\cA_1,\ldots,\cA_n$ and $\cA$ and morphisms are prenatural trasformations of all degrees. If $\cA_1,\ldots,\cA_n,\cA\in\ACatc$, we denote by $\FunAc(\cA_1,\ldots,\cA_n,\cA)$ the full $A_\infty$ subcategory of $\FunAn(\cA_1,\ldots,\cA_n,\cA)$ whose objects are cohomologically unital $A_\infty$ multifunctors.

\begin{remark}\label{rmk:unital}
If $\cA_1$ and $\cA_2$ are strictly unital $A_\infty$ categories one has the full $A_\infty$ subcategory $\FunA(\cA_1,\cA_2)$ of $\FunAn(\cA_1,\cA_2)$ whose objects are strictly unital $A_\infty$ functors.

On the other hand, if $\fF_1,\fF_2\colon\cA_1\to\cA_2$ are strictly unital $A_\infty$ functors, one can consider
\emph{strictly unital prenatural transformations} $\theta$ between them. Recall that $\theta$ is strictly unital if $\theta(f_i,\ldots,\id_A,\ldots,f_1)=0$, for all $i\geq 1$, all $A\in\cA_1$ and all morphisms $f_1,\ldots,f_i$. One could then define $\FunA^u(\cA_1,\cA_2)$ as the $A_\infty$ subcategory of $\FunA(\cA_1,\cA_2)$ with the same objects and morphisms the strictly unital prenatural transformations. By \cite[Lemma 8.2.1.3]{LH}, the natural inclusion $\FunA^u(\cA_1,\cA_2)\mono\FunA(\cA_1,\cA_2)$ is a quasi-isomorphism. Thus we will only work with the latter.
\end{remark}

\begin{remark}\label{natcoder}
Given $\fF_1,\fF_2\in\ACatn(\cA_1,\ldots,\cA_n,\cA)$, the graded $\K$-vector space $\Nat(\fF_1,\fF_2)$ consisting of prenatural transformations between $\fF_1$ and $\fF_2$ is isomorphic to $\coDer(\Bi(\fF_1),\Bi(\fF_2))$ by \autoref{cofree}. It follows from the discussion in \cite[Section 8.16]{BLM} (see also the beginning of \cite[Section 3.3]{Ma} for the case $n=1$) that this is actually an isomorphism of dg $\K$-vector spaces if we endow $\Nat(\fF_1,\fF_2)$ with the differential given by $\m{1}{\FunAn(\cA_1,\ldots,\cA_n,\cA)}$.  
\end{remark}

Given $\fF\in\ACatn(\cA_1,\dots,\cA_n,\cA)$ and $\cA'\in\ACatn$, there is an induced non-unital $A_\infty$ functor
\[
\rcf{\fF}\colon\FunAn(\cA,\cA')\to\FunAn(\cA_1,\dots,\cA_n,\cA')
\]
defined on objects by $\fG\mapsto\fG\comp\fF$. Moreover, if $\cA_1,\ldots,\cA_n,\cA,\cA'$ and $\fF$ are cohomologically unital, then $\rcf{\fF}$ restricts to a non-unital $A_\infty$ functor
\[
\rcf{\fF}\colon\FunAc(\cA,\cA')\to\FunAc(\cA_1,\dots,\cA_n,\cA').
\]

It is observed for example in \cite{BLM} that, if $\cA$ is a cohomologically unital (resp.\ strictly unital) $A_\infty$ category, then $\FunAn(\cA_1,\ldots,\cA_n,\cA)$ is a cohomologically unital (resp.\ strictly unital) $A_\infty$ category. Similarly, if $\cA$ is a dg category, then $\FunAn(\cA_1,\ldots,\cA_n,\cA)$ is a dg category as well. Moreover, if $\cA_1,\ldots,\cA_n$ are in $\ACatc$ and $\cA$ is in $\ACatc$ (resp. in $\ACat$ or in $\dgCat$), then $\FunAc(\cA_1,\ldots,\cA_n,\cA)$  is in $\ACatc$ (resp. in $\ACat$ or in $\dgCat$). Finally, if $\cA_1,\ldots,\cA_n$ are in $\ACat$  and $\cA$ is in $\ACat$ (resp. in $\dgCat$), then $\FunA(\cA_1,\ldots,\cA_n,\cA)$ is in $\ACat$ (resp. in $\dgCat$).

We also need to recall the following notions.

\begin{definition}
Let $\fF_1,\fF_2\in\ACatn(\cA_1,\ldots,\cA_n,\cA)$.	

(i) When $\cA$ is cohomologically unital, $\fF_1$ and $\fF_2$ are \emph{weakly equivalent} (denoted by $\fF_1\hison\fF_2$) if they are isomorphic in $H^0(\FunAn(\cA_1,\ldots,\cA_n,\cA))$.

(ii) When $n=1$ and $\fF_1^0=\fF_2^0$, $\fF_1$ and $\fF_2$ are \emph{homotopic} (denoted by $\fF_1\htpn\fF_2$) if there is a prenatural transformation $\theta$ of degree $0$ such that $\theta(\id_A)=0$, for every $A\in\cA_1$, and $\fF_2=\fF_1+\m{1}{\FunAn(\cA_1,\cA_2)}(\theta)$.
\end{definition}

Note that, if $\fF_1\htpn\fF_2$ and $\cA$ is cohomologically unital, then $\fF_1\hison\fF_2$ by \cite[Lemma 2.5]{Sei}.

Since $\hison$ is compatible with compositions, from the (multi)category $\ACatc$ one can obtain a quotient (multi)category $\QACatc$ with the same objects and whose morphisms are given by
\[
\QACatc(\cA_1,\ldots,\cA_n,\cA):=\ACatc(\cA_1,\ldots,\cA_n,\cA)/\hison.
\]
Similarly one can construct $\QACat$ from $\ACat$.

Although we will not need it in the rest of the paper, it is interesting to point out the following consequence of \autoref{dgAnadj}.

\begin{cor}\label{Ahtpdg}
For every $\cC\in\dgcoCatn$, every $\cA\in\dgCatn$ and every $\fF\in\ACatn(\coB(\cC),\cA)$, there exists $\fF'\in\dgCatn(\coB(\cC),\cA)$ such that $\fF\htpn\fF'$.
\end{cor}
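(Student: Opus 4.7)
The plan is to use the adjunction $\VdBn\dashv\fdgAn$ of \autoref{dgAnadj} to rewrite $\fF$ as coming from a dg functor on a cobar--bar resolution, and then repair the gap to a genuine dg functor via a bar--cobar homotopy. Concretely, \autoref{dgAnadj} supplies a unique dg functor
\[
\tilde\fF\colon\VdBn(\coB(\cC))=\coB(\B(\coB(\cC)))\to\cA
\]
satisfying $\fF=\tilde\fF\comp\ncbi[\coB(\cC)]$ as $A_\infty$ functors. I then set
\[
\fF':=\tilde\fF\comp\coB(\nbc[\cC])\in\dgCatn(\coB(\cC),\cA),
\]
where $\nbc[\cC]\colon\cC\to\B(\coB(\cC))$ is the unit of the adjunction $\coB\dashv\B$ of \autoref{barcobaradj}. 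As a composite of dg functors, $\fF'$ is itself a dg functor.

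Composition on the left with a strict $A_\infty$ functor preserves the homotopy relation $\htpn$: indeed, if a prenatural transformation $\theta$ of degree $0$ vanishing on identities realises $\fG_1\htpn\fG_2$, then $\tilde\fF\comp\theta$ has the same properties, as one checks through \autoref{natcoder} together with the fact that postcomposition with a dg cofunctor preserves coderivations. It therefore suffices to prove
\[
\ncbi[\coB(\cC)]\htpn\coB(\nbc[\cC])
\]
as $A_\infty$ functors $\coB(\cC)\to\coB(\B(\coB(\cC)))$. Notice that both $A_\infty$ functors are sections of the dg quasi-isomorphism $\ncb[\coB(\cC)]$: the former by the triangle identity for $\VdBn\dashv\fdgAn$, and the latter by the triangle identity for $\coB\dashv\B$. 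Applying the fully faithful functor $\Bi$ and using the identification $\Bi(\ncbi[\cA])=\nbc[\Bi(\cA)]$ recorded in the discussion preceding \autoref{dgAnadj}, together with the isomorphism $\Nat\iso\coDer$ of \autoref{natcoder}, the claim reduces to producing a degree $-1$ coderivation $\Theta$ between the dg cofunctors $\nbc[\B(\coB(\cC))]$ and $\B(\coB(\nbc[\cC]))$ from $\B(\coB(\cC))$ to $\B(\coB(\B(\coB(\cC))))$ whose differential measures their discrepancy.

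The main obstacle is the explicit construction of $\Theta$. Its existence is morally clear: both cofunctors become equal to $\id_{\B(\coB(\cC))}$ after postcomposition with $\B(\ncb[\coB(\cC)])$ (by the two triangle identities of $\coB\dashv\B$), and $\B(\ncb[\coB(\cC)])$ is itself a quasi-isomorphism by \autoref{dgAnadj}, so the two sections must differ by a chain-level primitive. Concretely, one encodes $\Theta$ via \autoref{cofree} as a degree $-1$ morphism of graded quivers from $\B(\coB(\cC))$ into the generating quiver $\sh{\coB(\B(\coB(\cC)))}$, extends by the coderivation property, and verifies the required equation by induction on the tensor-length filtration of $\B(\coB(\cC))$. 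This calculation parallels the one carried out in \cite[\S1.3]{LH} for $A_\infty$ algebras, and transfers to the present categorical setting essentially because all the cocategories involved are cocomplete.
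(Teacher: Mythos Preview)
Your argument follows the paper's proof almost verbatim through the decisive reduction: your $\tilde\fF$ is exactly $\ncb[\cA]\comp\VdBn(\fF)$, your candidate $\fF'=\tilde\fF\comp\coB(\nbc[\cC])$ coincides with the paper's, and both you and the paper reduce everything to showing $\ncbi[\coB(\cC)]\htpn\coB(\nbc[\cC])$ by observing that each is a section of the quasi-isomorphism $\ncb[\coB(\cC)]$.

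The only divergence is in how you finish. The paper closes in one line: since $\ncb[\coB(\cC)]$ is a quasi-isomorphism, it is invertible up to homotopy by \cite[Corollary~1.3.1.3]{LH}; two right inverses of a homotopy-invertible map are then homotopic. You instead pass through $\Bi$ to the cocategory side and propose to build an explicit degree $-1$ coderivation $\Theta$ by induction on tensor length. That is not wrong in principle, but as written it is only a sketch: you never specify the generating component of $\Theta$, and the appeal to ``the calculation in \cite[\S1.3]{LH}'' is too vague to count as a proof. There is also a minor slip: what \autoref{dgAnadj} gives you is that $\ncb[\coB(\cC)]$ is a quasi-isomorphism, not that $\B(\ncb[\coB(\cC)])$ is one (the bar construction need not preserve quasi-isomorphisms on the nose). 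Simply invoking homotopy invertibility of $A_\infty$ quasi-isomorphisms, as the paper does, avoids all of this and completes the proof immediately.
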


\begin{proof}
We define $\fF'$ as the composition
\[
\fF'\colon\coB(\cC)\mor{\coB(\nbc[\cC])}\coB(\B(\coB(\cC)))=\VdBn(\coB(\cC))\mor{\VdBn(\fF)}\VdBn(\cA)\mor{\ncb[\cA]}\cA.
\]
Notice that $\fF'$ is a (non-unital) dg functor by construction. Since $\ncb[\cA]\comp\ncbi[\cA]=\id_{\cA}$ (this is due to the fact that the composition
\[
\fdgAn(\cA)\mor{\ncbi[\fdgAn(\cA)]}\fdgAn(\VdBn(\fdgAn(\cA)))\mor{\fdgAn(\ncb[\cA])}\fdgAn(\cA)
\]
is the identity), we have
\[
\fF=\ncb[\cA]\comp\ncbi[\cA]\comp\fF=\ncb[\cA]\comp\VdBn(\fF)\comp\ncbi[\coB(\cC)].
\]
Thus, to conclude that $\fF\htpn\fF'=\ncb[\cA]\comp\VdBn(\fF)\comp\coB(\nbc[\cC])$, it is enough to prove that $\ncbi[\coB(\cC)]\htpn\coB(\nbc[\cC])$. To see this, just observe that $\ncb[\coB(\cC)]\colon\coB(\B(\coB(\cC)))=\VdBn(\coB(\cC))\to\coB(\cC)$ satisfies both $\ncb[\coB(\cC)]\comp\ncbi[\coB(\cC)]=\id_{\coB(\cC)}$ (similarly as above) and $\ncb[\coB(\cC)]\comp\coB(\nbc[\cC])=\id_{\coB(\cC)}$ (thanks to the adjunction between $\coB$ and $\B$). Since $\ncb[\coB(\cC)]$ is a quasi-isomorphism and quasi-isomorphisms are invertible up to homotopy (by \cite[Corollary 1.3.1.3]{LH}), this clearly implies that $\ncbi[\coB(\cC)]\htpn\coB(\nbc[\cC])$.
\end{proof}

\subsection{No model structure}\label{subsec:nomodel}

We conclude this section with a side remark that shows that there is no model structure for the category of $A_\infty$ categories.

It was proven in \cite{Ta} that $\dgCat$ has a (complete and cocomplete) model structure (the reader can have a look at \cite{Hov} for a thorough exposition about model categories). On the other hand, it was stated in \cite{LH} that the category of $A_\infty$ algebras does not carry a model structure, according to the modern terminology, in the sense that the category of $A_\infty$ algebras is not closed under limits and colimits.

As no explicit example supporting the latter statement is provided in \cite{LH} or, to our knowledge, in the existing literature, we sketch here a simple example in the more general setting of $A_\infty$ categories that may be very well known to experts.

Consider the following (unital) graded $\K$-algebras:
\[
\cA:=\K[\varepsilon_1,\varepsilon_2]\qquad\cA':=\K[\varepsilon],
\]
where $\varepsilon_1^2=\varepsilon_2^2=\varepsilon_1\varepsilon_2=\varepsilon_2\varepsilon_1=\varepsilon^2=0$, $\deg(\varepsilon_1)=\deg(\varepsilon_2)=0$ and $\deg(\varepsilon)=-1$. Being dg algebras (with zero differential), we can regard $\cA$ and $\cA'$ as strictly unital $A_\infty$ categories. Similarly, we will consider the dg morphism $\fF_1\colon\cA\to\cA'$ defined by
\[
\fF_1(\varepsilon_1)=\fF_1(\varepsilon_2)=0
\]
as a strict and strictly unital $A_\infty$ morphism. On the other hand, it is easy to check that one can define another strictly unital $A_\infty$ morphism $\fF_2\colon\cA\to\cA'$ by setting $\fF_2^1=\fF^1_1$,
\[
\fF_2^2(\varepsilon_i,\varepsilon_j)=
\begin{cases}
\varepsilon & \text{if $i=1$, $j=2$} \\
0 & \text{otherwise}
\end{cases}
\]
and $\fF_2^i=0$ for $i>2$.

\begin{lem}\label{lem:equal}
Let $\cA$, $\cA'$, $\fF_1$ and $\fF_2$ be as above. Then $\fF_1$ and $\fF_2$ do not admit an equalizer in $\ACat$. Moreover, they do not admit an equalizer in $\ACatc$, either.
\end{lem}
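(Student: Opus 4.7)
The plan is to argue by contradiction in both settings. Assume that a (strictly unital, respectively cohomologically unital) equalizer $\fE\colon\cE\to\cA$ of $\fF_1$ and $\fF_2$ exists. I will first extract the content of the equality $\fF_1\comp\fE=\fF_2\comp\fE$: writing out the $A_\infty$ composition formula at arity $2$ and using $\fF_1^1=\fF_2^1$, $\fF_1^i=0$ for $i\geq 2$, and the strict unitality of $\fF_2$, the equation collapses to
\[
\fF_2^2\bigl(\fE^1(x_2),\fE^1(x_1)\bigr)=0\qquad\text{for every composable pair $x_1,x_2$ in $\cE$,}
\]
while the other arities yield no new information. Because $\fF_2^2$ vanishes on $\cA\otimes\cA$ except on $\K\varepsilon_1\otimes\K\varepsilon_2$, where it sends $\varepsilon_1\otimes\varepsilon_2$ to $\varepsilon$, this is really a constraint on how $\fE^1$ distributes $\varepsilon_1$- and $\varepsilon_2$-components among composable endomorphisms in $\cE$.

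Next I supply enough equalizing probe morphisms into $\cA$ to violate this constraint. The relevant ones are the three strict dg inclusions
\[
\iota_0\colon\K\hookrightarrow\cA,\qquad \iota_i\colon\cB_i:=\K\oplus\K\varepsilon_i\hookrightarrow\cA\quad (i=1,2),
\]
where $\cB_i$ is the obvious sub-dg-algebra of $\cA$. Each is quickly checked to equalize $\fF_1$ and $\fF_2$: the image of $\iota_i$ has zero $\varepsilon_j$-component for $j\neq i$, so the degree-$2$ obstruction automatically vanishes, and at higher arities there is nothing to check since $\fF_1^i=\fF_2^i=0$ for $i\geq 3$. Being strict dg, these probes belong both to $\ACat$ and to $\ACatc$. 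Applying the universal property of the equalizer yields unique factorizations $\fH_0,\fH_1,\fH_2$ through $\fE$.

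The crucial step, and the main technical obstacle, is an object-level matching: a priori $\cE$ can have many objects and nothing obviously forces $\fH_1^0(\ast)$ and $\fH_2^0(\ast)$ to coincide, in which case $\fH_1^1(\varepsilon_1)$ and $\fH_2^1(\varepsilon_2)$ would lie in different endomorphism spaces and no contradiction could be extracted. To force coincidence I exploit the identities $\iota_i\comp(\K\hookrightarrow\cB_i)=\iota_0$; the uniqueness clause of the universal property then gives $\fH_i\comp(\K\hookrightarrow\cB_i)=\fH_0$ for $i=1,2$, so $\fH_1^0(\ast)=\fH_2^0(\ast)=\fH_0^0(\ast)=:E_0$. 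Consequently $u_i:=\fH_i^1(\varepsilon_i)$ both lie in $\cE(E_0,E_0)$ and are therefore composable, while by construction $\fE^1(u_i)=\varepsilon_i$. Plugging $(x_2,x_1)=(u_1,u_2)$ into the constraint from the first paragraph yields $\fF_2^2(\varepsilon_1,\varepsilon_2)=0$, contradicting $\fF_2^2(\varepsilon_1,\varepsilon_2)=\varepsilon\neq 0$. Since the probes, the derivation of the constraint on $\fE^1$, and the uniqueness trick all work verbatim when ``strictly unital'' is replaced by ``cohomologically unital'', the identical argument rules out an equalizer in $\ACatc$ as well.
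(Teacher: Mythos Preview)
Your argument is correct and follows the same overall strategy as the paper's: probe the putative equalizer $\fE$ with the dg inclusions $\K[\varepsilon_i]\hookrightarrow\cA$ (which equalize $\fF_1,\fF_2$), obtain endomorphisms $u_i\in\cE$ with $\fE^1(u_i)=\varepsilon_i$, and contradict the arity-$2$ identity $(\fF_1\comp\fE)^2=(\fF_2\comp\fE)^2$, which unwinds exactly to $\fF_2^2(\fE^1(x_2),\fE^1(x_1))=0$.

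The one place where your route diverges from the paper's is the \emph{object-matching} step. The paper first proves that the equalizing object $\cE$ has a single object, by showing that any two points $\K\to\cE$ must agree (using that $\fE$ is a monomorphism); in the cohomologically unital case this requires invoking \cite[Lemma~2.1]{Sei} to produce a point $\K\to\cE$ for each object of $\cE$. You instead feed in the extra probe $\iota_0\colon\K\hookrightarrow\cA$ and use the uniqueness clause of the universal property together with the factorizations $\iota_i\comp(\K\hookrightarrow\cB_i)=\iota_0$ to force $\fH_1^0(\ast)=\fH_2^0(\ast)$. This is a genuine simplification: it avoids the appeal to Seidel's lemma and makes the $\ACatc$ case literally identical to the $\ACat$ case. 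On the other hand, the paper's argument gives slightly more information (the equalizer, if it existed, would already be an $A_\infty$ \emph{algebra}), which is not needed for the contradiction but is conceptually pleasant.
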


\begin{proof}
Assume, for a contradiction, that $\fG\colon\cB\to\cA$ is an equalizer of $\fF_1$ and $\fF_2$ in $\ACat$ (respectively, in $\ACatc$).

Let us first observe that for every $B\in\Ob{\cB}$, there is a strictly (respectively, cohomologically) unital $A_\infty$ functor $\fF_B\colon\K\to\cB$ sending the unique object of $\K$ to $B$. Indeed, this is trivial if $\cB$ is strictly unital. Otherwise, we can use the fact that, by \cite[Lemma 2.1]{Sei}, there is a quasi-isomorphism $\fI\in\ACatc(\cB',\cB)$ with $\fI^0$ the identity, for some $\cB'\in\ACat$. Then we can take as $\fF_B$ the composition $\fI\comp\fF'_B$, where $\fF'_B\colon\K\to\cB'$ is a strictly unital $A_\infty$ functor sending the unique object of $\K$ to $B$.

Next we prove that also $\cB$ has only one object. To see this, given $B_1,B_2\in\Ob{\cB}$, let $\fF_{B_1}$ and $\fF_{B_2}$ be the $A_\infty$ functors introduced above. Since $\K$ and $\cA$ sit in degree $0$, the $A_\infty$ functor $\fG\comp\fF_{B_i}\colon\K\to\cA$ must be strict and $H(\fG\comp\fF_{B_i})$ can be identified with $(\fG\comp\fF_{B_i})^1$, for $i=1,2$. As $\fG\comp\fF_{B_i}$ is cohomologically unital, this implies that in any case it is also strictly unital. Since the source category is $\K$ and the target $\cA$ has only one object, we immediately deduce that $\fG\comp\fF_{B_1}=\fG\comp\fF_{B_2}$. As $\fG$ is a monomorphism in $\ACat$ (respectively, in $\ACatc$), this implies $\fF_{B_1}=\fF_{B_2}$, hence $B_1=B_2$, as wanted.

Thus $\cB$ is an $A_\infty$ algebra and $\fG$ is a morphism of $A_\infty$ algebras, which is an equalizer of $\fF_1$ and $\fF_2$ also in the category of strictly (respectively, cohomologically) unital $A_\infty$ algebras. We are going to see that this last statement gives a contradiction.

To this purpose, consider the (unital) $\K$-algebra $\cA_0:=\K[\varepsilon_0]$, where $\varepsilon_0^2=0$ and $\deg(\varepsilon_0)=0$. For $i=1,2$, we denote by $\fJ_i\colon\cA_0\to\cA$ the morphism of $\K$-algebras (which we regard, as usual, as a strict and strictly unital $A_\infty$ morphism) defined by $\fJ_i(\varepsilon_0)=\varepsilon_i$. It is clear that $\fF_1\comp\fJ_i=\fF_2\comp\fJ_i$. Hence, by the universal property of the equalizer $\fG$, we have that $\fJ_i$ factors through $\fG$. Since the images of $\fJ_1$ and $\fJ_2$ together span $\cA$ as a $\K$-vector space, we obtain that $\fG^1$ is surjective. From this it easy to see that $(\fF_1\comp\fG)^2\ne(\fF_2\comp\fG)^2$, which is impossible because $\fF_1\comp\fG=\fF_2\comp\fG$.
\end{proof}

The following is then straightforward.

\begin{prop}\label{prop:nomodel}
The categories $\ACat$ and $\ACatc$ do not admit a model structure.
\end{prop}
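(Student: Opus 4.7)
The plan is very short because essentially all the work has already been done in \autoref{lem:equal}. I would recall that, in the modern convention adopted in the paper (following Hovey \cite{Hov}), a model category is required to possess all small limits and colimits; in particular, it must have equalizers of every parallel pair of morphisms. Thus, to rule out any model structure on $\ACat$ or $\ACatc$, it suffices to exhibit a single parallel pair that fails to admit an equalizer.

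First I would invoke \autoref{lem:equal} directly: it produces strictly unital $A_\infty$ functors $\fF_1,\fF_2\colon\cA\to\cA'$ between honest graded $\K$-algebras which admit no equalizer in $\ACat$, nor in $\ACatc$. This is exactly the obstruction we need: if, say, $\ACat$ carried a model structure in the sense of \cite{Hov}, it would be complete, in particular every parallel pair would have an equalizer, contradicting \autoref{lem:equal}. The same argument, verbatim, rules out a model structure on $\ACatc$.

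I would then write the argument as a two-line deduction, with no computations, simply citing \autoref{lem:equal} and the definition of model category (with a pointer to \cite{Hov}). There is no main obstacle here: the content of the proposition is entirely absorbed into the preceding lemma, and the role of \autoref{prop:nomodel} is only to package that failure of limits as the standard statement that no model structure exists. The only point that might deserve a remark is that some older references (e.g.\ Quillen's original monograph) defined model categories requiring only finite limits and colimits; but even in that weaker sense the example of \autoref{lem:equal} is an equalizer of a parallel pair, hence a finite limit, so the conclusion is unchanged.
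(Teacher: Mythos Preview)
Your proposal is correct and matches the paper's approach exactly: the paper simply states that the proposition is ``straightforward'' after \autoref{lem:equal}, and your argument---that a model structure in the sense of \cite{Hov} requires all (in particular finite) limits, whence equalizers, contradicting \autoref{lem:equal}---is precisely the intended deduction. Your additional remark about Quillen's weaker axioms is a nice clarification but not needed for the paper's purposes.
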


The approach in \cite{LH} consists then in taking a reduced model structure on the category of $A_\infty$ algebras which does not require the existence of arbitrary limits and colimits. It would be interesting but maybe more difficult to exploit the same idea for $A_\infty$ categories. As this is not needed for the results in this paper, we leave it for future work.

\section{Equivalences of localizations}\label{sect:proofthm1}

This section deals with the proof of \autoref{thm:localizations} concerning the localizations of the categories of $A_\infty$ categories and their comparison with the localization of the category of dg categories.

\subsection{Equivalence between $\Hqe$ and $\HoACat$}

We will need the following unital version of \autoref{dgAnadj}.

\begin{prop}\label{dgAadj}
There is an adjunction
\[
\adjpair{\VdB}{\ACat}{\dgCat}{\fdgA},
\]
where $\fdgA$ is the inclusion functor. Moreover, the unit $\ntV\colon\id_{\ACat}\to\fdgA\comp\VdB$ and the counit $\nfV\colon\VdB\comp\fdgA\to\id_{\dgCat}$ are such that $\ntV[\cA]$ (for every $\cA\in\ACat$) and $\nfV[\cB]$ (for every $\cB\in\dgCat$) are quasi-isomorphisms.
\end{prop}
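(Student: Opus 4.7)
My plan is to deduce \autoref{dgAadj} from \autoref{dgAnadj} by constructing a strictly unital refinement of $\VdBn$, adapting the strictly unital bar--cobar arguments for $A_\infty$ algebras in \cite{LH} to the categorical setting. The key observation is that for $\cA\in\ACat$, the non-unital dg category $\VdBn(\cA)=\coB(\Bi(\cA))$ carries, for each object $A$, a distinguished degree-zero cycle $u_A := \sh[-1]{[\id_A]}$---namely the length-one tensor corresponding to the strict unit of $\cA$---and that a strictly unital dg replacement of $\VdBn(\cA)$ should be obtained by promoting $u_A$ to a genuine identity.

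Concretely, I would define $\VdB(\cA)$ as the strictly unital dg category built from $\VdBn(\cA)$ by (i) formally adjoining a strict unit $1_A$ at each object, (ii) identifying $u_A$ with $1_A$, and (iii) quotienting further by the dg ideal generated by the elements $\ncbi[\cA]^i(a_{i-1},\ldots,\id_A,\ldots,a_1)$ for $i\ge 2$, so as to encode the full strict unitality coming from $\cA$. Functoriality of $\VdB$ is inherited from $\VdBn$: for a strictly unital $\fF\in\ACat(\cA_1,\cA_2)$ the non-unital dg functor $\VdBn(\fF)$ sends each of the relevant distinguished elements at $A$ to its analogue at $\fF^0(A)$, and so descends to the quotient.

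I would then define the unit $\ntV[\cA]$ as $\ncbi[\cA]\colon\cA\to\VdBn(\cA)$ postcomposed with the projection to $\VdB(\cA)$; it is strictly unital by construction, since $\ncbi[\cA]^1(\id_A)=u_A$ becomes $1_A$ in $\VdB(\cA)$ while the higher $\ncbi[\cA]^i(\ldots,\id_A,\ldots)$ are killed. For $\cB\in\dgCat$, the non-unital counit $\ncb[\cB]$ sends $u_B$ to $\id_B$ and vanishes on the analogous higher elements (because $\cB$ is already strictly unital), hence descends to a strict dg functor $\nfV[\cB]\colon\VdB(\cB)\to\cB$. The adjunction $\dgCat(\VdB(\cA),\cB)\iso\ACat(\cA,\fdgA(\cB))$ then follows by combining the universal property of the quotient defining $\VdB(\cA)$ with \autoref{dgAnadj}: non-unital dg functors $\VdBn(\cA)\to\cB$ satisfying the relations forced by (i)--(iii) correspond, under the bijection of \autoref{dgAnadj}, precisely to the strictly unital $A_\infty$ functors $\cA\to\cB$.

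\textbf{The main obstacle} is showing that the quotient map $\VdBn(\cA)\to\VdB(\cA)$ is a quasi-isomorphism; given this, $\ntV[\cA]$ is a quasi-isomorphism by composition with $\ncbi[\cA]$, and $\nfV[\cB]$ is one as a factorisation of the quasi-isomorphism $\ncb[\cB]$. The heuristic reason is that all the relations imposed in (ii) and (iii) equate cycles which already represent the same cohomology class, as is forced by the quasi-isomorphism $\ncbi[\cA]$ together with the strict unitality of $\cA$. Converting this heuristic into a proof requires producing explicit contracting homotopies showing that the ideal being quotiented is acyclic, in the spirit of the bar--cobar calculations of \cite[Lemma 1.3.3.6 \& Lemma 1.3.2.3]{LH}. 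This is the delicate technical step; once settled, the adjunction itself and the naturality of $\ntV$ and $\nfV$ follow routinely from \autoref{dgAnadj}.
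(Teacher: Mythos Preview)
Your construction of $\VdB(\cA)$ and the verification of the adjunction coincide with the paper's: there $\VdB(\cA):=\aug{\VdBn(\cA)}/\Vid{\cA}$ with $\Vid{\cA}$ the smallest dg ideal making the composite $\cA\mor{\ncbi[\cA]}\VdBn(\cA)\mono\aug{\VdBn(\cA)}\epi\VdB(\cA)$ strictly unital---exactly your steps (i)--(iii)---and the bijection $\dgCat(\VdB(\cA),\cB)\iso\ACat(\cA,\cB)$ is deduced from \autoref{dgAnadj} just as you outline.

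The divergence, and the gap, is in the quasi-isomorphism step. First a warning about your formulation: if by ``the ideal being quotiented'' you mean $\Vid{\cA}\subset\aug{\VdBn(\cA)}$, that ideal is \emph{not} acyclic, since $u_A-1_A$ is a nontrivial degree-$0$ cycle (the summand $\K\,1_A$ carries trivial differential, so $1_A$ is never a boundary). What would suffice is acyclicity of the kernel $\VdBn(\cA)\cap\Vid{\cA}$ of the surjection $\VdBn(\cA)\to\VdB(\cA)$; but this is a two-sided dg ideal in a tensor algebra whose differential mixes all the $\m{i}{\cA}$, and writing down a contracting homotopy directly looks at least as hard as the original problem. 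The references to \cite[Lemma~1.3.3.6 and Lemma~1.3.2.3]{LH} do not obviously help here, since those treat the non-unital bar--cobar adjunction, not the extra relations created by imposing strict unitality.

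The paper bypasses this with a filtration argument (attributed to Van den Bergh). One filters both $\cA(A_1,A_2)$ and $\VdB(\cA)(A_1,A_2)$ by ``total length'' (number of bar factors), with $F^0$ spanned by the identities. These filtrations are compatible with the differentials and with $\ntV[\cA]$, so it suffices that $\gr{\ntV[\cA]}$ be a quasi-isomorphism. On the associated graded the induced differential of $\VdB(\cA)$ depends only on $\m{1}{\cA}$, which reduces one to the case where $\cA$ is augmented with $\m{i}{\red{\cA}}=0$ for $i>1$; under that assumption $\gr{\VdB(\cA)}$ identifies with $\aug{\VdBn(\red{\cA})}$ and $\gr{\ntV[\cA]}$ with $\aug{\ncbi[\red{\cA}]}$, which is a quasi-isomorphism by \autoref{dgAnadj}. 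This spectral-sequence style reduction to the non-unital case is the missing ingredient in your proposal, and it is considerably cleaner than attempting explicit homotopies on the unitality ideal.
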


\begin{proof}
Keeping the notation of \autoref{subsec:cobaradj}, for every $\cA\in\ACat$ let $\Vid{\cA}$ be the smallest dg ideal in $\aug{\VdBn(\cA)}$ such that the $A_{\infty}$ functor
\[
\cA\mor{\ncbi[\cA]}\VdBn(\cA)\mono\aug{\VdBn(\cA)}\epi\aug{\VdBn(\cA)}/\Vid{\cA}
\]
is strictly unital. We then define $\VdB(\cA):=\aug{\VdBn(\cA)}/\Vid{\cA}$ and denote the above composition by $\ntV[\cA]\colon\cA\to\VdB(\cA)=\fdgA(\VdB(\cA))$. We claim that $\ntV[\cA]$ is a universal arrow from $\cA$ to $\fdgA$, namely that, for every $\cB\in\dgCat$, the map
\begin{equation}\label{e:dgAadj}
\begin{split}
\dgCat(\VdB(\cA),\cB) & \to \ACat(\cA,\cB) \\
\fG & \mapsto \fG\comp\ntV[\cA]
\end{split}
\end{equation}
is bijective. Indeed, given $\fF\in\ACat(\cA,\cB)$, we can regard $\fF$ as an element of $\ACatn(\cA,\cB)$. By \autoref{dgAnadj}, there exists unique $\fF'\in\dgCatn(\VdBn(\cA),\cB)$ such that $\fF=\fF'\comp\ncbi[\cA]$, and $\fF'$ extends uniquely to $\fF''\in\dgCat(\aug{\VdBn(\cA)},\cB)$. It follows easily from the definition of $\Vid{\cA}$ that $\Vid{\cA}\subseteq\ker(\fF'')$; hence $\fF''$ factors uniquely through a dg functor $\fG\colon\aug{\VdBn(\cA)}/\Vid{\cA}\to\cB$. It is then clear that $\fG\in\dgCat(\VdB(\cA),\cB)$ is unique such that $\fF=\fG\comp\ntV[\cA]$. Thus, by \cite[Theorem 2, p.\ 83]{M}, the map on objects $\cA\mapsto\VdB(\cA)$ extends uniquely to a functor $\VdB\colon\ACat\to\dgCat$, which is left adjoint to $\fdgA$ and such that $\ntV\colon\id_{\ACat}\to\fdgA\comp\VdB$ is the unit of the adjunction.

To conclude, it is enough to show that $\ntV[\cA]$ is a quasi-isomorphism for every $\cA\in\ACat$. Indeed, assuming this, $\ntV[\cB]=\ntV[\fdgA(\cB)]$ is a quasi-isomorphism for every $\cB\in\dgCat$. Since the composition
\[
\fdgA(\cB)\mor{\ntV[\fdgA(\cB)]}\fdgA(\VdB(\fdgA(\cB)))\mor{\fdgA(\nfV[\cB])}\fdgA(\cB)
\]
is the identity, it follows that $\fdgA(\nfV[\cB])$ (whence $\nfV[\cB]$) is a quasi-isomorphism, too.

The argument to show that $\ntV[\cA]$ is a quasi-isomorphism seems to be well known to experts and it was pointed out to us by Michel Van den Bergh. We outline it here for the convenience of the reader.

Given $A_1,A_2\in\cA$, we first give a filtration on the graded vector space $\cA(A_1,A_2)$:
\[
F^n\cA(A_1,A_2)=\begin{cases}\K\id_{A_1}&\text{if $n=0$ and $A_1=A_2$}\\ 0&\text{if $n=0$ and $A_1\neq A_2$}\\\cA(A_1,A_2)&\text{otherwise.}\end{cases}
\]
Similarly, we define a filtration on $\VdB(\cA)(A_1,A_2)$ given by
\[
F^0\VdB(\cA)(A_1,A_2)=\begin{cases}\K\id_{A_1}&\text{if $A_1=A_2$}\\ 0&\text{if $A_1\neq A_2$}\end{cases}
\]
while, for $n>0$, we set $F^n\VdB(\cA)(A_1,A_2)$ to be the vector space freely generated by the elements of the form
\[
(f_1|\cdots|f_{i_1})\otimes(f_{i_1+1}|\cdots|f_{i_2})\otimes\cdots\otimes(f_{i_{m-1}+1}|\cdots|f_{k}),
\]
for all $k\leq n$ and all morphisms $f_1,\ldots,f_{i_1},f_{i_1+1},\ldots,f_{i_2},\ldots,f_{i_{m-1}+1},\ldots,f_{k}$ in $\cA$. Here we denote by $|$ the tensor product in the cocategory while $\otimes$ stays for the tensor product in the category.

It is easy to check that these filtrations are compatible with the differential and with $\ntV[\cA]$. This clearly implies that $\ntV[\cA]$ is a quasi-isomorphism if the induced morphism (of dg quivers) $\gr{\ntV[\cA]}\colon\gr{\cA}\to\gr{\VdB(\cA)}$ is a quasi-isomorphism. Since the differential on $\gr{\VdB(\cA)}$ (which is induced by the one on $\VdB(\cA)$) depends only on $\m{1}{\cA}$, we can assume that $\cA$ is augmented with $\m{i}{\red{\cA}}=0$ for  $i>1$. Under this assumption $\gr{\VdB(\cA)}$ can be identified with $\aug{\VdBn(\red{\cA})}$ and $\gr{\ntV[\cA]}$ with $\aug{\ncbi[\red{\cA}]}$. Since $\ncbi[\red{\cA}]\colon\red{\cA}\to\VdBn(\red{\cA})$ is a quasi-isomorphism by \autoref{dgAnadj}, we conclude that $\gr{\ntV[\cA]}$ is a quasi-isomorphism, as well.
\end{proof}

From \autoref{dgAadj} it is easy to deduce the following result.

\begin{thm}\label{thm:1}
The functors $\fdgA$ and $\VdB$ induce the functors
\[
\Ho{\fdgA}\colon\Hqe\to\HoACat\qquad \text{and}\qquad\Ho{\VdB}\colon\HoACat\to\Hqe
\]
which are quasi-inverse equivalences of categories.
\end{thm}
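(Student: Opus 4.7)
The plan is to deduce the equivalence as a formal consequence of the adjunction established in \autoref{dgAadj}, using the standard principle that an adjunction whose unit and counit are pointwise weak equivalences descends to an equivalence of the localized categories. The non-trivial content---constructing $\VdB$, establishing the adjunction, and showing that $\ntV[\cA]$ and $\nfV[\cB]$ are quasi-equivalences---has already been carried out, so what remains is essentially bookkeeping.

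First I would verify that both $\fdgA$ and $\VdB$ send quasi-equivalences to quasi-equivalences, so that the universal property of localization produces induced functors $\Ho{\fdgA}\colon\Hqe\to\HoACat$ and $\Ho{\VdB}\colon\HoACat\to\Hqe$. For $\fdgA$ this is immediate, since a dg functor is a quasi-equivalence in $\dgCat$ if and only if it is a quasi-equivalence when viewed as a strictly unital $A_\infty$ functor (both conditions amount to $H(\fF)$ being an equivalence of ordinary categories). For $\VdB$, given a quasi-equivalence $\fF\colon\cA_1\to\cA_2$ in $\ACat$, the naturality square
\[
\ntV[\cA_2]\comp\fF=\VdB(\fF)\comp\ntV[\cA_1],
\]
combined with the 2-out-of-3 property for quasi-equivalences and the fact from \autoref{dgAadj} that $\ntV[\cA_1]$ and $\ntV[\cA_2]$ are quasi-equivalences, forces $\VdB(\fF)$ to be a quasi-equivalence.

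Finally, since each component of $\ntV$ and $\nfV$ becomes an isomorphism in the respective localization, these natural transformations descend to natural isomorphisms $\id_{\HoACat}\iso\Ho{\fdgA}\comp\Ho{\VdB}$ and $\Ho{\VdB}\comp\Ho{\fdgA}\iso\id_{\Hqe}$, exhibiting $\Ho{\VdB}$ and $\Ho{\fdgA}$ as quasi-inverse equivalences. I do not anticipate any real obstacle here: once the adjunction and the fact that its unit and counit are componentwise quasi-equivalences are in hand, the passage to localizations is purely formal.
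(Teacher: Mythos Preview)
Your proposal is correct and follows essentially the same route as the paper's own proof: both verify that $\fdgA$ and $\VdB$ preserve quasi-equivalences (trivially for $\fdgA$, via the naturality square for $\ntV$ and 2-out-of-3 for $\VdB$), invoke the universal property of localization to obtain $\Ho{\fdgA}$ and $\Ho{\VdB}$, and then observe that the componentwise quasi-isomorphisms $\ntV[\cA]$ and $\nfV[\cB]$ become isomorphisms in the localizations, so the induced natural transformations are isomorphisms of functors.
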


\begin{proof}
Obviously $\fdgA\colon\dgCat\to\ACat$ preserves quasi-equivalences, and the same is true for $\VdB\colon\ACat\to\dgCat$. Indeed, for every $\fF\colon\cA\to\cA'$ in $\ACat$ we have $\VdB(\fF)\comp\ntV[\cA]=\ntV[\cA']\comp\fF$. Since $\ntV[\cA]$ and $\ntV[\cA']$ are quasi-isomorphisms, this immediately implies that $\VdB(\fF)$ is a quasi-equivalence if $\fF$ is such. Thus, by the universal property of localization, $\fdgA$ and $\VdB$ induce the required functors $\Ho{\fdgA}$ and $\Ho{\VdB}$. Moreover, $\ntV$ and $\nfV$ induce natural transformations
\begin{gather*}
\nhtV\colon\Ho{\id_{\ACat}}=\id_{\Ho{\ACat}}\to\Ho{\fdgA\comp\VdB}=\Ho{\fdgA}\comp\Ho{\VdB}, \\
\nhfV\colon\Ho{\VdB\comp\fdgA}=\Ho{\VdB}\comp\Ho{\fdgA}\to\Ho{\id_{\dgCat}}=\id_{\Ho{\dgCat}}
\end{gather*}
such that $\nhtV[\cA]$ (for every $\cA\in\ACat$) and $\nhfV[\cB]$ (for every $\cB\in\dgCat$) are just the images in the localizations of $\ntV[\cA]$ and $\nfV[\cB]$. As $\ntV[\cA]$ and $\nfV[\cB]$ are quasi-isomorphisms, this proves that $\nhtV$ and $\nhfV$ are isomorphisms of functors.
\end{proof}

\begin{remark}\label{ACatdg}
Defining $\ACatdg$ to be the full subcategory of $\ACat$ with objects the dg categories, the above argument can be adapted in an obvious way to prove that there is an equivalence of categories between $\Hqe$ and $\HoACatdg$ (the latter category denoting, of course, the localization of $\ACatdg$ with respect to quasi-equivalences). Hence $\HoACat$ and $\HoACatdg$ are equivalent, as well.
\end{remark}

\subsection{Equivalence between $\HoACat$ and $\HoACatc$}

We start discussing some preliminary results that show how to replace a cohomologically unital $A_\infty$ category or functor with a strictly unital one.

\begin{prop}\label{catcu}
Given $\cA\in\ACatc$, there exists $\cA'\in\ACat$ such that $\cA\iso\cA'$ in $\ACatc$.
\end{prop}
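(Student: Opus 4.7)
The plan is to produce $\cA'$ having the same set of objects and the same underlying graded quiver as $\cA$, but with modified $A_\infty$ operations that are strictly unital, together with an $A_\infty$ functor $\fF\colon\cA\to\cA'$ satisfying $\fF^0=\id$ and $\fF^1=\id$ on each morphism space. For any such $\fF$, the dg cofunctor $\Bi(\fF)\colon\Bi(\cA)\to\Bi(\cA')$ is the identity on cogenerators (both bar constructions share the same underlying graded cocategory $\Tcr{\sh{\cA}}$), hence an automorphism of graded cocategories whose inverse can be built order by order; the inverse automatically respects differentials, so $\fF$ is an isomorphism in $\ACatn$. Since $H(\fF)=\id_{H(\cA)}$, the functor $\fF$ is cohomologically unital, yielding $\cA\iso\cA'$ in $\ACatc$.

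To carry this out, first I would choose, for each $A\in\Ob{\cA}$, a cocycle $e_A\in\cA(A,A)$ of degree $0$ representing the cohomological identity $\id_A\in H(\cA)$; this element will serve as the strict unit of $\cA'$. Cohomological unitality of $\cA$ guarantees that $\m{2}{\cA}(e_{A_2},\farg)-\id$ and $\m{2}{\cA}(\farg,e_{A_1})-\id$ on each complex $(\cA(A_1,A_2),\m{1}{\cA})$ are null-homotopic, and more generally that each expression $\m{i}{\cA}(\ldots,e_A,\ldots)$ for $i\ge 3$ vanishes in cohomology in a suitably compatible sense. After fixing explicit null-homotopies, I would construct $\m{i}{\cA'}$ and $\fF^i$ (for $i\ge 2$) by induction on $i$, keeping $\m{1}{\cA'}=\m{1}{\cA}$ and requiring at each step that $e_A$ be a strict unit for the partial structure on $\cA'$ and that $\fF$ satisfy the $A_\infty$-functor relations \eqref{eqn:funrel} through order $i$. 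At each stage, the $A_\infty$ associativity relations \eqref{eqn:catrel} together with the inductive data present the failure of both conditions as a closed element in the relevant morphism complex of $\cA$, which is exact by cohomological unitality; a chosen primitive then furnishes $\fF^i$ and thereby determines $\m{i}{\cA'}$. This is essentially the $A_\infty$ gauge/transfer procedure, cf.\ \cite[Lemma 2.1]{Sei}.

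The principal obstacle lies in the inductive bookkeeping: at each order $i$ one must verify that the would-be obstruction is genuinely a cocycle (this follows from the $A_\infty$ relations \eqref{eqn:catrel} for $\cA$ combined with the inductive hypothesis on $\m{j}{\cA'}$ and $\fF^j$ for $j<i$), and that cohomological unitality forces it to be a coboundary, so that a suitable primitive exists. Once the induction is verified, $\fF$ is a cohomologically unital $A_\infty$ isomorphism from $\cA$ to the strictly unital $\cA'$, completing the proof.
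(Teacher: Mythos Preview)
Your proposal is correct and matches the paper's approach: the paper's proof consists solely of the citation \cite[Lemma 2.1]{Sei}, and the formal-diffeomorphism (gauge) argument you sketch---keeping the underlying graded quiver, choosing cocycle representatives $e_A$, and inductively correcting $\m{i}{\cA'}$ and $\fF^i$ so that $e_A$ becomes a strict unit---is exactly Seidel's construction there. The one point the paper adds immediately after the proof is that this argument ``secretly uses a reduction to minimal models'' and hence the hypothesis that $\K$ is a field; your outline does not make this dependence explicit, and it is precisely what is needed to justify the step where you assert that the inductive obstruction, though closed, is actually a coboundary.
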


\begin{proof}
See \cite[Lemma 2.1]{Sei}.
\end{proof}

The proof of the above result, which is crucial for the rest of the paper, secretly uses a reduction to \emph{minimal models} for $A_\infty$ categories. Such models do not exist in general for $A_\infty$ categories defined over commutative rings which are not fields.

We now want to state a result similar to the above one but for cohomologically unital $A_\infty$ functors, which is probably well known to experts. To this purpose, we will denote by $\ACatcu$ the full subcategory of $\ACatc$ with the same objects as $\ACat$, and, as usual, by $\HoACatcu$ its localization with respect to quasi-equivalences.

\begin{prop}\label{funcu}
Given $\fF\in\ACatcu(\cA_1,\cA_2)$, there exists $\fF'\in\ACat(\cA_1,\cA_2)$ such that $\fF\htpn\fF'$.
\end{prop}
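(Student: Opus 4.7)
The plan is to construct $\fF'$ together with a prenatural transformation $\theta$ of degree $0$ satisfying $\theta^0=0$ (the natural reading of $\theta(\id_A)=0$, compatible with the requirement that $\fF$ and $\fF'$ agree on objects), by an inductive construction on the arity that exploits the cohomological unitality of $\fF$ to resolve the obstruction at each stage. I set $(\fF')^0:=\fF^0$ from the outset.

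For the base case at arity $1$: cohomological unitality yields $\fF^1(\id_A)-\id_{\fF^0(A)}=\m{1}{\cA_2}(t_A)$ for some $t_A\in\cA_2(\fF^0(A),\fF^0(A))$ of degree $-1$. With $\theta^0=0$ and $\m{1}{\cA_1}(\id_A)=0$ (strict unitality of $\cA_1$), the arity-$1$ piece of $\m{1}{\FunAn(\cA_1,\cA_2)}(\theta)$ evaluated at $\id_A$ reduces to $\m{1}{\cA_2}(\theta^1(\id_A))$; setting $\theta^1(\id_A):=-t_A$ (extended $\K$-linearly by zero on a chosen complement of $\K\,\id_A$ in $\cA_1(A,A)$) produces a first modification $\fF^{(1)}$ whose arity-$1$ component sends $\id_A$ to $\id_{\fF^0(A)}$.

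For the inductive step: assume $\fF^{(k-1)}$ is an $A_\infty$ functor satisfying the strict unitality conditions at all arities $<k$. Expanding the $A_\infty$ functor relation \eqref{eqn:funrel} for $\fF^{(k-1)}$ in arity $k+1$ on an input $(f_k,\ldots,\id_A,\ldots,f_1)$, and invoking the strict unitality of $\cA_1$ (which kills $\m{i}{\cA_1}(\ldots,\id_A,\ldots)$ for $i\geq 3$ and simplifies it for $i=2$) together with the inductive hypothesis on the lower-arity components of $\fF^{(k-1)}$, the relation collapses to the statement that the arity-$k$ component of $\fF^{(k-1)}$ on an identity-containing input is a $\m{1}{\cA_2}$-coboundary. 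Choose $\theta^k$ trivializing this cocycle on identity-containing inputs (and vanishing on inputs without identities), and update $\fF^{(k)}:=\fF^{(k-1)}+\m{1}{\FunAn(\cA_1,\cA_2)}(\theta^{(k)})$. Since $\theta^{(k)}$ is supported in arity $\geq k$, the strict unitality achieved at lower arities is undisturbed.

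The formal sum $\theta:=\sum_{k\geq 1}\theta^{(k)}$ is well-defined arity-by-arity, and $\fF':=\fF+\m{1}{\FunAn(\cA_1,\cA_2)}(\theta)$ is the desired strictly unital functor with $\fF\htpn\fF'$. The main obstacles are (i) verifying at each inductive step that the obstruction is indeed a coboundary, which requires a careful expansion of the $A_\infty$ functor relation together with sign tracking, and (ii) checking that each modification $\fF^{(k)}$ remains a bona fide $A_\infty$ functor (not merely a collection of maps of the right degree). Both points are most transparent in the bar picture: via \autoref{cofree} the prenatural transformations $\theta^{(k)}$ correspond to coderivations, and the $A_\infty$ functor axioms for $\fF^{(k)}$ become the condition that the modified $\Bi(\fF^{(k)})$ commutes with differentials, a property that can be verified inductively in the coderivation formalism.
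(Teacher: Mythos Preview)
Your overall strategy---correct $\fF$ arity by arity via a prenatural transformation $\theta$---is the right framework, and it is indeed the shape of the argument Seidel alludes to in \cite[Remark 2.2]{Sei}. The gap is at the heart of the inductive step. Expanding the $A_\infty$ functor relation \eqref{eqn:funrel} on an identity-containing input shows only that the obstruction is a \emph{cocycle}, not that it is a \emph{coboundary}. Concretely, at arity~$2$ with $(\fF^{(1)})^1(\id_A)=\id_{\fF^0(A)}$, the relation for $n=2$ on $(f,\id_A)$ collapses, after the obvious cancellations coming from strict unitality of $\cA_1$ and $\cA_2$, to
\[
\m{1}{\cA_2}\bigl((\fF^{(1)})^2(f,\id_A)\bigr)=-(\fF^{(1)})^2(\m{1}{\cA_1}(f),\id_A),
\]
which says that $(\fF^{(1)})^2(\farg,\id_A)$ is a chain map of the appropriate degree, not that it is null-homotopic. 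Since the correction coming from a $\theta^{(k)}$ supported in arity $\ge k$ contributes, at arity exactly $k$, only the Hom-complex differential of $\theta^k$, you genuinely need null-homotopy of the obstruction, and this does not follow from the functor relations alone. Passing to the bar picture does not help here: it repackages the same equations but does not supply the missing exactness.

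This is precisely why the proofs in the literature (and the paper's own proof, which proceeds by citation) do not argue directly as you do. Lef\`evre-Hasegawa first reduces to \emph{minimal} $A_\infty$ algebras via \cite[Proposition 3.2.4.3]{LH}: one replaces $\cA_1$, $\cA_2$ and $\fF$ by minimal models, proves the statement there \cite[Th\'eor\`eme 3.2.2.1]{LH}, and then transfers back. In the minimal setting $\m{1}{}=0$, so the differential on prenatural transformations is governed by $\m{2}{}$ and higher, and the obstruction calculus is of a different nature (in particular, corrections at arity $k$ draw on $\theta^{j}$ for $j<k$, not just $\theta^k$). Your write-up contains no trace of this reduction, and without it point~(i)---which you yourself flag as the main obstacle---remains unresolved.
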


\begin{proof}
This is mentioned in \cite[Remark 2.2]{Sei} without a proof, since one could give an explicit one which is very similar, in spirit, to that of \cite[Lemma 2.1]{Sei}. Alternatively, the result follows from \cite[Theorem 3.2.2.1]{LH} in the case of minimal $A_\infty$ algebras. The passage to arbitrary $A_\infty$ algebras follows from \cite[Proposition 3.2.4.3]{LH}. This also covers the case of $A_\infty$ categories as explained in \cite[Section 5.1]{LH}. For all the details of the proof, see also \cite[Section 3.2]{Orn}.
\end{proof}

The following consequence will be used later.

\begin{cor}\label{lem:qefun}
For every $\cA_1,\cA_2\in\ACat$ there is an isomorphism in $\HoACat$
\[
\FunA(\cA_1,\cA_2)\iso\FunAc(\cA_1,\cA_2).
\]
\end{cor}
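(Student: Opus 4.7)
The plan is to show that the natural strict inclusion $\finc\colon\FunA(\cA_1,\cA_2)\mono\FunAc(\cA_1,\cA_2)$ is a quasi-equivalence in $\ACat$, which by \autoref{thm:1} is then invertible in $\HoACat$.

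First, I would observe that $\finc$ makes sense as a morphism in $\ACat$: both categories are full $A_\infty$ subcategories of $\FunAn(\cA_1,\cA_2)$, and by the discussion preceding \autoref{rmk:unital}, they are strictly unital because $\cA_2\in\ACat$. In particular, $\finc$ is a strict, strictly unital $A_\infty$ functor.

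Next, I would argue that $\finc$ is quasi-fully faithful for free: the morphism complexes between two strictly unital $A_\infty$ functors $\fF_1,\fF_2\colon\cA_1\to\cA_2$ coincide in $\FunA(\cA_1,\cA_2)$ and in $\FunAc(\cA_1,\cA_2)$, since both are defined as the spaces of all prenatural transformations (\autoref{def:prenat}) with the same differential $\m{1}{\FunAn(\cA_1,\cA_2)}$ coming from the identification $\Nat(\fF_1,\fF_2)\iso\coDer(\Bi(\fF_1),\Bi(\fF_2))$ of \autoref{natcoder}. Hence $\finc$ induces the identity on Hom-complexes between any two of its objects.

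The remaining point, which is the essential content of the corollary, is essential surjectivity of $H^0(\finc)$. Given a cohomologically unital $\fF\in\FunAc(\cA_1,\cA_2)$, \autoref{funcu} furnishes a strictly unital $\fF'\in\FunA(\cA_1,\cA_2)$ with $\fF\htpn\fF'$. Because the target $\cA_2$ is strictly unital, in particular cohomologically unital, the homotopy $\fF\htpn\fF'$ upgrades to the weak equivalence $\fF\hison\fF'$ by the remark following the definition of $\hison$ (a direct appeal to \cite[Lemma 2.5]{Sei}). Thus $\fF$ and $\fF'$ are isomorphic in $H^0(\FunAn(\cA_1,\cA_2))$, and since $\FunAc$ is a full $A_\infty$ subcategory, also in $H^0(\FunAc(\cA_1,\cA_2))$. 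The expected main obstacle, essential surjectivity, is therefore already absorbed into \autoref{funcu}; once that is in hand, combining quasi-full-faithfulness with essential surjectivity on $H^0$ shows that $\finc$ is a quasi-equivalence, and applying $\Ho{\fdgA}\comp\Ho{\VdB}\iso\id_{\HoACat}$ (or simply the universal property of the localization) gives the desired isomorphism in $\HoACat$.
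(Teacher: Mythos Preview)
Your proposal is correct and follows the same approach as the paper: the paper's proof is the single sentence ``The fully faithful inclusion functor $\FunA(\cA_1,\cA_2)\mono\FunAc(\cA_1,\cA_2)$ is actually a quasi-equivalence by \autoref{funcu}.'' You have simply unpacked this, making explicit why the inclusion is strictly unital, why it is (quasi-)fully faithful, and why \autoref{funcu} (together with the passage $\htpn\Rightarrow\hison$) gives essential surjectivity on $H^0$.
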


\begin{proof}
The fully faithful inclusion functor $\FunA(\cA_1,\cA_2)\mono\FunAc(\cA_1,\cA_2)$ is actually a quasi-equivalence by \autoref{funcu}.
\end{proof}

We also need the following preliminary result.

\begin{lem}\label{htpHo}
If $\fF,\fF'\in\ACatcu(\cA_1,\cA_2)$ are such that $\fF\htpn\fF'$, then $\fF$ and $\fF'$ have the same image in $\HoACatcu$. If moreover $\fF$ and $\fF'$ are strictly unital, then $\fF$ and $\fF'$ have the same image also in $\HoACat$.
\end{lem}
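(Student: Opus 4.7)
The plan is to combine the bar--cobar adjunction of \autoref{dgAnadj} and \autoref{dgAadj} with the equivalence of \autoref{thm:1}. For the second (strictly unital) statement, I would proceed as follows. By \autoref{natcoder}, the homotopy $\theta$ corresponds to a degree-zero coderivation between the dg cofunctors $\Bi(\fF),\Bi(\fF')\colon\Bi(\cA_1)\to\Bi(\cA_2)$, and the identity $\fF'=\fF+\m{1}{\FunAn(\cA_1,\cA_2)}(\theta)$ translates, at the bar level, into a classical chain homotopy between $\Bi(\fF)$ and $\Bi(\fF')$. Applying $\coB$ and passing through the strictly unital quotient used to construct $\VdB$ in the proof of \autoref{dgAadj}, one obtains chain-homotopic dg functors $\VdB(\fF),\VdB(\fF')\colon\VdB(\cA_1)\to\VdB(\cA_2)$. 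Chain-homotopic dg functors become equal in $\Hqe$ by the standard argument factoring through an interval dg category; the equivalence $\Ho{\VdB}\colon\HoACat\isomor\Hqe$ of \autoref{thm:1} then reflects this equality back to $\fF=\fF'$ in $\HoACat$.

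For the first (cohomologically unital) statement, I would reduce to the strictly unital case by invoking \autoref{funcu}: pick strictly unital $\fG,\fG'\in\ACat(\cA_1,\cA_2)$ with $\fF\htpn\fG$ and $\fF'\htpn\fG'$. The relation $\htpn$ is visibly transitive (the sum of two degree-zero prenatural transformations both vanishing on insertions of identities still vanishes on identities), so $\fG\htpn\fG'$, and the second statement gives $\fG=\fG'$ in $\HoACat$, hence in $\HoACatcu$ via the functor induced by the inclusion $\ACat\mono\ACatcu$. To conclude $\fF=\fG$ and $\fF'=\fG'$ in $\HoACatcu$ without appealing to the very statement being proved, I would rerun the bar--cobar argument at the non-unital level: $\VdBn(\fF)$ and $\VdBn(\fG)$ are chain-homotopic dg functors, so they agree in the localization of $\dgCatn$ by quasi-equivalences, and the quasi-isomorphisms $\ncbi[\cA_i]$ of \autoref{dgAnadj} transport this equality back to $\HoACatcu$.

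The main obstacle will be to make precise the step ``chain-homotopic dg functors are equal in $\Hqe$'' and its non-unital analogue, and to track the sign and coderivation conventions cleanly from \autoref{natcoder} through $\coB$. An alternative, more geometric route would be to construct a path object $P=\FunAc(\overline{I},\cA_2)$, where $\overline{I}$ denotes the contractible groupoid on two objects seen as a dg category in degree zero: the two endpoint evaluations $\pi_0,\pi_1\colon P\to\cA_2$ are quasi-equivalences sharing the diagonal $\Delta\colon\cA_2\to P$ as a common section, so $\pi_0=\pi_1$ in $\HoACatcu$; and the homotopy between $\fF$ and $\fF'$ should correspond, via the multifunctor formalism of \autoref{subsect:Ainftyfunctors}, to an $A_\infty$ functor $\fH\colon\cA_1\to P$ with $\pi_i\comp\fH$ equal to $\fF$ and $\fF'$ respectively, yielding $\fF=\pi_0\comp\fH=\pi_1\comp\fH=\fF'$ in $\HoACatcu$ (and in $\HoACat$ when $\fF,\fF'$ are strictly unital).
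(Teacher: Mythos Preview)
Your alternative ``path object'' route is essentially the paper's own argument. The paper uses a cylinder rather than a path object: following \cite[Remark 1.11]{Sei}, one factors both $\fF$ and $\fF'$ through a single $A_\infty$ functor $\fG\colon\cA_1\to I\otimes\cA_2$, where $I$ is a contractible dg algebra equipped with two dg algebra maps $f_0,f_1\colon I\to\K$ sharing the common section $\strmor{I}$. In the localization one then has $\loccl{f_i\otimes\id_{\cA_2}}=\loccl{\strmor{I}\otimes\id_{\cA_2}}^{-1}$ for $i=0,1$, whence $\loccl{\fF}=\loccl{\fF'}$. The strictly unital case works because the homotopy can be taken strictly unital (by \cite[Proposition 3.2.4.3]{LH}), so that $\fG$ and all maps involved live in $\ACat$. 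Your $\FunAc(\overline{I},\cA_2)$ plays the role dual to $I\otimes\cA_2$; the logical skeleton (two quasi-equivalences with a common section, hence equal in the localization) is identical.

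Your main bar--cobar route, by contrast, has a genuine gap in the cohomologically unital part. After reducing to showing $\loccl{\fF}=\loccl{\fG}$ in $\HoACatcu$ for $\fF\htpn\fG$ with $\fG$ strictly unital, you propose to use $\VdBn$ and the quasi-isomorphisms $\ncbi[\cA_i]$. But $\VdBn(\cA_i)$ is only a non-unital dg category, so it is not an object of $\ACatcu$ (whose objects are strictly unital by definition); the morphism $\ncbi[\cA_i]$ is therefore not available in $\ACatcu$ and cannot be inverted there. What you actually need at this point is precisely the first assertion of the lemma, so the reduction to the strictly unital case is circular. Even for the strictly unital part, the passage from a $(\Bi(\fF),\Bi(\fF'))$-coderivation to a ``classical chain homotopy'' between the coalgebra maps $\Bi(\fF)$ and $\Bi(\fF')$, and then through $\coB$ and the quotient by $\Vid{\cA_1}$, is not a routine unwinding of definitions: the difference of two coalgebra morphisms is not a coderivation, and $\coB$ does not obviously carry such data to a chain homotopy of dg functors. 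The paper sidesteps all of this by working with the cylinder directly.
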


\begin{proof}
One can use the argument of \cite[Remark 1.11]{Sei}. Namely, there exists an $A_\infty$ functor $\fG\colon\cA_1\to I\otimes\cA_2$ such that $\fF=(f_0\otimes\id_{\cA_2})\comp\fG$ and $\fF'=(f_1\otimes\id_{\cA_2})\comp\fG$. Here $I$ is a suitably defined dg algebra and $f_i\colon I\to\K$ (for $i=0,1$) is a quasi-isomorphism in $\dgAlg$ such that $f_i\comp\strmor{I}=\id_{\K}$ (where $\strmor{I}\colon\K\to I$ is the structure morphism). Moreover, everything is in $\ACatcu$, and even in $\ACat$ if $\fF$ and $\fF'$ are strictly unital (as in this case we can assume that also the homotopy is strictly unital, by \cite[Proposition 3.2.4.3]{LH}). Writing $\loccl{\fH}$ for the image of $\fH$ in the localization, in both cases it follows that
\[
\loccl{\fF}=\loccl{f_0\otimes\id_{\cA_2}}\comp\loccl{\fG}=\loccl{\strmor{I}\otimes\id_{\cA_2}}^{-1}\comp\loccl{\fG}=\loccl{f_1\otimes\id_{\cA_2}}\comp\loccl{\fG}=\loccl{\fF'}.
\]
This concludes the proof.
\end{proof}

We can now prove another part of \autoref{thm:localizations}.

\begin{thm}\label{thm:2}
The inclusion functor $\ACat\to\ACatc$ induces an equivalence of categories $\HoACat\to\HoACatc$.
\end{thm}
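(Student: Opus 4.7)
The plan is to factor the inclusion $\iota\colon\ACat\to\ACatc$ as $\iota_2\comp\iota_1$, with $\iota_1\colon\ACat\hookrightarrow\ACatcu$ the identity on objects and inclusion on morphisms, and $\iota_2\colon\ACatcu\hookrightarrow\ACatc$ a full embedding. Both clearly preserve quasi-equivalences, so they descend to the localizations, and it suffices to prove that $\Ho{\iota_1}$ and $\Ho{\iota_2}$ are equivalences separately.

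First I would handle $\Ho{\iota_2}$. Essential surjectivity is immediate from \autoref{catcu}: every $\cA\in\ACatc$ admits an isomorphism in $\ACatc$ to some $\cA'\in\ACatcu$, and such an isomorphism is in particular a quasi-equivalence, hence invertible in $\HoACatc$. For fully faithfulness, $\iota_2$ is a full subcategory inclusion, so the morphism sets agree on the nose at the level of $\ACatcu$ and $\ACatc$. A morphism in $\HoACatc(\cA_1,\cA_2)$ with $\cA_1,\cA_2\in\ACatcu$ is represented by a zig-zag in $\ACatc$; applying \autoref{catcu} to each intermediate object produces an equivalent zig-zag whose intermediate objects all lie in $\ACatcu$. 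The same replacement technique, applied to any sequence of elementary identifications between two zig-zags, gives injectivity on Hom-sets.

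Next I would turn to $\Ho{\iota_1}$. Essential surjectivity is trivial because $\ACat$ and $\ACatcu$ share the same objects. Surjectivity on Hom-sets uses \autoref{funcu} together with \autoref{htpHo}: in a zig-zag in $\ACatcu$ between objects of $\ACat$, each cohomologically unital functor is homotopic to a strictly unital one, and by \autoref{htpHo} this replacement does not alter the image in $\HoACatcu$; since homotopic functors induce the same map on cohomology, backward quasi-equivalences remain quasi-equivalences after replacement, so the new zig-zag lies in $\ACat$. The main obstacle is injectivity on Hom-sets: if $\fF,\fG\in\ACat(\cA_1,\cA_2)$ coincide in $\HoACatcu$, one must argue that the chain of elementary relations witnessing this equality in $\ACatcu$ can itself be strictified. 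This requires applying \autoref{funcu} and \autoref{htpHo} uniformly to every functor appearing in such a certificate, producing an analogous certificate in $\ACat$ and giving $\fF=\fG$ in $\HoACat$. Composing the equivalences $\HoACat\iso\HoACatcu\iso\HoACatc$ then concludes the proof.
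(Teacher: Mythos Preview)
Your decomposition through $\ACatcu$ is exactly the one the paper uses, and your treatment of $\Ho{\iota_2}$ is fine, though the paper is more efficient: since \autoref{catcu} furnishes an \emph{isomorphism} in $\ACatc$, the full subcategory inclusion $\ACatcu\hookrightarrow\ACatc$ is already an equivalence of categories (before localizing), and an equivalence that identifies the classes of quasi-equivalences descends to an equivalence of localizations without any zig-zag bookkeeping.

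For $\Ho{\iota_1}$ your approach diverges from the paper's and the injectivity step is where it becomes a genuine gap. The paper does not argue about zig-zags at all; instead it constructs an explicit inverse. One defines $\fcuu\colon\ACatcu\to\HoACat$ as the identity on objects and, for $\fF\in\ACatcu(\cA_1,\cA_2)$, sets $\fcuu(\fF)$ to be the image in $\HoACat$ of any $\fF'\in\ACat(\cA_1,\cA_2)$ with $\fF\htpn\fF'$ (such $\fF'$ exists by \autoref{funcu}). Well-definedness and functoriality follow from the second part of \autoref{htpHo}; since quasi-equivalences go to isomorphisms, $\fcuu$ factors through $\fhcuu\colon\HoACatcu\to\HoACat$. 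One then checks $\fhcuu\comp\Ho{\iota_1}=\id$ directly and $\Ho{\iota_1}\comp\fhcuu=\id$ from the first part of \autoref{htpHo}. This yields injectivity for free.

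By contrast, your phrase ``producing an analogous certificate in $\ACat$'' hides real work. A certificate of equality in $\HoACatcu$ is a chain of elementary moves on zig-zags (composing adjacent arrows, inserting or cancelling $s$ against $s^{-1}$, etc.). After strictifying each arrow via \autoref{funcu}, a composition move $(\ldots,\fF,\fG,\ldots)\rightsquigarrow(\ldots,\fG\comp\fF,\ldots)$ becomes $(\ldots,\fF',\fG',\ldots)\rightsquigarrow(\ldots,\fG'\comp\fF',\ldots)$, but $\fG'\comp\fF'$ need not equal the chosen strictification $(\fG\comp\fF)'$; it is only homotopic to it. Similarly, cancelling a backward $s$ against a forward $s$ requires the \emph{same} strictification to appear in both places. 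So each elementary move in $\ACatcu$ translates not to one elementary move in $\ACat$ but to an elementary move followed by an appeal to the second part of \autoref{htpHo}, and one must argue that these adjustments propagate correctly through the entire chain back to the original endpoints $\fF,\fG$ (which must not themselves be altered). None of this is in your write-up. It can presumably be carried out, but as written the injectivity argument is only a statement of intent, whereas the paper's inverse-functor construction disposes of the issue in two lines.
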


\begin{proof}
By \autoref{catcu} the inclusion functor $\ACatcu\to\ACatc$ is an equivalence, and it follows easily that the same is true for the induced functor $\HoACatcu\to\HoACatc$. Therefore, it is enough to prove that the inclusion functor $\fucu\colon\ACat\to\ACatcu$ induces an equivalence (actually even an isomorphism of categories) $\Ho{\fucu}\colon\HoACat\to\HoACatcu$. To this purpose, we can define a functor $\fcuu\colon\ACatcu\to\HoACat$ as follows. Of course, $\fcuu$ is the identity on objects, whereas for every morphism $\fF$ in $\ACatcu$ we define $\fcuu(\fF)$ to be the image in $\HoACat$ of a morphism $\fF'$ in $\ACat$ whose existence is ensured by \autoref{funcu}. Notice that $\fcuu(\fF)$ is well defined by (the second part of) \autoref{htpHo}. It is straightforward to check that $\fcuu$ is really a functor and that it send quasi-equivalences to isomorphisms, thus inducing a functor $\fhcuu\colon\HoACatcu\to\HoACat$. It is clear that $\fhcuu\comp\Ho{\fucu}=\id_{\HoACat}$, and it follows from (the first part of) \autoref{htpHo} that $\Ho{\fucu}\comp\fhcuu=\id_{\HoACatcu}$.
\end{proof}

\subsection{Equivalence between $\HoACat$ and $\QACat$}

We conclude this section with the last comparisons in \autoref{thm:localizations}.

\begin{prop}\label{ucueq}
The inclusion functor $\ACat\to\ACatc$ induces an equivalence of categories $\QACat\to\QACatc$.
\end{prop}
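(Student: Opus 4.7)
The plan is to prove the induced functor $\fun{Q}\colon\QACat\to\QACatc$ is well-defined, essentially surjective, full, and faithful, with all the real work already packaged into \autoref{catcu} and \autoref{funcu}.

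First I would check that the inclusion $\ACat\to\ACatc$ descends to the quotients. Since the relation $\hison$ is defined (in both settings) as isomorphism in $H^0(\FunAn(\cA_1,\cA_2))$, a relation that refers only to the ambient category of non-unital $A_\infty$ categories, two strictly unital $A_\infty$ functors that are $\hison$-equivalent in $\ACat$ are automatically $\hison$-equivalent in $\ACatc$. Hence the inclusion passes to a functor $\fun{Q}\colon\QACat\to\QACatc$ that is the identity on objects and sends the class $[\fF]_{\ACat}$ to the class $[\fF]_{\ACatc}$.

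For essential surjectivity, given $\cA\in\ACatc$, \autoref{catcu} produces $\cA'\in\ACat$ and an isomorphism $\cA\iso\cA'$ in $\ACatc$. Passing to the quotient, this isomorphism becomes an isomorphism in $\QACatc$ between $\cA$ and the image $\fun{Q}(\cA')$. Thus every object of $\QACatc$ is isomorphic to one coming from $\QACat$.

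For fullness, take $\cA_1,\cA_2\in\ACat$ and a morphism $[\fF]\in\QACatc(\cA_1,\cA_2)$ with $\fF\in\ACatc(\cA_1,\cA_2)$. Since the source and target are strictly unital, $\fF$ lies in $\ACatcu(\cA_1,\cA_2)$, so by \autoref{funcu} there exists $\fG\in\ACat(\cA_1,\cA_2)$ with $\fF\htpn\fG$. By the remark recalled just before \autoref{Ahtpdg} (citing \cite[Lemma 2.5]{Sei}) homotopic $A_\infty$ functors are weakly equivalent, so $\fF\hison\fG$ in $\ACatc$, meaning $[\fF]=[\fG]$ in $\QACatc$, and $[\fG]\in\QACat(\cA_1,\cA_2)$ maps to $[\fF]$. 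Faithfulness is immediate: if $\fF_1,\fF_2\in\ACat(\cA_1,\cA_2)$ satisfy $\fun{Q}([\fF_1])=\fun{Q}([\fF_2])$, then $\fF_1\hison\fF_2$ in $\ACatc$, which means $\fF_1$ and $\fF_2$ are isomorphic in $H^0(\FunAn(\cA_1,\cA_2))$; this is exactly the definition of $\hison$ in $\ACat$, so $[\fF_1]=[\fF_2]$ in $\QACat$.

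There is no serious obstacle: the substance has been absorbed into \autoref{catcu} and \autoref{funcu}, and the only point to be careful about is that the definition of $\hison$ refers to $\FunAn$ (not $\FunAc$ or $\FunA$), which ensures the relation is preserved under the inclusion and does not introduce spurious identifications in either direction.
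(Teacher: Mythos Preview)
Your proof is correct and follows essentially the same approach as the paper: faithfulness is immediate from the fact that $\hison$ is defined via $\FunAn$ in both settings, fullness comes from \autoref{funcu} (together with $\htpn\Rightarrow\hison$), and essential surjectivity comes from \autoref{catcu}. The paper's proof is just a terser statement of exactly these three points.
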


\begin{proof}
Clearly the inclusion functor $\ACat\to\ACatc$ induces a faithful functor $\QACat\to\QACatc$. This functor is full by \autoref{funcu} and it is essentially surjective by \autoref{catcu}.
\end{proof}

\begin{lem}\label{Genovese}
If $\fF,\fF'\in\ACatdg(\cA_1,\cA_2)$ are such that $\fF\hison\fF'$, then $\fF$ and $\fF'$ have the same image in $\HoACatdg$.
\end{lem}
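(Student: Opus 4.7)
The plan is to transfer the problem to an assertion about dg functors between dg categories via the adjunction $\VdB\dashv\fdgA$ from \autoref{dgAadj}, where it becomes the classical cylinder argument for Tabuada's model structure on $\dgCat$.

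Regarding $\fF,\fF'$ as morphisms in $\ACat(\cA_1,\cA_2)$ (since $\ACatdg$ is a full subcategory of $\ACat$), \autoref{dgAadj} yields unique factorizations $\fF=\tilde{\fF}\comp\ntV[\cA_1]$ and $\fF'=\tilde{\fF'}\comp\ntV[\cA_1]$ for dg functors $\tilde{\fF},\tilde{\fF'}\colon\VdB(\cA_1)\to\cA_2$. Since $\ntV[\cA_1]$ is a quasi-isomorphism, precomposition induces a quasi-equivalence $\rcf{\ntV[\cA_1]}\colon\FunAc(\VdB(\cA_1),\cA_2)\to\FunAc(\cA_1,\cA_2)$. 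This is the standard homotopy-invariance of $\FunAc$ in the source: composing with a homotopy inverse of $\ntV[\cA_1]$ (provided by \cite[Corollary 1.3.1.3]{LH}) gives a quasi-inverse. Combined with \autoref{lem:qefun}, the hypothesis $\fF\hison\fF'$ then lifts to $\tilde{\fF}\hison\tilde{\fF'}$.

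The lemma thereby reduces to the assertion that two dg functors $\cB_1\to\cB_2$ between dg categories that are isomorphic in $H^0$ of their functor category become equal in $\Hqe$. This is the folklore consequence of the cylinder object in Tabuada's model category $\dgCat$: the closed degree-$0$ natural transformations $\alpha,\beta$ witnessing the equivalence, together with the null-homotopies $h_0,h_1$ realizing $\alpha\beta\sim\id$ and $\beta\alpha\sim\id$, assemble into a dg functor $\tilde{H}\colon\cB_1\otimes\cI_\K\to\cB_2$, where $\cI_\K$ is a cofibrant dg model of the ``walking isomorphism''. Both inclusions $i_0,i_1\colon\cB_1\to\cB_1\otimes\cI_\K$ are quasi-equivalences sharing an inverse in $\Hqe$, namely the projection induced by $\cI_\K\to\K$. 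Consequently $[\tilde{\fF}]=[\tilde{H}\comp i_0]=[\tilde{H}\comp i_1]=[\tilde{\fF'}]$ in $\Hqe$. Composing with the image of $\ntV[\cA_1]$ (an isomorphism in $\HoACatdg$ by \autoref{dgAadj}) and invoking the equivalence $\Hqe\iso\HoACatdg$ from \autoref{ACatdg}, one concludes $[\fF]=[\fF']$ in $\HoACatdg$.

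The main obstacle is the cylinder construction itself. The naive two-object dg category with a single strict isomorphism is not cofibrant, so $\cI_\K$ must be chosen carefully (e.g.\ via Drinfeld's resolutions or Tabuada's explicit cofibrant models), and the $H^0$-level data $\alpha,\beta,h_0,h_1$ must be refined so that it actually extends to a dg functor on $\cB_1\otimes\cI_\K$. Once $\cI_\K$ is fixed, the extension follows from the universal property of the dg tensor product and routine diagram-chasing on the null-homotopies, but pinning down $\cI_\K$ and verifying its role is the non-trivial ingredient; an additional minor point is the standard but non-recorded homotopy-invariance of $\FunAc$ used in the reduction step.
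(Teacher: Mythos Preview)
Your reduction step via \autoref{dgAadj} is sound: one does obtain dg functors $\tilde{\fF},\tilde{\fF'}\colon\VdB(\cA_1)\to\cA_2$ with $\tilde{\fF}\hison\tilde{\fF'}$, and it suffices to identify their classes in $\Hqe$.

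The genuine gap is the cylinder step. The relation $\tilde{\fF}\hison\tilde{\fF'}$ means that $\tilde{\fF}$ and $\tilde{\fF'}$ are isomorphic in $H^0(\FunAc(\VdB(\cA_1),\cA_2))$, and the witnesses $\alpha,\beta$ are closed degree~$0$ \emph{prenatural} transformations in the sense of \autoref{def:prenat}: they carry higher components $\alpha^1,\alpha^2,\dots$ which measure precisely the failure of the objectwise maps $\alpha^0_A$ to be \emph{strictly} natural with respect to morphisms of $\VdB(\cA_1)$. A dg functor $\VdB(\cA_1)\otimes\cI_\K\to\cA_2$, however cofibrant you take $\cI_\K$, encodes for each morphism $\phi$ of $\VdB(\cA_1)$ a strictly commuting square between $\tilde{\fF}(\phi)$ and $\tilde{\fF'}(\phi)$; there is no slot in $\VdB(\cA_1)\otimes\cI_\K$ to absorb the higher components $\alpha^i$ for $i\ge1$. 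So the asserted extension is not ``routine diagram-chasing'': it would amount to strictifying an $A_\infty$ natural isomorphism between dg functors to a dg natural isomorphism, which is not available in general without altering one of the functors.

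The paper circumvents this by using a path object rather than a cylinder, and by staying in the $A_\infty$ world for the homotopy. Namely, \cite[Lemma~4.8]{Ge} converts an $A_\infty$ (pre)natural transformation---with all its higher components---into an $A_\infty$ functor $\fG\colon\cA_1\to\dgMor{\cA_2}$ such that $\fS\comp\fG=\fF$ and $\fT\comp\fG=\fF'$; the hypothesis $\fF\hison\fF'$ forces the image of $\fG$ to lie in $\pob{\cA_2}$. Since $\fS,\fT\colon\pob{\cA_2}\to\cA_2$ are quasi-equivalences sharing a common section (the diagonal $\cA_2\to\pob{\cA_2}$), they have equal images in $\HoACatdg$, and the conclusion follows. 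The point is that the correspondence between $A_\infty$ natural transformations and $A_\infty$ functors into the morphism category is exact, not merely up to strictification; this is what makes the argument go through without the obstruction you encounter on the cylinder side.
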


\begin{proof}
By \cite[Lemma 4.8]{Ge} the existence of a natural transformation $\fF\to\fF'$ (which we can assume to be strictly unital, as it is explained in \autoref{rmk:unital}) implies the existence of $\fG\in\ACatdg(\cA_1,\dgMor{\cA_2})$ such that $\fF=\fS\comp\fG$ and $\fF'=\fT\comp\fG$. Moreover, the fact that $\fF\hison\fF'$ actually implies that the image of $\fG$ is contained in the full dg subcategory $\pob{\cA_2}$ of $\dgMor{\cA_2}$ whose objects are homotopy equivalences (see also \cite[Section 2.2]{CS}). To conclude it is enough to note that $\fS,\fT\colon\pob{\cA_2}\to\cA_2$ are quasi-equivalences with the same images in $\HoACatdg$ (they both coincide with the inverse of the image of the natural dg functor $\cA_2\to\pob{\cA_2}$).
\end{proof}

The following result concludes the proof of \autoref{thm:localizations}.

\begin{thm}\label{thm:3}
The categories $\HoACat$ and $\QACat$ are equivalent.
\end{thm}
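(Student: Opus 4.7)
The goal is to construct mutually inverse functors $\Phi\colon\HoACat\to\QACat$ and $\Psi\colon\QACat\to\HoACat$, both the identity on objects, by invoking the universal properties of the localization $\ACat\to\HoACat$ and of the quotient $\ACat\to\QACat$. Concretely, I will show (A) that the projection $\ACat\to\HoACat$ identifies $\hison$-equivalent morphisms, yielding $\Psi$, and (B) that the projection $\ACat\to\QACat$ sends quasi-equivalences to isomorphisms, yielding $\Phi$. Mutual invertibility then follows formally because both composites are induced by $\id_{\ACat}$ through the corresponding universal properties.

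The heart of the argument is (A), and it proceeds by reduction to the dg case via \autoref{ACatdg} and \autoref{Genovese}. Suppose $\fF,\fF'\in\ACat(\cA_1,\cA_2)$ satisfy $\fF\hison\fF'$. Postcomposition with the quasi-equivalence $\ntV[\cA_2]\colon\cA_2\to\fdgA(\VdB(\cA_2))$ preserves the relation $\hison$ (natural isomorphisms in the functor $A_\infty$ category are stable under composition), so $\ntV[\cA_2]\comp\fF\hison\ntV[\cA_2]\comp\fF'$. Naturality of $\ntV$ rewrites this as $\VdB(\fF)\comp\ntV[\cA_1]\hison\VdB(\fF')\comp\ntV[\cA_1]$ in $\ACat(\cA_1,\fdgA(\VdB(\cA_2)))$. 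Since $\ntV[\cA_1]$ is a quasi-equivalence by \autoref{dgAadj}, precomposition with it induces a quasi-equivalence of $A_\infty$ functor categories $\rcf{\ntV[\cA_1]}\colon\FunAc(\VdB(\cA_1),\VdB(\cA_2))\to\FunAc(\cA_1,\VdB(\cA_2))$ (a standard stability property for functor $A_\infty$ categories under pullback along quasi-equivalences), hence reflects isomorphisms in $H^0$. Therefore $\VdB(\fF)\hison\VdB(\fF')$; as both are dg functors between dg categories, this is a weak equivalence in $\ACatdg$. By \autoref{Genovese}, $\VdB(\fF)$ and $\VdB(\fF')$ have the same image in $\HoACatdg$, and, transporting back via the equivalence $\HoACatdg\iso\HoACat$ induced by $\fdgA$ (together with naturality of $\ntV$, which gives an isomorphism in $\HoACat$ between $\id_{\ACat}$ and $\fdgA\comp\VdB$), we conclude that $\fF$ and $\fF'$ coincide in $\HoACat$.

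For (B), by \autoref{ucueq} the categories $\QACat$ and $\QACatc$ are equivalent, so it suffices to check that a quasi-equivalence $\fF\in\ACat(\cA_1,\cA_2)$ is invertible in $\QACatc$. This is a classical Whitehead-type result for $A_\infty$ categories: a cohomologically unital $A_\infty$ quasi-equivalence admits a quasi-inverse up to isomorphism in $H^0$ of the functor $A_\infty$ category, which is precisely the statement that $\eqcl{\fF}$ is invertible modulo $\hison$. Thus the universal property of localization delivers $\Phi\colon\HoACat\to\QACat$.

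The principal obstacle is the technical content of step (A), namely the compatibility claim that precomposition with the quasi-equivalence $\ntV[\cA_1]$ induces a quasi-equivalence of $A_\infty$ categories of functors. Everything else is formal: the $A_\infty$ Whitehead theorem invoked in (B) is standard, and once (A) and (B) are in place, verifying that $\Psi\comp\Phi$ and $\Phi\comp\Psi$ are the identities amounts to observing that both composites fit into commutative triangles with the canonical projections out of $\ACat$, so uniqueness in the universal property forces them to coincide with $\id_{\HoACat}$ and $\id_{\QACat}$ respectively.
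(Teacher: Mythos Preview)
Your argument is correct and shares the paper's essential ingredients (\autoref{Genovese}, the $A_\infty$ Whitehead theorem from \cite{LH}, and \autoref{dgAadj}), but the organization differs. The paper first reduces \emph{both} sides to the dg setting: it shows $\HoACat\simeq\HoACatdg$ (via \autoref{ACatdg}) and $\QACat\simeq\QACatdg$ (using \autoref{dgAadj} to replace each $\cA$ by $\VdB(\cA)$, together with the Whitehead theorem), and then compares $\HoACatdg$ with $\QACatdg$ directly, where \autoref{Genovese} applies verbatim. You instead stay in $\ACat$ and transport only the crucial step of (A) to the dg world via $\VdB$, then pull back.

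The practical difference is that your route needs the additional lemma that $\rcf{\ntV[\cA_1]}\colon\FunAc(\VdB(\cA_1),\VdB(\cA_2))\to\FunAc(\cA_1,\VdB(\cA_2))$ is a quasi-equivalence (so that $H^0$ reflects isomorphisms). This is indeed a standard fact about precomposition with quasi-equivalences, but it is not proved in the paper and you correctly flag it as the main technical input. The paper's detour through $\QACatdg$ sidesteps this entirely: once both localizations are replaced by their dg avatars, \autoref{Genovese} yields the factorization $\ACatdg\to\QACatdg\to\HoACatdg$ with no further analysis of functor categories. So the paper's approach is a bit more self-contained, while yours is more direct in that it never introduces the auxiliary categories $\HoACatdg$ and $\QACatdg$.
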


\begin{proof}
By \autoref{ACatdg} $\HoACat$ is equivalent to $\HoACatdg$. Similarly, $\QACat$ is equivalent to $\QACatdg$. To see this, just observe that for every $\cA\in\ACat$ there exists a quasi-equivalence $\cA\to\cA'$ with $\cA'\in\ACatdg$ (thanks to \autoref{dgAadj}) and that the image of a quasi-equivalence of $\ACat$ is an isomorphism in $\QACat$ (by \cite[Th\'eor\`eme 9.2.0.4]{LH}). Thus it is enough to show that $\HoACatdg$ and $\QACatdg$ are equivalent. Now, we have just noted that the quotient functor $\ACatdg\to\QACatdg$ sends quasi-equivalences to isomorphisms, hence it induces a functor $\HoACatdg\to\QACatdg$. On the other hand, it follows from \autoref{Genovese} that the localization functor $\ACatdg\to\HoACatdg$ factors through a functor $\QACatdg\to\HoACatdg$. It is then clear that in this way we obtain two functors which are quasi-inverse equivalences of categories between $\HoACatdg$ and $\QACatdg$.
\end{proof}

\section{Internal Homs via $A_\infty$ functors}\label{sect:internalHoms}

This section is devoted to the proof of \autoref{thm:internalHoms}. Our approach requires the use of multifunctors and a slightly delicate analysis of the behaviour of the bar construction under tensor product.

\subsection{Multifunctors and a useful quasi-equivalence}\label{subsect:bifun}

We now prove some preliminary results which play an important role in our proof.

\begin{lem}\label{multifundg}
Given $\fM\in\ACatn(\cA_1,\dots,\cA_n,\cA)$ with $\cA_1,\dots,\cA_n\in\ACatn$ and $\cA\in\dgCatn$, there exists unique $\dgmf{\fM}\in\dgCatn(\coB(\red{\aug{\Bi(\cA_1)}\otimes\cdots\otimes\aug{\Bi(\cA_n)}}),\cA)$ such that $\fM=\dgmf{\fM}\comp\ncbi[\cA_1,\dots,\cA_n]$. Moreover, $\coB(\Bi(\fM))$ is a quasi-isomorphism if and only if $\dgmf{\fM}$ is a quasi-isomorphism.
\end{lem}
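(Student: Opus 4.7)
The plan is to deduce both statements from the two adjunctions already in hand: the bar--cobar adjunction of \autoref{barcobaradj} and the adjunction of \autoref{dgAnadj}, together with the fact (from \autoref{rmk:Bifun} and the discussion after it) that $\Bi$ is faithful on multifunctors and compatible with composition. Throughout, I will abbreviate $\cC:=\red{\aug{\Bi(\cA_1)}\otimes\cdots\otimes\aug{\Bi(\cA_n)}}\in\dgcoCatn$.

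For the first assertion, I would start by noting that, by the definition given right before \autoref{dgAnadj}, the multifunctor $\fM$ is encoded by a dg cofunctor $\Bi(\fM)\colon\cC\to\Bi(\cA)=\B(\cA)$. Applying the adjunction bijection of \autoref{barcobaradj} to $\Bi(\fM)$ then produces a unique dg functor
\[
\dgmf{\fM}\in\dgCatn(\coB(\cC),\cA)
\]
whose image under the adjunction is $\Bi(\fM)$. To translate this bijection into the identity $\fM=\dgmf{\fM}\comp\ncbi[\cA_1,\dots,\cA_n]$ on the multifunctor side, I would use that the unit of $\coB\dashv\B$ at $\cC$ is $\nbc[\cC]=\Bi(\ncbi[\cA_1,\dots,\cA_n])$ (this is exactly the formula recorded just after \autoref{barcobaradj}), so that the adjunction gives $\Bi(\fM)=\B(\dgmf{\fM})\comp\Bi(\ncbi[\cA_1,\dots,\cA_n])=\Bi(\dgmf{\fM}\comp\ncbi[\cA_1,\dots,\cA_n])$, the last equality by the compatibility of $\Bi$ with composition of multifunctors. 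Since $\Bi$ is faithful, this yields existence, and the same argument (run backwards) provides uniqueness of $\dgmf{\fM}$.

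For the second assertion, the key observation is the explicit form of the transpose under $\coB\dashv\B$: for any dg cofunctor $\varphi\colon\cC\to\B(\cA)$, its adjoint dg functor is $\ncb[\cA]\comp\coB(\varphi)$. Applied to $\varphi=\Bi(\fM)$ this gives
\[
\dgmf{\fM}=\ncb[\cA]\comp\coB(\Bi(\fM)).
\]
By \autoref{dgAnadj}, $\ncb[\cA]$ is a quasi-isomorphism, so one side of this composition is a quasi-isomorphism and a standard two-out-of-three argument (recalling that $\ncb[\cA]$ is in particular the identity on objects, so quasi-isomorphism here really does mean quasi-equivalence in the non-unital sense) shows that $\coB(\Bi(\fM))$ is a quasi-isomorphism if and only if $\dgmf{\fM}$ is.

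I expect the first part to be essentially formal once the adjunctions are invoked in the right order; the only mildly delicate point is to make sure that the identification $\nbc[\cC]=\Bi(\ncbi[\cA_1,\dots,\cA_n])$ is used correctly to bridge between the world of cofunctors (where the adjunction lives) and the world of multifunctors (where the lemma is stated). The main, but still minor, obstacle is checking the explicit formula $\dgmf{\fM}=\ncb[\cA]\comp\coB(\Bi(\fM))$ carefully, since the whole two-out-of-three conclusion depends on it.
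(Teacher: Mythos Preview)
Your argument is correct, and the first assertion is handled exactly as in the paper: pass from $\fM$ to $\Bi(\fM)\in\dgcoCatn(\cC,\B(\cA))$, apply the adjunction of \autoref{barcobaradj}, and translate back using $\nbc[\cC]=\Bi(\ncbi[\cA_1,\dots,\cA_n])$ together with the faithfulness of $\Bi$ on multifunctors.

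For the second assertion your route differs slightly from the paper's. You use the counit description of the adjoint, $\dgmf{\fM}=\ncb[\cA]\comp\coB(\Bi(\fM))$, and then a single two-out-of-three step with the quasi-isomorphism $\ncb[\cA]$. The paper instead uses the unit side: it factors $\coB(\Bi(\fM))=\VdBn(\dgmf{\fM})\comp\coB(\nbc[\cC])$, shows $\coB(\nbc[\cC])$ is a quasi-isomorphism (via $\ncb[\coB(\cC)]\comp\coB(\nbc[\cC])=\id$), and then argues separately that $\VdBn(\dgmf{\fM})$ is a quasi-isomorphism iff $\dgmf{\fM}$ is. Both arguments rest on the same ingredients from \autoref{dgAnadj}; yours is a bit more economical, while the paper's makes the intermediate object $\VdBn(\dgmf{\fM})$ explicit. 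Your parenthetical about $\ncb[\cA]$ being the identity on objects is unnecessary: two-out-of-three for quasi-isomorphisms follows directly from the corresponding statement for isomorphisms in $H(\farg)$.
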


\begin{proof}
Setting $\cC:=\red{\aug{\Bi(\cA_1)}\otimes\cdots\otimes\aug{\Bi(\cA_n)}}$, by definition $\Bi(\fM)\in\dgcoCatn(\cC,\B(\cA))$. By \autoref{barcobaradj} there exists unique $\dgmf{\fM}\in\dgCatn(\coB(\cC),\cA)$ such that $\Bi(\fM)=\B(\dgmf{\fM})\comp\nbc[\cC]$. As $\Bi$ is bijective and
\[
\B(\dgmf{\fM})\comp\nbc[\cC]=\Bi(\dgmf{\fM})\comp\Bi(\ncbi[\cA_1,\dots,\cA_n])=\Bi(\dgmf{\fM}\comp\ncbi[\cA_1,\dots,\cA_n]),
\]
this proves the first statement. As for the last one, observe that in any case $\coB(\Bi(\ncbi[\cA_1,\dots,\cA_n]))=\coB(\nbc[\cC])$ is a quasi-isomorphism (since $\ncb[\coB(\cC)]\comp\coB(\nbc[\cC])=\id_{\coB(\cC)}$ and $\ncb[\coB(\cC)]$ is a quasi-isomorphism, by \autoref{dgAnadj}). Therefore, $\coB(\Bi(\fM))=\coB(\Bi(\dgmf{\fM}))\comp\coB(\Bi(\ncbi[\cA_1,\dots,\cA_n]))$ is a quasi-isomorphism if and only if $\coB(\Bi(\dgmf{\fM}))=\VdBn(\dgmf{\fM})$ is a quasi-isomorphism. Finally, \autoref{dgAnadj} easily implies that $\VdBn(\dgmf{\fM})$ is a quasi-isomorphism if and only if $\dgmf{\fM}$ is a quasi-isomorphism.
\end{proof}

\begin{remark}\label{dercoder}
	For $\cA_1,\dots,\cA_n\in\ACatn$,  $\cA\in\dgCatn$ and $\fF_1,\fF_2\in\ACatn(\cA_1,\dots,\cA_n,\cA)$, it is easy to see that there is a natural isomorphism of dg $\K$-modules 
	\[
	\coDer(\Bi(\fF_1),\Bi(\fF_2))\iso\Der(\dgmf{\fF_1},\dgmf{\fF_2}).
	\]
\end{remark}

\begin{lem}\label{weakequiv}
Given $\cA_1,\cA_2\in\dgCatn$, there exists
\[
\fG\colon\red{\aug{\B(\cA_1)}\otimes\aug{\B(\cA_2)}}\to\B(\red{\aug{\cA_1}\otimes\aug{\cA_2}})
\]
in $\dgcoCatn$ which is the identity on objects and such that $\coB(\fG)$ is a quasi-isomorphism.
\end{lem}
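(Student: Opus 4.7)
The plan is to exhibit $\fG$ as the bar extension of a natural tensor-product $A_\infty$ bifunctor, and then use \autoref{multifundg} to reduce the quasi-isomorphism statement for $\coB(\fG)$ to one about a concrete dg functor, which I will handle by a filtration argument in the spirit of the last paragraph of the proof of \autoref{dgAadj}.

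First I would define the strict $A_\infty$ bifunctor $\fM\in\ACatn(\cA_1,\cA_2,\red{\aug{\cA_1}\otimes\aug{\cA_2}})$ associated with the tensor-product construction. It is the identity on objects, and its only non-zero higher components are
\[
\fM^{(1,0)}(f) = f\otimes 1,\qquad \fM^{(0,1)}(g) = 1\otimes g,\qquad \fM^{(1,1)}(f,g) = f\otimes g
\]
(up to the usual Koszul sign in the last one), while $\fM^{(k_1,k_2)}=0$ otherwise. That $\fM$ is a well-defined $A_\infty$ multifunctor is a direct check, formally analogous to verifying that the tensor product of two dg categories is itself a dg category. I then set $\fG:=\Bi(\fM)$; by the discussion in \autoref{subsec:barcobar} this gives a dg cofunctor
\[
\fG\colon\red{\aug{\B(\cA_1)}\otimes\aug{\B(\cA_2)}}\to\B(\red{\aug{\cA_1}\otimes\aug{\cA_2}})
\]
that is the identity on objects, as required.

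Next, by \autoref{multifundg}, $\coB(\fG)=\coB(\Bi(\fM))$ is a quasi-isomorphism if and only if the dg functor
\[
\dgmf{\fM}\colon\coB(\red{\aug{\B(\cA_1)}\otimes\aug{\B(\cA_2)}})\to\red{\aug{\cA_1}\otimes\aug{\cA_2}}
\]
attached to $\fM$ is a quasi-isomorphism. To show the latter, I would introduce on the source a ``total bar word length'' filtration, where the piece at level at most $p$ is spanned by cobar tensors $(T_1|\cdots|T_k)$ whose inner bar factors contain at most $p$ morphisms of $\cA_1$ and $\cA_2$ in total. The cobar differential respects this filtration, and on the associated graded the higher multiplications $\m{i}{\cA_j}$ for $i\ge 2$ drop out. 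One is therefore reduced to the ``linear'' case in which $\cA_1$ and $\cA_2$ carry only their differentials (trivial composition). In that case the source can be identified with a standard Koszul-type resolution of the target, and the resulting map becomes a shuffle/Eilenberg--Zilber quasi-isomorphism that follows by combining the Künneth theorem with the fact (from \autoref{dgAnadj}) that $\ncb$ is a quasi-isomorphism.

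The main obstacle I expect is organizing the filtration cleanly enough to identify the associated graded map with an explicit Künneth-type quasi-isomorphism, together with the sign and combinatorial bookkeeping needed to verify the compatibilities between the cobar differential, the filtration, and $\dgmf{\fM}$. Once these details are in place, the remainder is a formal application of the bar--cobar machinery developed earlier in the paper.
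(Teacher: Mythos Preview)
Your strategy—write $\fG$ as $\Bi$ of an explicit bifunctor, then reduce via \autoref{multifundg} to showing that $\dgmf{\fM}$ is a quasi-isomorphism—differs from the paper's. The paper works instead with admissible twisting cochains: it quotes \cite[Section~2.5.2]{LH} for an acyclic twisting cochain $\tc'\colon\Ba(\aug{\cA_1})\otimes\Ba(\aug{\cA_2})\to\aug{\VdBn(\cA_1)}\otimes\aug{\VdBn(\cA_2)}$, uses \cite[Proposition~2.2.4.1]{LH} to convert acyclicity into the statement that $\coBa$ of the induced cofunctor is a quasi-isomorphism, and then post-composes with $\aug{\ncb[\cA_1]}\otimes\aug{\ncb[\cA_2]}$. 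Thus the step you identify as the ``main obstacle'' is entirely outsourced to \cite{LH}. If one unwinds the paper's twisting cochain, the resulting bifunctor $\fN$ has only the components $\fN^{(1,0)}(f)=f\otimes1$ and $\fN^{(0,1)}(g)=1\otimes g$ (consistently with the description of its restrictions in the proof of \autoref{unittensor}); the corresponding $\fG$ is the classical shuffle cofunctor.

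There is, however, a concrete error in your $\fM$: the component $\fM^{(1,1)}(f,g)=f\otimes g$ cannot be part of an $A_\infty$ multifunctor. Already the degree is wrong—the $(k_1,k_2)$-component must have degree $1-k_1-k_2$, so $\fM^{(1,1)}$ would have to have degree $-1$, whereas $(f,g)\mapsto f\otimes g$ has degree $0$. Even setting that aside, the multifunctor relation fails at bidegree $(2,1)$: one side contributes a single term proportional to $(f_2f_1)\otimes g$ (from $\m{2}{\cA_1}$ followed by $\fM^{(1,1)}$), while the other produces both $(f_2\otimes1)(f_1\otimes g)$ and $(f_2\otimes g)(f_1\otimes1)$, whose relative sign depends on $|f_1||g|$ and so cannot collapse to a single term for all $f_1,g$. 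The fix is to set $\fM^{(1,1)}=0$, which recovers exactly the shuffle cofunctor; with that correction your outline via \autoref{multifundg} and a word-length filtration becomes viable, though carrying it out in detail essentially reproves the acyclicity input from \cite{LH} that the paper simply cites.
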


\begin{proof}
Equivalently, we have to prove that there exists $\aug{\fG}\colon\Ba(\aug{\cA_1})\otimes\Ba(\aug{\cA_2})\to\Ba(\aug{\cA_1}\otimes\aug{\cA_2})$ in $\dgcoCata$ which is the identity on objects and such that $\coBa(\aug{\fG})$ is a quasi-isomorphism.

We start by recalling that, as it is proved in \cite[Section 2.2.1]{LH}, for every $\cA\in\dgCata$, the natural map
\[
\utc{\cA}\colon\Ba(\cA)=\Tc{\sh{\red{\cA}}}\epi\sh{\red{\cA}}\isomor\red{\cA}\mono\cA
\]
(where $\sh{\red{\cA}}\isomor\red{\cA}$ is the obvious degree $1$ isomorphism) is an admissible twisting cochain (see \autoref{subsec:barcobar} for the definition of admissible twisting cochain). Moreover, $\utc{\cA}$ is universal among admissible twisting cochains with target $\cA$, meaning that for every admissible twisting cochain $\tc\colon\cC\to\cA$ (for some $\cC\in\dgcoCata$) there exists a unique morphism $\fG_{\tc}\colon\cC\to\Ba(\cA)$ in $\dgcoCata$ such that $\tc=\utc{\cA}\comp\fG_{\tc}$. 

By the argument in \cite[Section 2.5.2]{LH}, there exists an admissible twisting cochain 
\[
\tc'\colon\Ba(\aug{\cA_1})\otimes\Ba(\aug{\cA_2})\to\cA':=\aug{\VdBn(\cA_1)}\otimes\aug{\VdBn(\cA_2)}
\]
which is \emph{acyclic}. By \cite[Proposition 2.2.4.1]{LH} this means that $\coBa(\fG_{\tc'})$ is a quasi-isomorphism in $\dgCata$. Setting also $\cA:=\aug{\cA_1}\otimes\aug{\cA_2}$ and $\fF:=\aug{\ncb[\cA_1]}\otimes\aug{\ncb[\cA_1]}\colon\cA'\to\cA$ in $\dgCata$, the composition $\tc:=\fF\comp\tc'\colon\Ba(\aug{\cA_1})\otimes\Ba(\aug{\cA_2})\to\cA$ is again an admissible twisting cochain. Taking into account that $\fF\comp\utc{\cA'}=\utc{\cA}\comp\Ba(\fF)$, we obtain
\[
\utc{\cA}\comp\fG_{\tc}=\tc=\fF\comp\tc'=\fF\comp\utc{\cA'}\comp\fG_{\tc'}=\utc{\cA}\comp\Ba(\fF)\comp\fG_{\tc'},
\]
which implies that $\fG_{\tc}=\Ba(\fF)\comp\fG_{\tc'}\colon\Ba(\aug{\cA_1})\otimes\Ba(\aug{\cA_2})\to\Ba(\cA)$. We claim that we can take $\aug{\fG}=\fG_{\tc}$. Indeed, it is clear by construction that $\fG_{\tc}$ is the identity on objects. Moreover, $\coBa(\fG_{\tc})=\coBa(\Ba(\fF))\comp\coBa(\fG_{\tc'})$ is a quasi-isomorphism because both $\coBa(\fG_{\tc'})$ and $\coBa(\Ba(\fF))$ are quasi-isomorphisms. To see this last fact, notice that $\fF$ is a quasi-isomorphism by \autoref{dgAnadj}, hence the same is true for $\coBa(\Ba(\fF))$.
\end{proof}

By definition the dg cofunctor $\fG\colon\red{\aug{\B(\cA_1)}\otimes\aug{\B(\cA_2)}}\to\B(\red{\aug{\cA_1}\otimes\aug{\cA_2}})$ of \autoref{weakequiv} is of the form $\fG=\Bi(\fN)$ for a unique $A_\infty$ bifunctor
\begin{equation}\label{mainbifun}
\fN\in\ACatn(\cA_1,\cA_2,\red{\aug{\cA_1}\otimes\aug{\cA_2}}).
\end{equation}

\begin{prop}\label{mainqeq}
Given $\cA_1,\cA_2\in\dgCatn$ and $\cA\in\dgCatc$, the $A_\infty$ functor
\[
\rcf{\fN}\colon\FunAn(\red{\aug{\cA_1}\otimes\aug{\cA_2}},\cA)\to\FunAn(\cA_1,\cA_2,\cA)
\]
is a quasi-equivalence.
\end{prop}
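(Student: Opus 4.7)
The plan is to translate via the bar--cobar adjunction into a question about dg functors out of cobar constructions, and then exploit that $\coB(\fG) \colon \coB(\cC) \to \VdBn(\cB)$ is a quasi-isomorphism of dg categories; here $\cB := \red{\aug{\cA_1}\otimes\aug{\cA_2}}$, $\cC := \red{\aug{\B(\cA_1)}\otimes\aug{\B(\cA_2)}}$, $\fG = \Bi(\fN)$ is the dg cofunctor of \autoref{weakequiv}, and $\VdBn(\cB) = \coB(\B(\cB))$.

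First, the translation. By \autoref{multifundg}, the objects of $\FunAn(\cA_1,\cA_2,\cA)$ are in bijection (via $\fM \mapsto \dgmf{\fM}$) with dg functors $\coB(\cC) \to \cA$, and those of $\FunAn(\cB,\cA)$ with dg functors $\VdBn(\cB) \to \cA$. The naturality square $\nbc[\B(\cB)] \comp \fG = \B(\coB(\fG)) \comp \nbc[\cC]$, together with the faithfulness of $\B$, yields $\dgmf{\fF \comp \fN} = \dgmf{\fF} \comp \coB(\fG)$ for every $\fF \in \ACatn(\cB,\cA)$. Combining \autoref{natcoder} and \autoref{dercoder}, prenatural transformations become derivations compatibly with composition, so the action of $\rcf{\fN}$ on hom-complexes corresponds, under these identifications, to precomposition with the quasi-equivalence $\coB(\fG)$.

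Second, the quasi-isomorphism on hom-complexes. Equivalently, working on the coderivation side, one must show that for $\fF_1, \fF_2 \in \ACatn(\cB, \cA)$ the precomposition map $\coDer(\Bi(\fF_1), \Bi(\fF_2)) \to \coDer(\Bi(\fF_1) \comp \fG, \Bi(\fF_2) \comp \fG)$ is a quasi-isomorphism. By \autoref{cofree} both sides are Hom-complexes of graded quivers into $\sh\cA$ equipped with a twisted differential involving $d_\cA$, the differentials and comultiplications of the source cocategories, and the $\Bi(\fF_i)$. Filtering by tensor word-length on $\cC$ and on $\B(\cB)$ produces a map of spectral sequences whose $E_1$-comparison is induced by $\fG$ at the graded quiver level into $\sh\cA$; since $\coB(\fG)$ is a quasi-isomorphism of dg categories and both $\coB(\cC)$ and $\VdBn(\cB)$ are semifree cobar constructions, a diagram chase in the style of \cite[Section 1.3]{LH} (adapted from algebras to categories) forces the $E_1$-map, whence the original map, to be a quasi-isomorphism. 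This is the main obstacle of the proof; alternatively it can be obtained by invoking the cofibrancy of cobar constructions in the Tabuada model structure on $\dgCat$.

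Finally, essential surjectivity. Given $\fM \in \ACatn(\cA_1,\cA_2,\cA)$ we seek $\fH \in \ACatn(\cB,\cA)$ with $\fH \comp \fN \hison \fM$. On the dg side, $\coB(\fG)$ is a quasi-equivalence between cobar constructions, hence admits a dg homotopy inverse $\fR \colon \VdBn(\cB) \to \coB(\cC)$, necessarily the identity on objects, with $\fR \comp \coB(\fG) \htpn \id_{\coB(\cC)}$: an $A_\infty$ quasi-inverse exists by \autoref{dgAnadj}, and \autoref{Ahtpdg} lets one replace it up to $\htpn$ by an honest dg functor. Setting $\dgmf{\fH} := \dgmf{\fM} \comp \fR$ and letting $\fH \colon \cB \to \cA$ be the $A_\infty$ functor corresponding to $\dgmf{\fH}$ via \autoref{multifundg}, one obtains $\dgmf{\fH \comp \fN} = \dgmf{\fH} \comp \coB(\fG) \htpn \dgmf{\fM}$. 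Pulling this back via the identifications of the first two steps yields an $A_\infty$ homotopy between the multifunctors $\fH \comp \fN$ and $\fM$, which upgrades to $\fH \comp \fN \hison \fM$ by cohomological unitality of $\cA$, completing the proof.
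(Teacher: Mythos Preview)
Your overall architecture matches the paper's: translate via \autoref{multifundg} and \autoref{dercoder} to a statement about precomposition with the quasi-isomorphism $\coB(\fG)$ (equivalently $\dgmf{\fN}$), then treat essential surjectivity and fully faithfulness separately. Your essential surjectivity argument is correct, though a bit more circuitous than the paper's: rather than passing through $\VdBn(\cB)$ and manufacturing a \emph{dg} inverse via \autoref{Ahtpdg}, the paper simply takes an $A_\infty$ homotopy inverse $\fH$ of the quasi-isomorphism $\dgmf{\fN}\colon\coB(\cC)\to\cB$, sets $\fF:=\dgmf{\fM}\comp\fH$, and composes with $\ncbi[\cA_1,\cA_2]$.

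The genuine gap is in your second paragraph. You assert that a word-length filtration yields a spectral sequence whose $E_1$-map is a quasi-isomorphism ``because $\coB(\fG)$ is a quasi-isomorphism,'' and that a diagram chase ``in the style of \cite[Section 1.3]{LH}'' finishes things. Neither claim is substantiated: there is no direct passage from $\coB(\fG)$ being a quasi-isomorphism to the $E_1$-map on coderivation complexes being one, and Section~1.3 of \cite{LH} does not contain a result of this shape. Your fallback to ``cofibrancy of cobar constructions in the Tabuada model structure on $\dgCat$'' is also off target: that model structure lives on \emph{unital} dg categories, whereas here one is in $\dgCatn$; and in any case cofibrancy alone does not give invariance of derivation complexes under quasi-isomorphism without further input. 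The paper handles this step by translating to derivations via \autoref{dercoder}, reducing to the algebra case, and then proving a separate lemma (\autoref{derqiso}): if $f\colon A'\to A$ is a quasi-isomorphism of cofibrant objects in $\dgAlgn$, then $f^*\colon\Der(A,M)\to\Der(A',M)$ is a quasi-isomorphism for every dg $A$-bimodule $M$. The proof of that lemma has real content, using Hinich's representing bimodule $\md_A$ for $\Der(A,-)$ and the fact that for cofibrant $A$ the bimodule $\md_A$ is itself cofibrant, so that a quasi-isomorphism $\md^f$ between cofibrant bimodules is a homotopy equivalence. You should either supply an honest version of your spectral sequence argument or, more simply, invoke this derivation lemma as the paper does.
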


\begin{proof}
In order to prove that $H(\rcf{\fN})$ is essentially surjective we need to show that, for every $\fM\in\ACatn(\cA_1,\cA_2,\cA)$, there exists $\fF\in\ACatn(\red{\aug{\cA_1}\otimes\aug{\cA_2}},\cA)$ such that $\fM\hison\fF\comp\fN$. Setting $\cC:=\red{\aug{\B(\cA_1)}\otimes\aug{\B(\cA_2)}}$, by \autoref{multifundg} there exist unique $\dgmf{\fM}\in\dgCatn(\coB(\cC),\cA)$ and $\dgmf{\fN}\in\dgCatn(\coB(\cC),\red{\aug{\cA_1}\otimes\aug{\cA_2}})$ such that $\fM=\dgmf{\fM}\comp\ncbi[\cA_1,\cA_2]$ and $\fN=\dgmf{\fN}\comp\ncbi[\cA_1,\cA_2]$; moreover, $\dgmf{\fN}$ is a quasi-isomorphism because $\coB(\Bi(\fN))=\coB(\fG)$ is a quasi-isomorphism by \autoref{weakequiv}. Then there exists $\fH\in\ACatn(\red{\aug{\cA_1}\otimes\aug{\cA_2}},\coB(\cC))$ such that $\fH\comp\dgmf{\fN}\htpn\id_{\coB(\cC)}$, and we can take $\fF:=\dgmf{\fM}\comp\fH$. Indeed, we have $\fF\comp\dgmf{\fN}=\dgmf{\fM}\comp\fH\comp\dgmf{\fN}\htpn\dgmf{\fM}$, whence $\fF\comp\dgmf{\fN}\hison\dgmf{\fM}$. It follows that
\[
\fF\comp\fN=\fF\comp\dgmf{\fN}\comp\ncbi[\cA_1,\cA_2]\hison\dgmf{\fM}\comp\ncbi[\cA_1,\cA_2]=\fM,
\]
as required.

As for fully faithfulness of $H(\rcf{\fN})$, in view of \autoref{natcoder} we have to prove that
\[
\fG^*\colon\coDer(\Bi(\fF),\Bi(\fF'))\to\coDer(\Bi(\fF)\comp\fG,\Bi(\fF')\comp\fG)
\]
is a quasi-isomorphism, for every $\fF,\fF'\in\ACatn(\red{\aug{\cA_1}\otimes\aug{\cA_2}},\cA)$. By \autoref{dercoder} it is equivalent to show that
\[
\coB(\fG)^*\colon\Der(\dgmf{\fF},\dgmf{\fF'})\to\Der(\dgmf{\fF}\comp\coB(\fG),\dgmf{\fF'}\comp\coB(\fG))
\]
is a quasi-isomorphism. This last fact follows from \autoref{derqiso} below, taking into account that we can easily reduce to the case of algebras, and that $\coB(C)$ is cofibrant in $\dgAlgn$, for every $C\in\dgcoAlgn$.
\end{proof}

\begin{lem}\label{derqiso}
Let $f\colon A'\to A$ be a quasi-isomorphism in $\dgAlgn$ with $A',A$ cofibrant. Then $f^*\colon\Der(A,M)\to\Der(A',M)$ is a quasi-isomorphism, for every dg $A$-bimodule $M$. 
\end{lem}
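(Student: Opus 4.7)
The idea is to rewrite $\Der(A,M)$ as a Hom in the category of dg $A$-bimodules and reduce the claim to the homotopy-invariance of the cotangent complex. Let $\Omega_A$ be the kernel of the multiplication $\aug{A}\otimes\aug{A}\to\aug{A}$, regarded as a dg $A^e$-module; there is a canonical isomorphism of dg $\K$-modules
\[
\Der(A,M)\iso\Hom_{A^e}(\Omega_A,M),
\]
obtained by extending the universal derivation $a\mapsto 1\otimes a-a\otimes 1$. When $A$ is cofibrant in $\dgAlgn$, $\Omega_A$ is cofibrant as a dg $A^e$-module: the claim reduces to the semi-free case $A=T(V)$, in which $\Omega_A$ is free on $V$ as a graded $A^e$-module (with differential twisted by $d_A|_V$), and the general case follows by retracts.

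The map $f$ induces a canonical $A^e$-linear morphism $\phi_f\colon A^e\otimes_{(A')^e}\Omega_{A'}\to\Omega_A$, and the key step will be to show that $\phi_f$ is a quasi-isomorphism whenever $f$ is a quasi-isomorphism of cofibrant objects. This is precisely the homotopy-invariance of the cotangent complex for dg algebras: one has a fiber sequence $A^e\otimes^L_{(A')^e}L_{A'}\to L_A\to L_{A/A'}$ in which $L_{A/A'}$ is acyclic because $f$ is a quasi-isomorphism. By cofibrancy of $A$ and $A'$ one has $\Omega_A\simeq L_A$ and $\Omega_{A'}\simeq L_{A'}$, and $A^e\otimes^L_{(A')^e}\Omega_{A'}=A^e\otimes_{(A')^e}\Omega_{A'}$ (since $\Omega_{A'}$ is cofibrant over $(A')^e$, hence flat over the field $\K$), so that $\phi_f$ is a quasi-isomorphism.

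Granting this, both source and target of $\phi_f$ are cofibrant dg $A^e$-modules (the source because cofibrancy is preserved by base change along $(A')^e\to A^e$), so applying $\Hom_{A^e}(-,M)$ will send $\phi_f$ to a quasi-isomorphism. The tensor-Hom adjunction then identifies $\Hom_{A^e}(A^e\otimes_{(A')^e}\Omega_{A'},M)$ with $\Hom_{(A')^e}(\Omega_{A'},M)\iso\Der(A',M)$, and the resulting composite coincides with $f^*$. The main obstacle is the homotopy-invariance step for $\phi_f$; the required cotangent-complex machinery is developed in the augmented setting in \cite{LH}, and must be transported to the non-unital framework through the equivalence between $\dgAlgn$ and the augmented category.
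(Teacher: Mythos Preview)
Your proposal is correct and follows essentially the same strategy as the paper: represent $\Der(A,-)$ by the bimodule of K\"ahler differentials, use cofibrancy of $A$ and $A'$ to get cofibrancy of the differentials as bimodules, show the induced comparison map $A^e\otimes_{(A')^e}\Omega_{A'}\to\Omega_A$ is a quasi-isomorphism, and conclude by applying $\Hom_{A^e}(-,M)$ to a quasi-isomorphism between cofibrant objects. The paper delegates each of these steps to Hinich's operadic results \cite[Section~7]{H} rather than sketching them, and your parenthetical ``hence flat over the field $\K$'' is superfluous (cofibrancy of $\Omega_{A'}$ over $(A')^e$ is what you actually use), but the argument is the same.
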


\begin{proof}
We will use some results proved in \cite[Section 7]{H} in a more general operadic context.\footnote{Notice that, in the case of algebras over an associative operad, the notion of module must be interpreted as bimodule in the classical sense (see \cite[Proposition 12.3.1]{LV}). This explains why, throughout \cite[Section 7]{H}, modules (and not bimodules) are used.} First, for every $A\in\dgAlgn$, there exists a dg $A$-bimodule $\md_A$ representing the functor $\Der(A,\farg)$ (by \cite[7.2.2]{H})\footnote{In order to avoid confusion with cobar, we do not adopt the standard notation $\Omega_A$.} Moreover, every morphism $f\colon A'\to A$ in $\dgAlgn$ induces a morphism of dg $A$-bimodules $\md^f\colon\md_{A'}\otimes_{A'}A\to\md_A$ (by \cite[7.2.3]{H}), and it is easy to see that $f^*\colon\Der(A,M)\to\Der(A',M)$ can be identified with
\[
\dgMod{A}(\md^f,M)\colon\dgMod{A}(\md_A,M)\to\dgMod{A'}(\md_{A'},M)\iso\dgMod{A}(\md_{A'}\otimes_{A'}A,M),
\]
for every dg $A$ bimodule $M$. By \cite[7.3.3]{H}, the fact that $A',A$ are cofibrant in $\dgAlgn$ implies that $\md_{A'}$ is cofibrant in $\dgMod{A'}$ (whence $\md_{A'}\otimes_{A'}A$ is cofibrant in $\dgMod{A}$) and $\md_A$ is cofibrant in $\dgMod{A}$. Finally, by \cite[7.3.6]{H}, $\md^f$ is a quasi-isomorphism when $f$ is a quasi-isomorphism and $A',A$ are cofibrant. Thus, with our assumptions, $\md^f$ is a quasi-isomorphism between cofibrant dg $A$-bimodules, hence it is a homotopy equivalence. It follows that also $\dgMod{A}(\md^f,M)$ is a homotopy equivalence, and in particular a quasi-isomorphism.
\end{proof}

\subsection{Proof of \autoref{thm:internalHoms}}

Let $\cA_1$, $\cA_2$ and $\cA_3$ be dg categories. Consider the following diagram of natural bijections of sets:
\begin{equation}\label{eqn:quadratone}
\xymatrix{\Hqe(\cA_1\otimes\cA_2,\cA_3) \ar@{<->}[d]^-{1:1}_-{(\clubsuit)} & & \Hqe(\cA_1,\FunA(\cA_2,\cA_3)) \ar@{<->}[d]_-{1:1}^-{(\spadesuit)} \\
\QACatc(\cA_1\otimes\cA_2,\cA_3) \ar@{<->}[d]^-{1:1}_-{(\heartsuit)} & &  \ar@{<->}[d]_-{1:1}^-{(\blacklozenge)} \Hqe(\cA_1,\FunAc(\cA_2,\cA_3)) \\
\QACatc(\cA_1,\cA_2,\cA_3) \ar@{<->}[rr]^-{1:1}_-{(\bigstar)} & & \QACatc(\cA_1,\FunAc(\cA_2,\cA_3)),}
\end{equation}
where the existence of the bijections $(\clubsuit)$ and $(\blacklozenge)$ follow from \autoref{thm:localizations}, while the bijections $(\spadesuit)$ and $(\bigstar)$ are easy consequences of \autoref{lem:qefun} and of the following result, respectively.

\begin{prop}\label{prop:allqe}
For every $\cA_1,\cA_2,\cA\in\ACatc$ there is an isomorphism in $\ACatc$
\[
\FunAc(\cA_1,\cA_2,\cA)\iso\FunAc(\cA_1,\FunAc(\cA_2,\cA)).
\]
\end{prop}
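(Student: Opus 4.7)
The plan is to construct an explicit strict isomorphism of $A_\infty$ categories
\[
\Phi \colon \FunAc(\cA_1, \cA_2, \cA) \to \FunAc(\cA_1, \FunAc(\cA_2, \cA))
\]
given by currying inputs. On objects: given $\fF \in \ACatc(\cA_1, \cA_2, \cA)$, I would define $\Phi(\fF)\colon \cA_1 \to \FunAc(\cA_2, \cA)$ by $\Phi(\fF)^0(A_1) := \fF\rest{(A_1, \cA_2)}$ (which is cohomologically unital by hypothesis on $\fF$), while the higher components $\Phi(\fF)^k$ for $k\geq 1$ rearrange the mixed components of $\fF$: evaluated on a $k$-tuple of morphisms in $\cA_1$, $\Phi(\fF)^k$ is the prenatural transformation of $A_\infty$ functors $\cA_2\to\cA$ whose $j$-th slot (for each $j\geq 0$) is the piece of $\fF$ that acts on the given $k$-tuple in the $\cA_1$-slots and on a varying $j$-tuple in the $\cA_2$-slots. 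On morphism complexes, $\Phi$ acts via the standard tensor--hom adjunction of graded $\K$-vector spaces applied componentwise to prenatural transformations: a degree-$d$ element of $\FunAc(\cA_1,\cA_2,\cA)(\fF_1,\fF_2)$ is a collection indexed by pairs $(k_1,k_2)\in\NN^2$, and currying reinterprets it as a collection indexed by $k_1$ with values in prenatural transformations between the $A_\infty$ functors $\Phi(\fF_1)^0(A_1)$ and $\Phi(\fF_2)^0(A_1')$.

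Next, I would declare $\Phi$ to be strict ($\Phi^i = 0$ for $i\geq 2$) with $\Phi^1$ the bijective map just described. The symmetric uncurrying $\Psi$ then furnishes a strict inverse, and the identities $\Phi\comp\Psi = \id$, $\Psi\comp\Phi = \id$ are immediate from bijectivity of tensor--hom at each level. What remains is to verify that $\Phi^1$ intertwines all the higher multiplications on both sides and that $\Phi$ is cohomologically unital. For the differential, I would use \autoref{natcoder} to identify prenatural transformations with coderivations: the differential in $\FunAc$ becomes the graded commutator with the appropriate bar differential, so the required compatibility reduces to the fact that the bar differential on $\red{\aug{\Bi(\cA_1)}\otimes\aug{\Bi(\cA_2)}}$ satisfies the graded Leibniz rule across the two tensor factors---precisely what the tensor--hom adjunction encodes. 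For the higher multiplications, the explicit slotwise formula for multiplication of prenatural transformations (see \cite[Section 8]{BLM}) manifestly commutes with currying. Cohomological unitality of $\Phi$ is then automatic since $\Phi^1$ is bijective on morphism complexes and the identity prenatural transformations on each side correspond to each other.

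The main obstacle will be the sign bookkeeping in the compatibility with the differential: the shifts intrinsic to the bar construction and the Koszul sign rule appearing in \eqref{eqn:funrel} must align consistently under the currying bijection, which amounts to a direct but tedious verification. Its analogue for $A_\infty$ algebras is essentially carried out in \cite{LH}, and the multi-object extension proceeds formally once a consistent ordering of inputs is fixed. Alternatively, one may argue more conceptually by observing that $\red{\aug{\Bi(\cA_1)}\otimes\aug{\Bi(\cA_2)}}$ plays the role of an ``internal tensor product'' within $\dgcoCatn$ on the image of $\Bi\colon\ACatn\to\dgcoCatn$, so that the desired isomorphism becomes a manifestation of the closed structure available on dg cocategories together with the full faithfulness of $\Bi$.
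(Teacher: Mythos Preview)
Your proposal is correct and is precisely the construction the paper invokes: the paper's proof consists of a citation to \cite[Proposition 9.5]{BLM} together with \cite[Proposition 4.12]{BLM}, and those references establish exactly the currying isomorphism you describe (this is the closed multicategory structure on cohomologically unital $A_\infty$ categories). So the approaches coincide; you are simply spelling out what the cited results say.

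One point deserves slightly more care than you give it. You check that $\Phi(\fF)^0(A_1)=\fF\rest{(A_1,\cA_2)}$ lands in $\FunAc(\cA_2,\cA)$, but you must also verify that $\Phi(\fF)$ itself is a \emph{cohomologically unital} $A_\infty$ functor $\cA_1\to\FunAc(\cA_2,\cA)$, i.e.\ that the class of $\Phi(\fF)^1(\id_{A_1})$ is the identity of $\fF\rest{(A_1,\cA_2)}$ in $H^0(\FunAc(\cA_2,\cA))$. This is not formal from bijectivity of $\Phi^1$; it uses the cohomological unitality of the \emph{other} family of restrictions $\fF\rest{(\cA_1,A_2)}$, and is exactly the content of \cite[Proposition 9.13]{BLM} (the equivalence, noted in the paper's footnote, between the paper's definition of cohomologically unital multifunctor via restrictions and the one used in \cite{BLM}). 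Once that is in hand, the inverse correspondence on objects and the strict compatibility with the $A_\infty$ structure go through as you outline.
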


\begin{proof}
It follows from \cite[Proposition 9.5]{BLM} together with \cite[Proposition 4.12]{BLM}.
\end{proof}

The rest of the section is devoted to proving that there is a natural bijection $(\heartsuit)$, since this would yield a natural bijection
\[
\xymatrix{
\Hqe(\cA_1\otimes\cA_2,\cA_3) \ar@{<->}[r]^-{1:1} & \Hqe(\cA_1,\FunA(\cA_2,\cA_3)),
}
\]
as claimed in \autoref{thm:internalHoms}.

\medskip

We start by defining two natural morphisms
\[
\finc\colon\cA_1\otimes\cA_2\to\red{\aug{\cA_1}\otimes\aug{\cA_2}}, \qquad \fpro\colon\red{\aug{\cA_1}\otimes\aug{\cA_2}}\to\cA_1\otimes\cA_2
\]
in $\dgCatn$. While $\finc$ is simply the natural inclusion, $\fQ$ is the restriction to $\red{\aug{\cA_1}\otimes\aug{\cA_2}}$ of $\fQ_1\otimes\fQ_2$, where $\fQ_i\colon\aug{\cA_i}\to\cA_i$ (for $i=1,2$) is the unique extension of $\id_{\cA_i}$ to a (unital) dg functor. It is obvious that $\fpro\comp\finc=\id_{\cA_1\otimes\cA_2}$. Although $\finc$ is really non-unital, we have the following easy result.

\begin{lem}\label{unittensor}
With the above notation, $\fpro$ is a (unital) dg functor and the $A_\infty$ multifunctor $\fpro\comp\fN\in\ACatn(\cA_1,\cA_2,\cA_1\otimes\cA_2)$ (where $\fN$ is the $A_\infty$ bifunctor defined in \eqref{mainbifun}) is cohomologically unital.
\end{lem}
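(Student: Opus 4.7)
The first assertion is essentially formal. The tensor product $\fQ_1\otimes\fQ_2\colon\aug{\cA_1}\otimes\aug{\cA_2}\to\cA_1\otimes\cA_2$ is a unital dg functor, and the augmentation $\augmor{\aug{\cA_1}\otimes\aug{\cA_2}}=\augmor{\aug{\cA_1}}\otimes\augmor{\aug{\cA_2}}$ is strictly unital, so its kernel $\red{\aug{\cA_1}\otimes\aug{\cA_2}}$ is a (non-unital) dg subcategory of $\aug{\cA_1}\otimes\aug{\cA_2}$, and $\fpro$ is its restriction. Although the source is non-unital, the distinguished element $\id_{A_1}\otimes\id_{A_2}\in\red{\aug{\cA_1}\otimes\aug{\cA_2}}((A_1,A_2),(A_1,A_2))$ is sent by $\fpro$ to the unit $\id_{(A_1,A_2)}$ of $\cA_1\otimes\cA_2$, and the slice elements $1_{A_1}\otimes g$ and $f\otimes 1_{A_2}$ are sent to $\id_{A_1}\otimes g$ and $f\otimes\id_{A_2}$ respectively; this explains the ``(unital)'' qualifier.

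For the second assertion, by the definition of cohomologically unital $A_\infty$ multifunctor I only need to check that, for every choice of $A_1\in\cA_1$ and $A_2\in\cA_2$, the restrictions of $\fpro\comp\fN$ to $(\cA_1,A_2)$ and $(A_1,\cA_2)$ are cohomologically unital $A_\infty$ functors. Since the target $\cA_1\otimes\cA_2$ is a dg category, this reduces to the two identities
\[
\fpro(\fN^{0,1}(A_1;\id_{A_2}))=\id_{(A_1,A_2)},\qquad\fpro(\fN^{1,0}(A_2;\id_{A_1}))=\id_{(A_1,A_2)}
\]
(in cohomology, though we will obtain them on the nose). Hence the key step is to identify the linear components $\fN^{0,1}$ and $\fN^{1,0}$ explicitly.

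To do so, I would unwind the construction in \autoref{weakequiv}: $\Bi(\fN)=\fG$ and $\aug{\fG}=\fG_\tc$ is characterized by the universal property $\utc{\aug{\cA_1}\otimes\aug{\cA_2}}\comp\fG_\tc=\tc$ with $\tc=\fF\comp\tc'$. By \autoref{quivadj}, the value of $\fN^{0,1}(A_1;g)$ is recovered by evaluating $\tc$ on the slice generator $1_{A_1}\otimes\sh{g}\in\aug{\Bi(\cA_1)}\otimes\aug{\Bi(\cA_2)}$, which automatically lies in the reduction because $\augmor{}(\sh{g})=0$. The acyclic universal twisting cochain $\tc'$ of \cite[Section~2.5.2]{LH} is assembled from the natural twisting cochains of each bar factor in a way that respects the tensor decomposition: on the slice $\{1_{A_1}\}\otimes\aug{\Bi(\cA_2)}$ it reduces to $1_{A_1}$ tensored with the natural twisting cochain $\aug{\Bi(\cA_2)}\to\aug{\VdBn(\cA_2)}$. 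Composing with $\fF=\aug{\ncb[\cA_1]}\otimes\aug{\ncb[\cA_2]}$ (which preserves $1_{A_1}$, being strictly unital) and using that $\aug{\ncb[\cA_2]}$ restricted to the linear part acts as the identity of $\cA_2$ (via the triangle identity of \autoref{dgAnadj}), one deduces $\fN^{0,1}(A_1;g)=1_{A_1}\otimes g$. A symmetric analysis yields $\fN^{1,0}(A_2;f)=f\otimes 1_{A_2}$.

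Applying $\fpro$ then gives $\fpro(\fN^{0,1}(A_1;g))=\id_{A_1}\otimes g$ and $\fpro(\fN^{1,0}(A_2;f))=f\otimes\id_{A_2}$; specializing $g=\id_{A_2}$ or $f=\id_{A_1}$ produces $\id_{(A_1,A_2)}$ on the nose, so each restriction of $\fpro\comp\fN$ is strictly unital in its linear component and in particular cohomologically unital. The main technical obstacle is the careful bookkeeping required to trace the explicit tensor formula for $\tc'$ from \cite[Section~2.5.2]{LH} on the distinguished slices, keeping the two layers of unit elements $1_{A_i}$ (those arising from augmenting $\Bi(\cA_i)$ and those from augmenting $\cA_i$) properly separated and matching them correctly through $\fF$.
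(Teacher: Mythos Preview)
Your treatment of the second assertion is essentially the same as the paper's: you identify the restrictions of $\fN$ as $f\mapsto f\otimes 1_{A_2}$ and $g\mapsto 1_{A_1}\otimes g$, then observe that $\fpro$ turns these into the honest unital dg functors $f\mapsto f\otimes\id_{A_2}$ and $g\mapsto\id_{A_1}\otimes g$. The paper states this directly (``from the definition of $\fN$ it is easy to see\ldots''), while you unwind the twisting cochain from \autoref{weakequiv} more explicitly; either way the conclusion is the same.

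However, your first paragraph contains a genuine gap. You write that the source $\red{\aug{\cA_1}\otimes\aug{\cA_2}}$ is non-unital and then try to justify the ``(unital)'' qualifier by saying that $\fpro$ happens to send $\id_{A_1}\otimes\id_{A_2}$ to the identity. That is not what the lemma asserts: for $\fpro$ to be a unital dg functor, the source must itself lie in $\dgCat$, and $\fpro$ must send \emph{its} identity morphism to the identity. The element $\id_{A_1}\otimes\id_{A_2}$ is \emph{not} the identity in $\red{\aug{\cA_1}\otimes\aug{\cA_2}}$; for instance $(\id_{A_1}\otimes\id_{A_2})\cdot(1_{A_1}\otimes g)=\id_{A_1}\otimes g\neq 1_{A_1}\otimes g$. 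The paper fills this gap by exhibiting the correct identity element
\[
\id_{A_1}\otimes1_{A_2}-\id_{A_1}\otimes\id_{A_2}+1_{A_1}\otimes\id_{A_2}
\]
in $\red{\aug{\cA_1}\otimes\aug{\cA_2}}$, checking that $\fpro$ sends each of the three summands to $\id_{A_1}\otimes\id_{A_2}$, and hence sends the whole expression to $\id_{(A_1,A_2)}$. You should replace your first paragraph with this computation.
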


\begin{proof}
A direct computation shows that $\id_{A_1}\otimes1_{A_2}-\id_{A_1}\otimes\id_{A_2}+1_{A_1}\otimes\id_{A_2}$ is the identity of $(A_1,A_2)$ in $\red{\aug{\cA_1}\otimes\aug{\cA_2}}$ (for every $A_1\in\cA_1$ and every $A_2\in\cA_2$), hence $\red{\aug{\cA_1}\otimes\aug{\cA_2}}\in\dgCat$. Moreover, the fact that $\fpro$ maps each of $\id_{A_1}\otimes1_{A_2}$, $\id_{A_1}\otimes\id_{A_2}$ and $1_{A_1}\otimes\id_{A_2}$ to $\id_{A_1}\otimes\id_{A_2}$, which is the identity of $(A_1,A_2)$ in $\cA_1\otimes\cA_2$, clearly implies that $\fpro$ is unital.

On the other hand, by definition $\fpro\comp\fN$ is cohomologically unital if and only if its restrictions $(\fpro\comp\fN)\rest{(\cA_1,A_2)}$ (for every $A_2\in\cA_2$) and $(\fpro\comp\fN)\rest{(A_1,\cA_2)}$ (for every $A_1\in\cA_1$) are cohomologically unital. Now, from the definition of $\fN$ it is easy to see that $\fN\rest{(\cA_1,A_2)}$ is the non-unital dg functor $\cA_1\to\red{\aug{\cA_1}\otimes\aug{\cA_2}}$ defined on objects by $A_1\mapsto(A_1,A_2)$ and on morphisms by $f\mapsto f\otimes1_{A_2}$. It follows that $(\fpro\comp\fN)\rest{(\cA_1,A_2)}=\fpro\comp\fN\rest{(\cA_1,A_2)}$ is the (unital) dg functor $\cA_1\to\cA_1\otimes\cA_2$ defined on objects by $A_1\mapsto(A_1,A_2)$ and on morphisms by $f\mapsto f\otimes\id_{A_2}$. Similarly for the restrictions $(\fpro\comp\fN)\rest{(A_1,\cA_2)}$.
\end{proof}

As a consequence, we obtain an $A_\infty$ functor
\[
\rcf{\fpro\comp\fN}\colon\FunAc(\cA_1\otimes\cA_2,\cA_3)\to\FunAc(\cA_1,\cA_2,\cA_3).
\]
Then $H^0(\rcf{\fpro\comp\fN})$ induces a natural map of sets
\[
\Psi\colon\QACatc(\cA_1\otimes\cA_2,\cA_3)\to\QACatc(\cA_1,\cA_2,\cA_3),
\]
sending the equivalence class of $\fF\in\ACatc(\cA_1\otimes\cA_2,\cA_3)$ to the equivalence class of $\fF\comp\fpro\comp\fN\in\ACatc(\cA_1,\cA_2,\cA_3)$. In order to conclude the proof of \autoref{thm:internalHoms}, it is therefore enough to prove the following result.

\begin{prop}\label{mainbij}
The map $\Psi$ is bijective.
\end{prop}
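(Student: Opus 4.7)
The plan is to prove injectivity and surjectivity of $\Psi$ separately, each time crucially invoking \autoref{mainqeq}, which states that $\rcf{\fN}$ is a quasi-equivalence on the non-unital side.

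For injectivity, suppose $\fF_1\comp\fpro\comp\fN\hison\fF_2\comp\fpro\comp\fN$ for some $\fF_1,\fF_2\in\ACatc(\cA_1\otimes\cA_2,\cA_3)$. Since $\FunAc$ sits inside $\FunAn$ as a full $A_\infty$ subcategory, this is a weak equivalence already in $\FunAn(\cA_1,\cA_2,\cA_3)$, so full-faithfulness of $H^0(\rcf{\fN})$ yields $\fF_1\comp\fpro\hison\fF_2\comp\fpro$ in $\FunAn(\red{\aug{\cA_1}\otimes\aug{\cA_2}},\cA_3)$. Applying $\rcf{\finc}$ and using $\fpro\comp\finc=\id$ then gives $\fF_1\hison\fF_2$, hence $[\fF_1]=[\fF_2]$ in $\QACatc(\cA_1\otimes\cA_2,\cA_3)$.

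For surjectivity, given $\fM\in\ACatc(\cA_1,\cA_2,\cA_3)$, essential surjectivity of $H^0(\rcf{\fN})$ provides $\fH\in\ACatn(\red{\aug{\cA_1}\otimes\aug{\cA_2}},\cA_3)$ with $\fH\comp\fN\hison\fM$. I set $\fF:=\fH\comp\finc$ and verify (a) that $\fF$ is cohomologically unital, and (b) that $\fF\comp\fpro\comp\fN\hison\fM$. The key calculation for (a): cohomological unitality of $\fH\comp\fN$ (preserved by $\hison$) combined with the explicit form of $\fN\rest{(\cA_1,A_2)}$ and $\fN\rest{(A_1,\cA_2)}$ recorded in the proof of \autoref{unittensor} forces $[\fH^1(\id_{A_1}\otimes1_{A_2})]=[\fH^1(1_{A_1}\otimes\id_{A_2})]=\id_{\fH(A_1,A_2)}$ in $H^0(\cA_3)$. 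Using the composition identity $(\id_{A_1}\otimes1_{A_2})\comp(1_{A_1}\otimes\id_{A_2})=\id_{A_1}\otimes\id_{A_2}$ in $\red{\aug{\cA_1}\otimes\aug{\cA_2}}$ together with the $A_\infty$ relation for $\fH$ at $n=2$, I conclude $[\fH^1(\id_{A_1}\otimes\id_{A_2})]=\id$, which is (a) because $\fF^1$ agrees with $\fH^1$ on $\cA_1\otimes\cA_2$. For (b), since $\fH\comp\fN\hison\fM$ it suffices to prove $\fH\comp\finc\comp\fpro\comp\fN\hison\fH\comp\fN$; by full-faithfulness of $H^0(\rcf{\fN})$, this in turn reduces to $\fH\comp\finc\comp\fpro\hison\fH$ in $\FunAn(\red{\aug{\cA_1}\otimes\aug{\cA_2}},\cA_3)$.

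The main obstacle is establishing this last weak equivalence. I plan to exhibit it via a prenatural transformation $\tau\colon\fH\comp\finc\comp\fpro\to\fH$ whose $0$-component at $(A_1,A_2)$ is $\fH^1(\id_{A_1}\otimes\id_{A_2})$, and whose higher components $\tau^i$ are defined by inserting the cycle $\id_{A_1}\otimes\id_{A_2}$ into the higher operations of $\fH$ so as to cancel the boundary terms produced by the $A_\infty$ relations. Invertibility of $\tau$ in $H^0(\FunAn)$ is immediate from $[\tau^0_{(A_1,A_2)}]=[\fH^1(\id_{A_1}\otimes\id_{A_2})]=\id$ established in (a). The verification that $\tau$ is a degree-$0$ cycle in $\Nat(\fH\comp\finc\comp\fpro,\fH)$ reduces, through the $A_\infty$ relations for $\fH$, to the identities $h\comp(\id_{A_1}\otimes\id_{A_2})=\finc\comp\fpro(h)$ in $\red{\aug{\cA_1}\otimes\aug{\cA_2}}$, which are checked directly on the three generator types $f\otimes g$, $1_{A_1}\otimes g$, $f\otimes1_{A_2}$. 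The technical difficulty is the careful bookkeeping of signs and of the higher components $\tau^i$ to make the prenatural-transformation axiom hold on the nose rather than merely up to homotopy.
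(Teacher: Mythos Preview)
Your argument is correct and follows the same overall route as the paper: injectivity via \autoref{mainqeq} and $\fpro\comp\finc=\id$, surjectivity by lifting $\fM$ through $\rcf{\fN}$ to some $\fH$, setting $\fF:=\fH\comp\finc$, and then reducing everything to the weak equivalence $\fH\comp\finc\comp\fpro\hison\fH$.

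The one place where the paper is noticeably slicker is precisely where you anticipate ``careful bookkeeping of signs and of the higher components $\tau^i$''. Rather than build $\tau$ by hand inside $\Nat(\fH\comp\finc\comp\fpro,\fH)$, the paper observes that the whole construction factors through a \emph{dg} natural transformation $T$ between the \emph{dg} functors $\id_{\red{\aug{\cA_1}\otimes\aug{\cA_2}}}$ and $\finc\comp\fpro$, given simply by $T_{(A_1,A_2)}=\id_{A_1}\otimes\id_{A_2}$. The naturality of $T$ is exactly the elementary identity you isolate (together with its left-hand analogue $(\id_{A_1'}\otimes\id_{A_2'})\comp(\finc\comp\fpro)(h)=\finc\comp\fpro(h)$, which is equally trivial). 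One then obtains the desired closed degree-$0$ prenatural transformation as $\widetilde{T}:=\fH\comp T$ via the standard composition of an $A_\infty$ functor with a (strict) natural transformation as in \cite[Section~1e]{Sei}; this packages all of your ``insert $\id_{A_1}\otimes\id_{A_2}$ into the higher operations of $\fH$'' in one stroke and makes the closedness automatic, with no sign bookkeeping required. Invertibility in $H^0$ then follows from \cite[Lemma~1.6]{Sei} exactly as you say. So your plan is right; you can shortcut the hard part by first producing $T$ at the dg level and then composing with $\fH$.
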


\begin{proof}
Given $\fF_1,\fF_2\in\ACatc(\cA_1\otimes\cA_2,\cA_3)$ such that $\fF_1\comp\fpro\comp\fN\hison\fF_2\comp\fpro\comp\fN$, it follows from \autoref{mainqeq} that $\fF_1\comp\fpro\hison\fF_2\comp\fpro$. Since $\fpro\comp\finc=\id_{\cA_1\otimes\cA_2}$, we obtain $\fF_1=\fF_1\comp\fpro\comp\finc\hison\fF_2\comp\fpro\comp\finc=\fF_2$, which proves that $\Psi$ is injective.

In order to prove that $\Psi$ is surjective we need to show that, for every $\fM\in\ACatc(\cA_1,\cA_2,\cA_3)$, there exists $\fF\in\ACatc(\cA_1\otimes\cA_2,\cA_3)$ such that $\fM\hison\fF\comp\fpro\comp\fN$. Again by \autoref{mainqeq}, there exists $\fF'\in\ACatn(\red{\aug{\cA_1}\otimes\aug{\cA_2}},\cA_3)$ such that $\fM\hison\fF'\comp\fN$, and we claim that $\fF'$ is actually cohomologically unital. To see this, notice that, since $\fF'\comp\fN$ is cohomologically unital, the same is true for its restrictions. Remembering the explicit description of the restrictions of $\fN$ given in the proof of \autoref{unittensor}, this implies that
\begin{equation}\label{id1}
H(\fF')(\id_{A_1}\otimes1_{A_2})=H(\fF')(1_{A_1}\otimes\id_{A_2})=\id_{\fF'(A_1,A_2)}
\end{equation}
for every $A_1\in\cA_1$ and every $A_2\in\cA_2$. As $\id_{A_1}\otimes\id_{A_2}=(\id_{A_1}\otimes1_{A_2})\comp(1_{A_1}\otimes\id_{A_2})$, we also have
\begin{equation}\label{idid}
H(\fF')(\id_{A_1}\otimes\id_{A_2})=\id_{\fF'(A_1,A_2)}.
\end{equation}
From \eqref{id1} and \eqref{idid} we immediately deduce that $H(\fF')$ is unital.

To conclude, it is then enough to prove that $\fF'\hison\fF'\comp\finc\comp\fpro$. Indeed, assuming this, $\fF:=\fF'\comp\finc\in\ACatn(\cA_1\otimes\cA_2,\cA_3)$ clearly satisfies 
\[
\fF\comp\fpro\comp\fN=\fF'\comp\finc\comp\fpro\comp\fN\hison\fF'\comp\fN\hison\fM.
\]
Moreover, $\fF$ is cohomologically unital because both $\fpro$ (by \autoref{unittensor}) and $\fF\comp\fpro\hison\fF'$ are cohomologically unital and $\fpro$ is surjective on objects.

Now let us prove that $\fF'\hison\fF'\comp\finc\comp\fpro$. It is straightforward to check that there is a (closed, degree $0$) dg natural transformation $T$ between the dg functors $\id_{\red{\aug{\cA_1}\otimes\aug{\cA_2}}}$ and $\finc\comp\fpro$ defined by $T_{(A_1,A_2)}:=\id_{A_1}\otimes\id_{A_2}$ for every $(A_1,A_2)\in\red{\aug{\cA_1}\otimes\aug{\cA_2}}$. By \cite[Section 1e]{Sei}, this yields a (closed, degree $0$) $A_\infty$ natural trasformation $\widetilde{T}:=\fF'\comp T$ between $\fF'$ and $\fF'\comp\finc\comp\fpro$. In view of \eqref{idid}, $H(\widetilde{T})_{(A_1,A_2)}=\id_{\fF'(A_1,A_2)}$ is an isomorphism in $H(\cA_3)$ for every  $(A_1,A_2)\in\red{\aug{\cA_1}\otimes\aug{\cA_2}}$. It is then easy to deduce from \cite[Lemma 1.6]{Sei} that $\widetilde{T}$ yields an isomorphism in $H(\FunAc(\red{\aug{\cA_1}\otimes\aug{\cA_2}},\cA_3))$, thus proving that $\fF'\hison\fF'\comp\finc\comp\fpro$.
\end{proof}


\bigskip

{\small\noindent{\bf Acknowledgements.} First of all, we would like to warmly thank Michel Van den Bergh for some insightful conversations and for providing a key observation which we used in the proof of \autoref{dgAadj}, and Bernhard Keller for some relevant comments on a first draft of this paper. We benefitted of many useful conversations with G.\ Faonte, F.\ Genovese, V.\ Melani, M.\ Porta, A.\ Rizzardo, G.\ Tabuada and B.\ Vallette. It is our pleasure to thank them. Part of this paper was written while the second author was visiting the Erwin Schr\"odinger International Institute for Mathematics and Physics and the third author was visiting Northeastern University. Their warm hospitality is gratefully acknowledged.}


\end{document}